\newtheorem{thm}{Theorem}[section]
\newtheorem{lem}[thm]{Lemma}
\newtheorem{cor}[thm]{Corollary}
\theoremstyle{definition}
\newtheorem{mydef}[thm]{Definition}
\newcommand{\C}{\mathbb{C}}
\newcommand{\R}{\mathbb{R}}
\newcommand{\N}{\mathbb{N}}
\newcommand{\E}{\mathcal{E}}
\newcommand{\cH}{\mathcal{H}}
\newcommand{\cR}{\mathcal{R}}
\newcommand{\al}{\alpha}
\newcommand{\bb}{\beta}
\newcommand{\om}{\omega}
\newcommand{\ee}{\varepsilon}
\newcommand{\te}{\theta}
\newcommand{\tte}{\tilde{\theta}}
\newcommand{\la}{\lambda}
\newcommand{\dd}{\partial}
\newcommand{\dbar}{\bar\partial}
\newcommand{\tX}{\tilde{X}}
\newcommand{\PSH}{\text{PSH}}
\newcommand{\vol}{\text{Vol}}
\newcommand{\Poe}{P_{\om_{\ee}}}
\newcommand{\norm}[1]{\left\lVert#1\right\rVert}
\begin{document}

\title{Finite energy geodesic rays in big cohomology classes}
\author{Prakhar Gupta }
\address{Department of Mathematics, University of Maryland, College Park, Maryland, USA}
\email{pgupta8@umd.edu}

\begin{abstract}
For a big class represented by $\te$, we show that the metric space $(\E^{p}(X,\te),d_{p})$ for $p \geq 1$ is Buseman convex. This allows us to construct a chordal metric $d_{p}^{c}$ on the space of geodesic rays in $\E^{p}(X,\te)$. We also prove that the space of finite $p$-energy geodesic rays with the chordal metric $d_{p}^{c}$ is a complete geodesic metric space.

With the help of the metric $d_{p}$, we find a characterization of geodesic rays lying in $\E^{p}(X,\te)$ in terms of the corresponding test curves via the Ross-Witt Nystr\"om correspondence. This result is new even in the K\"ahler setting.

\end{abstract}

\keywords{K\"ahler Manifolds, Pluripotential Theory, Monge-Amp\`ere Measures, Finite Energy Classes}

\subjclass[2000]{Primary: 32U05; Secondary: 32Q15, 53C55}

\maketitle

\tableofcontents

\section{Introduction} \label{sec: introduction}


Finding canonical metrics on a compact K\"ahler manifold $(X,\om)$ is a problem that has guided the field of K\"ahler geometry for many decades now. To tackle this problem, \cite{mabuchiriemannianstructure}, \cite{semmesriemannianmetric}, and \cite{donaldsonriemannianmetric} constructed a Riemannian structure on 
\[
\cH_{\om} = \{ u \in C^{\infty}(X) : \om + dd^{c}u > 0\},
\]
the space of smooth K\"ahler potentials in $\om$. To find the canonical metric cohomologous to $\om$, we needed to understand the geometry of $\cH_{\om}$ endowed with this Riemannian structure. In \cite{chenspaceofkahlermetrics}, Chen proved that this Riemannian structure gives rise to a metric $d_{2}$ on $\cH_{\om}$. In \cite{darvascompletionofspaceofkahlerpotentials}, Darvas showed that the completion $\overline{(\cH_{\om}, d_{2})}$ can be identified with $(\E^{2}(X,\om), d_{2})$, the space of finite energy potentials, confirming a conjecture of Guedj \cite{guedj2014metriccompletion}. Moreover, any two points in $\E^{2}(X,\om)$ can be joined by a metric geodesic lying in $\E^{2}(X,\om)$.  Using Finsler metric structure on $\cH_{\om}$ for $p \geq 1$, in \cite{darvasgeoemtryoffiniteenergy}, Darvas constructed complete geodesic metrics $(\E^{p}(X,\om), d_{p})$. These metrics proved useful in finding K\"ahler-Einstein \cite{darvasrubinsteinpropernessconjecture}, and cscK metrics in $(X,\om)$ \cite{chenchengcsckI}, \cite{chenchengcscKII}, \cite{chenandchengcscKIII}, \cite{BDL17}.

Further properties of the metric structure of $(\E^{p}(X,\om),d_{p})$ were studied to improve our understanding of canonical metrics. In \cite[Theorem 1.5]{chenandchengcscKIII}, Chen-Cheng proved that the metric space $(\E^{p}(X,\om), d_{p})$ is Buseman convex, with the case $p=1$ being proved in \cite{BDL17}. They use the Buseman convexity property for $p = 1$, to prove that the $L^1$ geodesic stability of $(X,\om)$ is equivalent to the existence of cscK metric cohomologous to $\om$ \cite{chenandchengcscKIII}. In \cite{DarvasLuuniformconvexity}, Darvas-Lu proved a uniform convexity property for $(\E^{p}(X,\om), d_{p})$ for $p > 1$, that allowed them to prove $C^{1,\bar{1}}$-geodesic stability for the existence of cscK metric. In op. cit., the authors used Buseman convexity and uniform convexity to prove that the space of $p$-finite energy geodesic rays $\cR^{p}_{\om}$ can be endowed with a complete geodesic metric $d_{p}^{c}$. 

In the author's previous paper \cite{guptacompletegeodescimetricinbigclasses}, he showed that for a big cohomology class $\{\te\}$, the space of finite energy potentials $\E^{p}(X,\te)$ can be endowed with a complete geodesic metric $d_{p}$. Moreover, he showed that the metric space $(\E^{p}(X,\te), d_{p})$ is uniformly convex if $p > 1$. With the prospects of studying stability in the big case in mind (c.f. \cite{darvas2023nonarchimedeanmetrics}), in this paper, we will explore the space of geodesic rays in the metric space $(\E^{p}(X,\te), d_{p})$. 

In \cite{guptacompletegeodescimetricinbigclasses}, the author used potentials of analytic singularity type to approximate the minimal singularity type. He used this approximation to construct the complete geodesic metric space $(\E^{p}(X,\te), d_{p})$ for $p \geq 1$. In this paper, we show that the same approximation scheme can be used to prove various properties of the metric space $(\E^{p}(X,\te),d_{p})$. 

As a first application of the approximation scheme, we prove a Lidskii-type inequality, the analog of a well known inequality for matrices (for a survey, see \cite[Theorem 2.7]{DarvasLuRubinsteinQuantization}). This result was first proved in \cite[Theorem 5.1]{DarvasLuRubinsteinQuantization} for K\"ahler classes $\om = c_{1}(L)$ induced by an ample line bundle $L$. In \cite[Corollary 3.2]{DarvasLuuniformconvexity}, Darvas-Lu extended this result to an arbitrary K\"ahler class. In this paper, we use the approximation scheme to extend the result to arbitrary big classes. In particular, we prove
\begin{thm}\label{thm: introduction lidskii type inequality}
    If $u,v,w \in \E^{p}(X,\te)$  satisfy $u \geq v \geq w$, then
    \[
    d_{p}^{p}(v,w) \leq d_{p}^{p}(u,w) - d_{p}^{p}(u,v).
    \]
\end{thm}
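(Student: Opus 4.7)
The plan is to use the analytic-singularity approximation scheme from \cite{guptacompletegeodescimetricinbigclasses} to reduce Theorem~\ref{thm: introduction lidskii type inequality} to the Kähler case, where it was proved by Darvas--Lu in \cite[Corollary 3.2]{DarvasLuuniformconvexity}.

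Given $u \geq v \geq w$ in $\E^{p}(X,\te)$, the first step is to construct approximations $u_j, v_j, w_j \in \E^{p}(X,\te)$ whose singularity types are analytic along divisors $D_j$ that approximate the minimal singularity type $V_\te$, in such a way that (i) the ordering $u_j \geq v_j \geq w_j$ is preserved --- which should follow from monotonicity of the envelope operations used to produce the approximants --- and (ii) $d_p(u_j, u), d_p(v_j, v), d_p(w_j, w) \to 0$ as $j \to \infty$, the central convergence result of \cite{guptacompletegeodescimetricinbigclasses}.

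With the approximations in hand, one reduces to a Kähler setting (for instance by restricting to the complement of $D_j$ where $u_j, v_j, w_j$ are bounded, or by passing to a log-resolution) in which the Darvas--Lu Lidskii inequality applies directly. Translating back to the big class, this should give
\[
d_p^p(v_j, w_j) \leq d_p^p(u_j, w_j) - d_p^p(u_j, v_j)
\]
at the level of the approximations, and sending $j \to \infty$ together with the $d_p$-continuity of each term produces the inequality stated in Theorem~\ref{thm: introduction lidskii type inequality}.

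The main obstacle is ensuring that the passage between the Kähler and big settings preserves the \emph{exact} algebraic form of the inequality, rather than introducing a residual error. Monge--Ampère mass contributions supported near the divisors $D_j$ must be controlled, and since the three distances $d_p^p(u_j,v_j)$, $d_p^p(v_j,w_j)$, $d_p^p(u_j,w_j)$ all arise from the same approximating triple, these controls must be mutually compatible. I expect the key technical ingredient to be the same Monge--Ampère comparison estimates that already underpin the construction of $d_p$ on the big class in \cite{guptacompletegeodescimetricinbigclasses}, now applied uniformly across an ordered triple rather than an ordered pair.
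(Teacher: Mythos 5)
Your proposal follows essentially the same route as the paper: approximate $u \geq v \geq w$ by their projections $P_{\te}[\psi_k]$ onto analytic singularity types (which preserves the ordering), pass to a log-resolution to reduce to a setting where the Darvas--Lu inequality applies, and take limits using the very definition of $d_p$ via Equation~\eqref{eq: definition of dp for big class} together with Lemma~\ref{lem: continuity under decreasing sequences}. The only structural detail you elide is that the resolution lands in a big and \emph{nef} class $\tte$ on $\tX$ rather than a K\"ahler one, so the paper inserts one further approximation $\om_\ee = \bb + \ee\om$ before invoking \cite[Corollary 3.2]{DarvasLuuniformconvexity}; since each limit is exact by construction of the metric, the ``residual error'' you worry about does not arise.
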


Next, we extend \cite[Theorem 1.5]{chenandchengcscKIII} of Chen-Cheng to the big cohomology classes. In the K\"ahler setting, they prove a Buseman type convexity of the metric space $(\E^{p}(X, \om), d_{p})$. We prove

\begin{thm}\label{thm: introduction buseman convexity}
    Let $\te$ be a real, smooth, closed $(1,1)$-form representing a big cohomology class. If $p \geq 1$, then the metric space $(\E^{p}(X,\te), d_{p})$ is Buseman convex. This means if $u_{0}, u_{1}, v_{0}, v_{1} \in \E^{p}(X,\te)$, and $[0,1] \ni t \mapsto u_{t}$ and $[0,1]\ni t \mapsto v_{t}$ are the weak geodesics joining $u_{0}$, $u_{1}$, and $v_{0}, v_{1}$ respectively, then 
    \[
    d_{p}(u_{t}, v_{t}) \leq (1-t) d_{p}(u_{0}, v_{0}) + t d_{p}(u_{1}, v_{1}). 
    \]
\end{thm}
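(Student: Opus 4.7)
The strategy is to adapt the K\"ahler-case proof of Buseman convexity (due to \cite{BDL17} for $p=1$ and \cite{chenandchengcscKIII} for $p \ge 1$) to the big setting via the analytic-singularity approximation scheme of \cite{guptacompletegeodescimetricinbigclasses}, using Theorem \ref{thm: introduction lidskii type inequality} as the main metric-energy bridge. Fix $u_0, u_1, v_0, v_1 \in \E^{p}(X, \te)$ and $t \in [0,1]$, and let $u_t$, $v_t$ be the two weak geodesics in question.

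\medskip

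My first move would be to reduce to the ordered case $u_i \le v_i$ by the standard max/min trick: setting $U_i := \max(u_i, v_i)$ and $V_i := \min(u_i, v_i)$, the monotonicity of weak geodesics under pointwise domination of endpoints gives $V_t \le \min(u_t, v_t) \le \max(u_t, v_t) \le U_t$. Theorem \ref{thm: introduction lidskii type inequality}, applied to the ordered triples built from $U_i, u_i, v_i, V_i$ (and similarly at parameter $t$), then controls the discrepancies between $d_p(u_i,v_i)$ and $d_p(U_i,V_i)$, and between $d_p(u_t,v_t)$ and $d_p(U_t,V_t)$. This lets me replace the given data by a comparable pair of ordered endpoints, for which proving the inequality is equivalent.

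\medskip

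In the ordered case $u_i \le v_i$, I would run the argument inside the approximation scheme of \cite{guptacompletegeodescimetricinbigclasses}. Approximate each endpoint by a decreasing sequence $u_i^{\varepsilon}, v_i^{\varepsilon}$ of $\te$-psh potentials whose singularity type is analytic and less singular than $V_{\te}$. For such approximants, a log-resolution of the singularity locus transports the finite-energy theory into a K\"ahler (or semi-ample) setting, where the Chen–Cheng Buseman convexity applies verbatim to give
\[
d_p^{\varepsilon}(u_t^{\varepsilon}, v_t^{\varepsilon}) \le (1-t)\, d_p^{\varepsilon}(u_0^{\varepsilon}, v_0^{\varepsilon}) + t\, d_p^{\varepsilon}(u_1^{\varepsilon}, v_1^{\varepsilon}).
\]
Passing to the limit $\varepsilon \to 0$, I would invoke the monotone stability of weak geodesics under decreasing approximation of endpoints together with the convergence $d_p^{\varepsilon} \to d_p$ already set up in op.~cit.\ to recover the desired inequality for the original endpoints.

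\medskip

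\emph{Expected main obstacle.} The conceptual template is dictated by the K\"ahler case, so the real difficulty is technical bookkeeping: ensuring that the weak geodesics associated to the analytic-singularity approximants converge to the weak geodesics of the original endpoints in a sense strong enough for $d_p$-distances to pass to the limit, and that the max/min reduction in the first step is genuinely compatible with the big-class metric. Theorem \ref{thm: introduction lidskii type inequality} is precisely what makes this robust, since it converts monotone convergence of endpoints into quantitative convergence of $d_p$-distances and controls the error terms introduced by the envelope operations $\max$ and $\min$.
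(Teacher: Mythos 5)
Your overall skeleton (approximate by analytic singularity types, resolve, reduce to a more regular setting, pass to the limit) matches the paper's, but there are two genuine problems. The first is the opening ``max/min reduction,'' which is both unnecessary and unjustified. The K\"ahler-case Buseman convexity of Chen--Cheng is already stated for arbitrary endpoints, so no ordering reduction is needed; and as written the reduction does not work: $\min(u_i,v_i)$ need not be $\te$-psh (you would have to use $P_{\te}(u_i,v_i)$), and, more seriously, Buseman convexity is an inequality with constant exactly $1$, so replacing $(u_i,v_i)$ by $(U_i,V_i)$ requires \emph{two-sided} comparisons of $d_p(u_i,v_i)$ with $d_p(U_i,V_i)$ with no loss. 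The Lidskii inequality (Theorem~\ref{thm: introduction lidskii type inequality}) only yields one-sided estimates for ordered triples, and the errors it controls do not cancel; the claimed ``equivalence'' is asserted, not proved.

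The second and more substantial gap is in the limit passage, which is where all the work in the paper actually lies. After resolving the analytic singularities you land in a big and \emph{nef} class $\tte$, not a K\"ahler one, so Chen--Cheng does not apply ``verbatim''; you must further approximate by K\"ahler classes $\om_{\ee}=\bb+\ee\om$. The key difficulty is that the interior points $u_{t,\ee}$ of the $\om_{\ee}$-geodesic are not envelopes $\Poe(f)$ of data tied to $u_t$, so the definitional convergence $d_p^{\ee}\to d_p$ from \cite{guptacompletegeodescimetricinbigclasses} says nothing about comparing $d_p(u_t,v_t)$ with $\lim_{\ee}d_p(u_{t,\ee},v_{t,\ee})$. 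The paper bridges this with two ingredients absent from your proposal: (i) the monotone convergence $P_{\bb}(u_{t,\ee})\searrow u_t$ combined with continuity of $d_p$ under decreasing limits (Lemma~\ref{lem: projection getting closer property from Kahler to nef}), and (ii) the new contraction property $d_p(P_{\bb}(u_{t,\ee}),P_{\bb}(v_{t,\ee}))\leq d_p(u_{t,\ee},v_{t,\ee})$ for the projection between different cohomology classes (Lemma~\ref{lem: a general contraction property}), whose proof in turn needs the $C^{1,\bar 1}$ regularity of $u_{t,\ee}$. An analogous comparison (Equation~\eqref{eq: projection coming closer from analytic singularity to big}) is needed again when descending from the analytic singularity types $\psi_k$ to the big class itself. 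Without these steps the phrase ``passing to the limit'' has no content. Finally, note that the paper handles $p=1$ by a separate, softer argument using linearity and monotonicity of the Monge--Amp\`ere energy $I$ along geodesics, rather than by the approximation scheme.
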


These results allow us to study the metric geometry of the space of geodesic rays in $\E^{p}(X,\te)$ in more depth. Given $u \in \E^{p}(X,\te)$, we say the geodesic ray $[0,\infty) \ni t \mapsto u_{t} \in \E^{p}(X,\te) $  is a finite $p$-energy geodesic ray starting at $u_{0} = u$.  We denote such a ray by $\{u_{t}\}_{t} \in \cR^{p}_{u}$. We can endow $\cR^{p}_{u}$ with a metric, as follows. Given geodesic rays $\{u_{t}^{0} \}_{t}, \{u_{t}^{1}\}_{t} \in \cR^{p}_{u}$ we define the chordal distance between them by 
\begin{equation}\label{eq: chordal metric for geodesic rays}
d_{p}^{c}(\{u_{t}^{0}\}_{t}, \{u_{t}^{1}\}_{t}) = \lim_{t \to \infty} \frac{d_{p}(u_{t}^{0}, u_{t}^{1})}{t}.    
\end{equation}

Let $\te$ represent a big cohomology class. By $\cR^{p}_{u}$ we denote the space of finite $p$-energy geodesic rays emanating from $u \in \E^{p}(X,\te)$.

\begin{thm}\label{thm: Introduction finite energy geodesic ray}
 $(\cR^{p}_{u}, d_{p}^{c})$ is a complete geodesic metric space. 
\end{thm}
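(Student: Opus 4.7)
The plan is to mirror the approach of Darvas--Lu \cite{DarvasLuuniformconvexity} in the K\"ahler setting, using Theorem \ref{thm: introduction buseman convexity} as the principal geometric input, together with the completeness of $(\E^{p}(X,\te), d_{p})$ established in \cite{guptacompletegeodescimetricinbigclasses}. First I would verify that $d_{p}^{c}$ is well-defined and a metric. For any two rays $\{u_{t}^{0}\}_{t}, \{u_{t}^{1}\}_{t} \in \cR^{p}_{u}$ and any $0 < s < T$, restricting both rays to $[0,T]$ and reparametrizing them on $[0,1]$, Theorem \ref{thm: introduction buseman convexity} at parameter $s/T$ (using $u_{0}^{0} = u_{0}^{1} = u$) gives
\[
d_{p}(u_{s}^{0}, u_{s}^{1}) \leq (s/T) \cdot d_{p}(u_{T}^{0}, u_{T}^{1}),
\]
so the ratio $t \mapsto d_{p}(u_{t}^{0}, u_{t}^{1})/t$ is monotone non-decreasing. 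It is bounded above by $d_{p}(u, u_{1}^{0}) + d_{p}(u, u_{1}^{1})$ via the triangle inequality and constant-speed parametrization, so the limit in \eqref{eq: chordal metric for geodesic rays} exists and is finite. Symmetry and triangle inequality transfer from $d_{p}$; separation follows from monotonicity, since $d_{p}^{c}(u^{0}, u^{1}) = 0$ forces $d_{p}(u_{t}^{0}, u_{t}^{1}) = 0$ for every $t \geq 0$.

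For completeness, given a $d_{p}^{c}$-Cauchy sequence $\{u^{k}\}_{k} \subset \cR^{p}_{u}$, the monotonicity from the first step yields $d_{p}(u_{T}^{k}, u_{T}^{\ell}) \leq T \cdot d_{p}^{c}(u^{k}, u^{\ell})$ for each $T > 0$, so $\{u_{T}^{k}\}_{k}$ is $d_{p}$-Cauchy in $\E^{p}(X,\te)$ and converges to some $u_{T}$ by the completeness result of \cite{guptacompletegeodescimetricinbigclasses}. The uniform speed bound implies $T \mapsto u_{T}$ has constant speed; I would then upgrade this metric geodesic to a genuine weak geodesic by invoking the closedness of weak geodesics under $d_{p}$-convergence in the big setting, obtained from the envelope characterization and the approximation scheme of \cite{guptacompletegeodescimetricinbigclasses}. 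Finite $p$-energy of the limit ray follows because its common speed is finite, and $d_{p}^{c}$-convergence $u^{k} \to \{u_{T}\}_{T}$ follows by passing to the $d_{p}$-limit in the monotone ratio. For the geodesic structure, given $\{u_{t}^{0}\}, \{u_{t}^{1}\} \in \cR^{p}_{u}$ and $s \in [0,1]$, I would define $t \mapsto u_{t}^{s}$ by taking, for each fixed $t \geq 0$, the weak $d_{p}$-geodesic $[0,1] \ni s \mapsto u_{t}^{s}$ joining $u_{t}^{0}$ and $u_{t}^{1}$ in $\E^{p}(X,\te)$. Theorem \ref{thm: introduction buseman convexity} then provides the estimate needed to verify that $t \mapsto u_{t}^{s}$ has the correct linear metric growth from $u$, and to identify $s \mapsto \{u_{t}^{s}\}_{t}$ as a $d_{p}^{c}$-geodesic from $\{u_{t}^{0}\}_{t}$ to $\{u_{t}^{1}\}_{t}$.

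The main obstacle is the upgrade step in the proof of completeness: showing that the pointwise-in-$T$ limit of a $d_{p}$-Cauchy family of weak geodesic rays is itself a \emph{weak} geodesic ray (and not merely a metric geodesic) in $\E^{p}(X,\te)$. In the big setting, weak geodesics are defined via Perron envelopes of $\te$-psh functions, and taking $d_{p}$-limits of such envelopes requires both the $d_{p}$-continuity of the Monge--Amp\`ere energy and the approximation of minimal singularity type by analytic singularity types from \cite{guptacompletegeodescimetricinbigclasses}. The same subtlety reappears when verifying that the interpolated curves $t \mapsto u_{t}^{s}$ from the geodesic construction are genuine weak geodesic rays rather than merely metric geodesics in $(\E^{p}(X,\te), d_{p})$.
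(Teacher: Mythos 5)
Your verification that $d_{p}^{c}$ is a metric and your completeness argument essentially coincide with the paper's: monotonicity of $t\mapsto d_{p}(u_{t}^{0},u_{t}^{1})/t$ from Theorem~\ref{thm: introduction buseman convexity}, boundedness from the triangle inequality, and fiberwise $d_{p}$-Cauchyness for a $d_{p}^{c}$-Cauchy sequence of rays. The ``upgrade'' you flag as the main obstacle --- that the fiberwise $d_{p}$-limit of weak geodesic rays is again a weak geodesic ray --- is not really an obstacle here: it is exactly the endpoint stability statement of Lemma~\ref{lem: endpoint stability}, applied on each segment $[a,b]$, and requires no new envelope analysis.

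The genuine gap is in your construction of $d_{p}^{c}$-geodesics. Defining $u_{t}^{s}$ for each \emph{fixed} $t$ as the point at parameter $s$ on the weak geodesic joining $u_{t}^{0}$ and $u_{t}^{1}$ does not produce an element of $\cR^{p}_{u}$: there is no reason the assembled curve $t\mapsto u_{t}^{s}$ is a weak geodesic ray, i.e.\ that its restriction to $[t_{1},t_{2}]$ is the Perron envelope geodesic joining $u_{t_{1}}^{s}$ and $u_{t_{2}}^{s}$, and Buseman convexity alone only controls distances between these fibers, not the geodesic property of the transversal curve. The paper's construction is necessarily more involved: it takes the interpolant $v_{t}^{\al}$ at time $t$, joins the basepoint to $v_{t}^{\al}$ by a genuine weak geodesic segment $[0,t]\ni l\mapsto w_{l}^{\al,t}$, and proves that $w_{l}^{\al,t}$ is $d_{p}$-Cauchy as $t\to\infty$ using the uniform convexity of $(\E^{p}(X,\te),d_{p})$ (Corollary~\ref{cor: uniform convexity}); the limit is a ray by endpoint stability. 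This forces the case split your proposal omits entirely: uniform convexity is available only for $p>1$, and the case $p=1$ requires a separate approximation of the given rays by rays in $\cR^{p}_{\te}$ with $p>1$ (via Theorem~\ref{thm: approximting with minimal singularity type}), together with the Lidskii-type inequality and Lemma~\ref{lem: monotone sequence of rays converge} to pass to the limit. Without uniform convexity or this approximation, your argument does not produce a candidate geodesic in $\cR^{p}_{u}$ at all.
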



\subsection{Ross-Witt Nystr\"om correspondence} \label{subsec: ross witt nystrom in introduction}
 The notion of test curves was introduced in \cite{RossWittNystromanalytictestconfigurations} by Ross-Witt Nystr\"om in the K\"ahler setting, giving a potential theoretic framework to study stability.

 Test curves are dual to (sub)geodesic rays via the Legendre transform. Ross-Witt Nystr\"oms's work generalized the work of Phong-Sturm \cite{phongsturmtestconfigurations} where they associated geodesic rays to test configurations. The study of test curves was extended to the big setting in \cite{darvas2021l1} by Darvas-Di Nezza-Lu. Furthermore, the notion of finite energy test curves was introduced in \cite{darvasxiaclosureoftestconfigurations} (in the K\"ahler setting) and \cite{darvas2023twisted} (in the big setting). Test curves can detect several properties of the geodesic rays. Using the metric $d_{p}$ on $\E^{p}(X,\te)$, the following theorem proves that the geodesic rays lying in $\E^{p}(X,\te)$ can be detected from the corresponding test curve.

If $(0,\infty) \ni t \mapsto u_{t} \in \PSH(X,\te)$  is a $\te$-geodesic ray starting at $V_{\te}$, then its Legendre dual $ \R \ni \tau \mapsto \hat{u}_{\tau} \in \PSH(X,\te)$ is defined by 
\[
\hat{u}_{\tau} = \inf_{t > 0} (u_{t} - t\tau).
\]

For $\tau$ large enough, $\hat{u}_{\tau} \equiv -\infty$. Thus we define, 
\[
\tau_{\hat{u}}^{+} := \inf \{ \tau \in \R \, : \, \hat{u}_{\tau} \equiv -\infty\} < \infty. 
\]

 \begin{thm}\label{thm: ross witt nystrom in introduction}
     Let $\{u_{t}\}_{t}$ be a $\te$-geodesic ray starting from $V_{\te}$. Let $\{\hat{u}_{\tau}\}_{\tau}$ be the Legendre dual of $\{u_{t}\}_{t}$. Then $u_{t} \in \E^{p}(X, \te)$ for all $t \geq 0$ iff 
    \begin{equation}\label{eq: p energy of test curve in intro}
             \int_{-\infty}^{\tau^{+}_{\hat{u}}}(-\tau + \tau^{+}_{\hat{u}})^{p-1}\left(\int_{X}\te^{n}_{V_{\te}} - \int_{X}\te^{n}_{\hat{u}_{\tau}} \right) d\tau < \infty
    \end{equation}

 \end{thm}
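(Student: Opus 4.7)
The plan is to prove an identity of the form
\[
\text{(a $p$-energy quantity for $u_t$)} \ \asymp\ C_p\, t^p \int_{-\infty}^{\tau^+_{\hat u}}(\tau^+_{\hat u} - \tau)^{p-1} \bigl(\textstyle\int_X \te^n_{V_\te} - \int_X \te^n_{\hat u_\tau}\bigr)\, d\tau,
\]
where the left-hand side is finite for all $t$ iff $u_t \in \E^p(X,\te)$ for all $t \geq 0$, and then to isolate the right-hand integral as the $t \to \infty$ invariant. The approach adapts the K\"ahler strategy of \cite{darvasxiaclosureoftestconfigurations} and uses crucially the analytic-singularities approximation scheme of \cite{guptacompletegeodescimetricinbigclasses} to reduce computations in the big class $\{\te\}$ to objects we already understand.

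First, I would work at the approximation level. Let $\te_{\ee}$ denote the approximating forms from \cite{guptacompletegeodescimetricinbigclasses} with associated minimal-singularity potentials $V_{\te_\ee}$ of analytic type. The ray $\{u_t\}$ induces an approximating family $\{u_{t,\ee}\}$ with Legendre dual test curves $\{\hat u_{\tau,\ee}\}$. For each $\ee$, working essentially on a K\"ahler class on a modification, the corresponding identity is proved via the layer-cake decomposition of $|V_{\te_\ee} - u_{t,\ee}|^p$, the substitution $s = t\tau$, and the Legendre identification of the super-level sets of $u_{t,\ee}$ in terms of $\hat u_{\tau,\ee}$ together with the comparison between the $p$-energy functional and $d_p^p(V_{\te_\ee}, u_{t,\ee})$ available in the K\"ahler setting.

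Second, I would pass to the limit $\ee \to 0$. Continuity of the non-pluripolar Monge-Amp\`ere mass along the approximation (a central feature of the scheme in \cite{guptacompletegeodescimetricinbigclasses}), together with monotone and dominated convergence on the $\tau$-integral, propagates the identity. Since $\{u_t\}$ is a $\te$-geodesic ray, the left-hand side scales as $t^p$ (with both sides allowed to be $+\infty$ simultaneously), so taking $t \to \infty$ isolates the integral in \eqref{eq: p energy of test curve in intro} as the essential invariant; finiteness of one side is then equivalent to finiteness of the other. Combined with Buseman convexity (Theorem \ref{thm: introduction buseman convexity}), which lets us interpolate $\E^p$-membership along the ray, this yields the stated equivalence.

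The main obstacle will be the behaviour of the integrand near the upper endpoint $\tau = \tau^+_{\hat u}$, where $\hat u_\tau \to -\infty$ and $\int_X \te^n_{\hat u_\tau}$ may degenerate abruptly. Under the approximation one must show $\tau^+_{\hat u_\ee} \to \tau^+_{\hat u}$ and that the mass deficit $\int_X \te_\ee^n_{V_{\te_\ee}} - \int_X \te_\ee^n_{\hat u_{\tau,\ee}}$ is semicontinuous in the correct sense; the weight $(\tau^+_{\hat u} - \tau)^{p-1}$ concentrates integrability issues there, especially when $p$ is close to $1$. The Lidskii-type inequality (Theorem \ref{thm: introduction lidskii type inequality}), the concavity of $\tau \mapsto \hat u_\tau$, and the monotonicity of the mass function are the ingredients I would exploit to obtain the uniform control needed to close the argument.
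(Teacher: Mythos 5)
Your core idea --- a layer-cake computation identifying a $p$-energy quantity of the ray with the mass-deficit integral of the test curve via Legendre duality --- is the same engine that drives the paper's proof, but two of your steps have genuine gaps. First, the detour through an $\ee$-approximation of the class is both unnecessary and problematic: the paper proves the key identity
\[
\int_{X}|\dot{u}_{0}|^{p}\te^{n}_{V_{\te}} \;=\; p\int_{-\infty}^{0}(-\tau)^{p-1}\Bigl(\int_{X}\te^{n}_{V_{\te}} - \int_{X}\te^{n}_{\hat{u}_{\tau}}\Bigr)d\tau
\]
(after normalizing $\sup_{X}u_{t}=0$) \emph{directly in the big class}, by layer-caking $|\dot{u}_{0}|^{p}$ against $\te^{n}_{V_{\te}}$, using the exact set identity $\{\dot{u}_{0}\ge\tau\}=\{\hat{u}_{\tau}=V_{\te}\}$ (convexity of $t\mapsto u_{t}$ makes the difference quotients monotone) and the contact-set formula $\te^{n}_{\hat{u}_{\tau}}=\mathds{1}_{\{\hat{u}_{\tau}=V_{\te}\}}\te^{n}_{V_{\te}}$ for maximal test curves (Lemma~\ref{lem: measure on contact set}), which already holds for arbitrary big classes. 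Your plan instead layer-cakes $|V_{\te}-u_{t}|^{p}$ against $\te^{n}_{u_{t}}$; for fixed $t$ the sublevel sets $\{u_{t}-V_{\te}<t\tau\}$ are in general strictly contained in $\{\hat{u}_{\tau}<V_{\te}\}$ and the measure is the wrong one, so you only obtain a comparison, and the $\ee\to0$ step then requires convergence of the non-pluripolar masses $\int_{X}\te^{n}_{\hat{u}_{\tau,\ee}}$ and of $\tau^{+}_{\hat{u}_{\ee}}$ --- exactly the semicontinuity problem you flag as ``the main obstacle'' without resolving it. Non-pluripolar masses are not continuous along such approximations in general, so this is not a removable technicality.

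Second, and more seriously, the direction ``integral \eqref{eq: p energy of test curve in intro} finite $\Rightarrow u_{t}\in\E^{p}(X,\te)$'' cannot be closed by comparing the $p$-energy with $d_{p}^{p}(V_{\te},u_{t})$: that comparison presupposes $u_{t}\in\E^{p}(X,\te)$ (in particular that $u_{t}$ has full mass), which is precisely what is to be proved, so your argument is circular there. The paper supplies the missing ingredient as Lemma~\ref{lem: Finite geodesic speed implies finite energy potential}: given $\int_{X}|\dot{u}_{0}|^{p}\te^{n}_{V_{\te}}<\infty$, one approximates $u_{t}$ from above by potentials in $\cH_{\te}$, bounds their $d_{p}$-distance to $V_{\te}$ uniformly by the initial speed, and invokes the compactness of $d_{p}$-bounded decreasing sequences (Lemma~\ref{lem: decreasing bounded dp sequence has a limit in Ep}) to conclude membership in $\E^{p}(X,\te)$. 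This is the one place where the approximation machinery genuinely enters, and it is absent from your outline. Buseman convexity, which you invoke to ``interpolate membership along the ray,'' is not needed: the reparametrization $s\mapsto u_{ts}$ rescales the speed by $t^{p}$ and handles all times at once.
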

When $\tau^{+}_{\hat{u}} = 0$, the expression in Equation~\eqref{eq: p energy of test curve in intro} equals $\frac{1}{p}d_{p}^{p} (V_{\te}, u_{1}) = \frac{1}{p}\int_{X}|\dot{u}_{0}|^{p}\te^{n}_{V_{\te}}$. Thus this expression is closely related to the speed of the geodesic ray in $\E^{p}(X,\te)$. When $p = 1$, such characterization of finite energy geodesic rays was obtained in \cite[Theorem 3.7]{darvasxiaclosureoftestconfigurations} (in the K\"ahler setting) and \cite[Theorem 3.9]{darvas2023twisted} (in the big setting). For $p > 1$, this is new in the K\"ahler case as well. 

\subsection*{Organization} In Section~\ref{sec: preliminaries}, we recall some background material and in particular the approximation scheme that is used in the construction of the metric $d_{p}$ on $\E^{p}(X,\te)$. In Section~\ref{sec: Lidskii type inequality}, we prove Theorem~\ref{thm: introduction lidskii type inequality}.  In Section~\ref{sec: convexity}, we prove Theorem~\ref{thm: introduction buseman convexity}. In Section~\ref{sec: space of geodesic rays}, we prove Theorem~\ref{thm: Introduction finite energy geodesic ray}. Lastly, in Section~\ref{sec: Ross-Witt Nystrom correspondence}, we prove theorem~\ref{thm: ross witt nystrom in introduction}.

\subsection*{Acknowledgement}I want to thank my advisor Tam\'as Darvas for his constant support and valuable suggestions on an early draft that improved the paper.  I am also grateful to Mingchen Xia and Antonio Trusiani for carefully reading the preliminary version of this paper and suggesting helpful improvements. 

Research is partially supported by NSF CAREER grant DMS-1846942.

\section{Preliminaries} \label{sec: preliminaries}

\subsection{Pluripotential theory}\label{sec: facts of pluripotential theory}
In this section, we will recall the notions from the pluripotential theory developed in \cite{Boucksom2008MongeAmpreEI}. Throughout, we will work on a compact K\"ahler manifold $(X,\om)$. 

Let $\te$ be a smooth closed real $(1,1)$-from representing the cohomology class $\{\te\} \in H^{1,1}(X,\R)$. A function $u : X \to \R\cup \{-\infty\}$ is called a $\te$-psh function if locally $u + g$ is plurisubharmonic, where $\te = dd^{c}g$. By $dd^{c}$ here we mean $\sqrt{-1}\dd \dbar/2\pi$. We use $\PSH(X,\te)$ to denote the set of all $\te$-psh functions. If $u$ is $\te$-psh, then the $(1,1)$-current $\te_{u} := \te + dd^{c}u$ is a closed positive current. 

We say $\te$ represents a pseudoeffective class if $\PSH(X,\te) \neq \emptyset$. We say $\te$ represents a big cohomology class if there exists $u \in \PSH(X,\te)$ such that $\te_{u} \geq \ee \om$ for some small enough $\ee > 0$. We say that $\te$ represents a K\"ahler class if there is $u \in \PSH(X,\te) \cap C^{\infty}(X)$ such that $\te_{u} \geq \ee \om$ for some $\ee > 0$. We say that $\te$ represents a nef cohomology class $\{\te + \ee \om\}$ is a K\"ahler class.  In a K\"ahler class, there are plenty of smooth potentials, but in an arbitrary big class, we do not expect any smooth potentials.

Let $\te$ represent a big cohomology class. If $u, v \in \PSH(X,\te)$ we say that $u$ is more singular than $v$, denoted by $u \preceq v$ if $u \leq v + C$ for some constant $C$. We say that $u, v \in \PSH(X,\te)$ have the same singularity type if $u \preceq v$ and $v \preceq u$. The potential
\begin{equation}\label{eq: def of V_theta}
V_{\te} = \sup\{ u \in \PSH(X,\te) : u \leq 0\}    
\end{equation}

has the least singularity among all $\te$-psh functions. Any potential with the same singularity type as $V_{\te}$ is called a minimal singularity potential. 

In \cite{Boucksom2008MongeAmpreEI}, the authors introduced a non-pluripolar measure $\te^{n}_{u}:= (\te + dd^{c}u)^{n}$ associated to any $\te$-psh function $u$. Witt Nystr\"om proved \cite{Wittnystrommonotonicity} proved that if $u \preceq v$, then $\int_{X} \te^{n}_{u} \leq \int_{X} \te^{n}_{v}$. Thus for any $u \in \PSH(X,\te)$, $\int_{X} \te^{n}_{u} \leq \int_{X}\te^{n}_{V_{\te}} := \vol(\te)$. 

We denote the potentials of full mass by $\E(X,\te)$. In particular
\[
\E(X,\te) := \left\{ u \in \PSH(X,\te) \, : \, \int_{X} \te^{n}_{u} = \int_{X}\te^{n}_{V_{\te}}\right\}.
\]

For $p \geq 1$, we say that $u \in \E(X,\te)$ has finite $p$-energy if $\int_{X}|u-V_{\te}|^{p}\te^{n}_{u} < \infty$. We denote
\[
\E^{p}(X,\te) := \left\{ u \in \E(X,\te) \,: \,  \int_{X} |u-V_{\te}|^{p}\te^{n}_{u} < \infty\right\}.
\]

We can do the same construction in the prescribed singularity setting as well. See \cite{darvas2023relative} for more details. If $\phi$ is a model potential, meaning $P_{\te}[\phi](0) = \phi$ (see Equation~\eqref{eq: envelope with arbitrary singularity type}), then we define the space of relative full mass as
\[
\E(X,\te, \phi) = \left\{ u \in \PSH(X,\te) \, : \, u \preceq \phi, \int_{X}\te^{n}_{u} = \int_{X} \te^{n}_{\phi}\right\}
\]
and the space of relatively finite $p$-energy as 
\[
\E^{p}(X,\te ,\phi) = \left\{ u \in \E(X,\te,\phi) : \int_{X} |u-\phi|^{p} \te^{n}_{\phi} < \infty \right\}.
\]

\subsection{Geodesic rays}\label{sec: geodesic rays preliminaries}
Following Berndtsson \cite{Berndtssonweakgeodesics} and Darvas-Di Nezza-Lu \cite{Darvas2017OnTS} we define the geodesic segments by an envelope construction. Let $S \subset \C$ be a vertical strip given by $S = \{ z \in \C \, | \, 0 \leq \text{Re}(z) \leq 1\}$. Let $\pi : S \times X \to X$ be the projection. Let $u_{0}, u_{1} \in \PSH(X,\te)$, then we say that a path $(0,1) \ni t \mapsto \PSH(X,\te)$ is a subgeodesic between $u_{0}$ and $u_{1}$ if the map $S \times X \ni (z,x) \mapsto V(z,x) := v_{\text{Re}(z)}(x)$ is $\pi^{*}\te$-psh and $\lim_{t \to 0,1} v_{t} \leq u_{0,1}$. Let $\mathcal{S}(u_{0},u_{1})$ to be the collection of all subgeodesics between $u_{0}$ and $u_{1}$. We define the geodesic joining $u_{0}$ and $u_{1}$ to be a path $(0,1) \ni t \mapsto u_{t} \in \PSH(X,\te)$ given by 
\begin{equation}\label{eq: def of geodesic}
u_{t}(x) = \sup_{\mathcal{S}(u_{0}, u_{1})} v_{t}(x)    
\end{equation}

In the K\"ahler setting, the geodesics joining ``smooth'' points have some regularity properties. If $\eta$ represents a K\"ahler class (and not necessarily be a K\"ahler form), then we denote 
\[
\cH_{\eta}^{1,\bar{1}} = \PSH(X,\eta) \cap C^{1,\bar{1}}(X)
\]
which will act as the space of ``smooth'' potentials for us.

We recall that in \cite{HespaceofKahlerpotentials}, He proved 
\begin{lem}\label{lem: C11bar regularity for Kahler case}
    If $\om$ is a K\"ahler form and $u_{0}, u_{1} \in \cH_{\om}^{1,\bar{1}}$, then the geodesic $u_{t}$ joining $u_{0}$ and $u_{1}$ is also in $\cH^{1,\bar{1}}_{\om}$. 
\end{lem}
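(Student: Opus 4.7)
The plan is to follow Chen's classical approach, refined in \cite{HespaceofKahlerpotentials} to handle $C^{1,\bar{1}}$ boundary data. Identify the strip $S$ with a cylinder $A \subset \C^{*}$ via $z \mapsto e^{z}$; the usual correspondence between $S^{1}$-invariant $\pi^{*}\om$-psh functions on $A \times X$ and subgeodesics then converts the envelope definition \eqref{eq: def of geodesic} into the statement that $U(z,x) := u_{\mathrm{Re}(z)}(x)$ is the Perron envelope, and hence the unique bounded weak solution, of the homogeneous complex Monge-Amp\`ere equation
\[
(\pi^{*}\om + dd^{c} U)^{n+1} = 0 \quad \text{on } A \times X, \qquad U|_{\partial A \times X} = u_{0},\, u_{1}.
\]

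To access regularity I would replace this by the non-degenerate Dirichlet problems
\[
(\pi^{*}\om + dd^{c} U_{\ee})^{n+1} = \ee\, \beta^{n+1}
\]
for $\ee > 0$, where $\beta$ is a fixed K\"ahler form on $A \times X$, keeping the same boundary values (after first smoothing $u_{0},u_{1}$). By Caffarelli-Kohn-Nirenberg-Spruck each $U_{\ee}$ is smooth, and by comparison $U_{\ee} \searrow U$ as $\ee \to 0$. The task reduces to producing a uniform $C^{1,\bar{1}}$ bound on $U_{\ee}$ that depends only on $(X,\om)$ and on the $C^{1,\bar{1}}$ norms of $u_{0}, u_{1}$.

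The $C^{0}$ estimate is immediate from the comparison principle, with $\max(u_{0},u_{1})$ and a suitable constant-shifted $\pi^{*}\om$-psh function providing barriers. For the $C^{1}$ estimate, the $t$-derivative is controlled by the maximum principle applied to $\partial_{t} U_{\ee}$, using that the boundary values are $t$-independent in the cylinder picture, and the $X$-derivative is controlled by a standard Bernstein-type argument on $|\nabla U_{\ee}|^{2}$. The decisive, and main, obstacle is the mixed complex Hessian estimate $\mathrm{tr}_{\om}(\om + dd_{x}^{c} U_{\ee}) \leq C$ uniformly in $\ee$. This is exactly where $C^{1,\bar{1}}$ enters in place of $C^{1,1}$: the maximum principle applied to the quantity $\log \mathrm{tr}_{\om}(\om + dd^{c} U_{\ee}) - A\, U_{\ee}$ only requires control of the $dd^{c}$ (complex Hessian) of the boundary data, not of the full real Hessian, once a correct barrier built from a local K\"ahler potential on $X$ combined with the flat cylinder direction is used to absorb the boundary terms and the bisectional curvature contribution of $\om$.

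Once the uniform $C^{1,\bar{1}}$ estimate is in hand, it passes to the monotone decreasing limit $U$, so each slice $u_{t} = U|_{\{\mathrm{Re}(z)=t\}\times X}$ is of class $C^{1,\bar{1}}$. Since $u_{t} \in \PSH(X,\om)$ by construction, this gives $u_{t} \in \cH_{\om}^{1,\bar{1}}$ for every $t \in [0,1]$, as claimed.
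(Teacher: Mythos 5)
The paper offers no proof of this lemma: it is quoted directly from He's work \cite{HespaceofKahlerpotentials}, so the only meaningful comparison is with the cited argument, and your sketch does follow that same route (reduction to a Dirichlet problem for the homogeneous complex Monge--Amp\`ere equation on the cylinder, elliptic regularization, uniform a priori estimates, monotone limit). Two places where your outline hides the actual work, though. First, the quantity to which the maximum principle is applied cannot be the full $(n+1)$-dimensional trace $\mathrm{tr}(\pi^{*}\om+dd^{c}U_{\ee})$: its boundary values involve $\partial_{t}^{2}U_{\ee}$ and the mixed normal--tangential derivatives, which are \emph{not} determined by $u_{0},u_{1}$, and estimating them is precisely the hard boundary step in Chen's original proof. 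He's contribution is to run the Yau-type computation for the \emph{partial} trace $\mathrm{tr}_{\om}(\om+dd^{c}_{x}U_{\ee})$ in the $X$-directions only, whose boundary values are controlled by $\Delta_{\om}u_{0},\Delta_{\om}u_{1}$ alone; verifying that the differential inequality still closes for this partial trace (the fiber direction contributes terms of an unfavorable sign that must be absorbed) is the technical heart of the proof, and your sketch conflates the two traces. Second, the preliminary smoothing of $u_{0},u_{1}$ must produce smooth $\om$-psh approximants with \emph{uniformly} bounded complex Hessians, since the final estimate is claimed to depend only on the $C^{1,\bar1}$ norms of the data; on a general compact K\"ahler manifold this requires Demailly-type regularization or an equivalent device and is not automatic from local convolution. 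Neither issue is fatal --- both are resolved in the cited reference --- but as written your proof defers exactly the steps that make the statement nontrivial. Note also that the separate gradient estimate you invoke is dispensable: once $0\le\om+dd^{c}_{x}U_{\ee}$ and $\mathrm{tr}_{\om}(\om+dd^{c}_{x}U_{\ee})\le C$ are known, the $C^{1,\bar1}$ bound on each slice follows immediately.
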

We can modify He's result to prove
\begin{lem}\label{lem: C11bar regularity for eta}
    If $\eta$ represents a K\"ahler class and $u_{0},u_{1} \in \cH^{1,\bar{1}}_{\eta}$, then the geodesic $u_{t}$ joining them is in $\cH^{1,\bar{1}}_{\eta}$ as well.
\end{lem}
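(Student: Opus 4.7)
The plan is to reduce to He's theorem (Lemma~\ref{lem: C11bar regularity for Kahler case}) by shifting $\eta$ to a genuine Kähler form via a smooth potential. Since $\eta$ represents a Kähler class, by definition there exists $\rho \in \PSH(X,\eta) \cap C^{\infty}(X)$ such that $\om := \eta + dd^{c}\rho$ is a Kähler form. The proof then splits into two short steps: transfer the problem from $\eta$ to $\om$, apply He's result, and transfer back.

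First I would set $\tu_{i} := u_{i} - \rho$ for $i = 0,1$. Because $\om + dd^{c}\tu_{i} = \eta + dd^{c}u_{i} \geq 0$ and $\tu_{i} = u_{i} - \rho \in C^{1,\bar{1}}(X)$ (difference of a $C^{1,\bar{1}}$ and a smooth function), we have $\tu_{0},\tu_{1} \in \cH^{1,\bar{1}}_{\om}$. By Lemma~\ref{lem: C11bar regularity for Kahler case}, the $\om$-geodesic $[0,1] \ni t \mapsto \tu_{t}$ joining $\tu_{0}$ and $\tu_{1}$ lies in $\cH^{1,\bar{1}}_{\om}$.

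Next I would check the shift-compatibility of the envelope in \eqref{eq: def of geodesic}. For any path $(0,1) \ni t \mapsto v_{t}$, consider $V(z,x) = v_{\mathrm{Re}(z)}(x)$ on $S \times X$; then $V$ is $\pi^{*}\eta$-psh if and only if $V - \pi^{*}\rho$ is $\pi^{*}\om$-psh, and the boundary conditions $\lim_{t\to 0,1} v_{t} \leq u_{0,1}$ translate verbatim into $\lim_{t\to 0,1}(v_{t}-\rho) \leq u_{0,1}-\rho = \tu_{0,1}$. Hence the map $v_{t} \mapsto v_{t} - \rho$ is a bijection between $\mathcal{S}(u_{0},u_{1})$ (with respect to $\eta$) and $\mathcal{S}(\tu_{0},\tu_{1})$ (with respect to $\om$). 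Taking suprema, the $\eta$-geodesic between $u_{0}$ and $u_{1}$ is exactly
\[
u_{t} = \tu_{t} + \rho.
\]
Since $\tu_{t} \in C^{1,\bar{1}}(X)$ and $\rho \in C^{\infty}(X)$, we conclude $u_{t} \in C^{1,\bar{1}}(X)$, and $\eta + dd^{c}u_{t} = \om + dd^{c}\tu_{t} \geq 0$ gives $u_{t} \in \PSH(X,\eta)$; therefore $u_{t} \in \cH^{1,\bar{1}}_{\eta}$.

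There is no real obstacle here; the only subtlety is ensuring that the envelope definition of the geodesic is preserved under adding a smooth potential, which follows immediately from the fact that $\pi^{*}\rho$ is a global smooth function on $S \times X$ and so shifts $\pi^{*}\eta$-pshness to $\pi^{*}\om$-pshness. The reduction works precisely because the Kähler-class hypothesis supplies a smooth $\rho$ with $\eta + dd^{c}\rho$ strictly positive, which is exactly what is needed to invoke Lemma~\ref{lem: C11bar regularity for Kahler case}.
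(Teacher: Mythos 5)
Your proof is correct and follows essentially the same route as the paper: pick a smooth potential $\rho$ (the paper calls it $g$) making $\eta+dd^{c}\rho$ Kähler, translate the endpoints, apply Lemma~\ref{lem: C11bar regularity for Kahler case}, and translate back. The only difference is that you spell out the verification that the envelope definition of the geodesic commutes with the shift by $\rho$, which the paper leaves implicit.
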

\begin{proof}
    Let $g \in C^{\infty}(X)$ such that $\eta_{g}:= \eta + dd^{c}g$ is K\"ahler. Then $u_{0} - g, u_{1}- g \in C^{1,\bar{1}}(X)$ are $\eta_{g}$-psh. Lemma~\ref{lem: C11bar regularity for Kahler case} says that the goedesic $u_{t,g}$ joining them is in $C^{1,\bar{1}}(X)$. Since $u_{t,g} + g$ is the $\eta$-geodesic joining $u_{0}$ and $u_{1}$, we get that $u_{t} = u_{t,g} + g \in C^{1,\bar{1}}(X)$ as well.
\end{proof}

Now we define geodesic rays. Again we assume that $\te$ represents a big cohomology class. A path $(0,\infty) \ni t \mapsto v_{t} \in \PSH(X,\te)$ is a \emph{sublinear subgeodesic ray} starting at $u_{0} \in \PSH(X,\te)$ if $u_{t} \to u_{0}$ in $L^{1}(X)$ as $ t \to 0$ and for any $0 < a < b < \infty$, the path $(a,b) \ni t \mapsto u_{t}$ is a subgoedesic between $u_{a}$ and $u_{b}$ and $u_{t} \leq u_{0} + Ct$ for some constant $C$. 

A sublinear subgeodesic ray $(0,\infty) \ni t \mapsto u_{t} \in \PSH(X,\te)$ is a \emph{geodesic ray} starting at $u_{0} \in \PSH(X,\te)$ if $u_{t} \to u_{0}$ in $L^{1}(X)$ as $t \to 0$ and for any $0 < a < b < \infty$, the path $(a, b) \ni t \mapsto u_{t} $ is a geodesic ray joining $u_{a}$ and $u_{b}$. We recall a few useful results about the geodesic rays.

\begin{lem}[{\cite[Remark 3.3]{darvas2023twisted}}]\label{lem: sup is affine in weak geodesics starting from V_{te}}
    If $[0,\infty) \ni t \mapsto u_{t} \in \E^{p}(X,\te)$ is a geodesic ray starting from $u_{0} = V_{\te}$, then $t \mapsto \sup_{X}u_{t} = \sup_{\text{Amp}(\te)}(u_{t} - V_{\te})$ is linear. 
\end{lem}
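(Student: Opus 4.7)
The plan is to show that $f(t) := \sup_X u_t$ is convex as a function of $t$ by pluripotential theory, then to sandwich it with a matching linear upper bound on each finite segment via a subgeodesic comparison; linearity on $[0, \infty)$ then follows since the resulting slope $f(T)/T$ must be independent of $T$.

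First I would verify convexity. Setting $U(z,x) := u_{\text{Re}(z)}(x)$ on the vertical strip $\{0 < \text{Re}(z) < T\} \times X$, the definition of geodesic ray makes $U$ a $\pi^*\te$-plurisubharmonic function; since $\pi^*\te$ has no $dz \wedge d\bar z$ component and $U$ is independent of $\text{Im}(z)$, each slice $t \mapsto U(t,x)$ is convex in $t$, and taking the supremum in $x$ preserves convexity. Moreover, because $V_\te = \sup^*\{u \in \PSH(X,\te) : u \leq 0\}$ we have $\sup_X V_\te = 0$, so $f(0) = 0$. For the identity $\sup_X u_t = \sup_{\text{Amp}(\te)}(u_t - V_\te)$: on the one hand $V_\te \leq 0$ gives $u_t - V_\te \geq u_t$, so $\sup_{\text{Amp}(\te)}(u_t - V_\te) \geq \sup_X u_t$ (using that the complement of $\text{Amp}(\te)$ is pluripolar, so the essential supremum of $u_t$ is attained on $\text{Amp}(\te)$); on the other hand $u_t - f(t)$ is a non-positive $\te$-psh function, hence dominated by $V_\te$ by the definition of the latter, giving the reverse inequality.

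For the linearity, fix $T > 0$; on $[0,T]$, $u_t$ is the geodesic from $V_\te$ to $u_T$, realized as the supremum of subgeodesics in $\mathcal{S}(V_\te, u_T)$. Convexity with $f(0) = 0$ already gives the upper bound $f(t) \leq (t/T) f(T)$. For the matching lower bound, I would construct an explicit subgeodesic $\phi \in \mathcal{S}(V_\te, u_T)$ whose supremum realizes the linear interpolation $(t/T) f(T)$. My candidate is of the form $\phi_t := \max\bigl(V_\te + a\,t,\ u_T - b(T-t)\bigr)$ for slopes $a, b \geq 0$ to be chosen, so that $\phi$ is $\pi^*\te$-psh on the strip as a max of two subgeodesics linear in $t$; then $a,b$ are tuned against the minimal-singularity bound $u_T \leq V_\te + C$ and the value $f(T)$ so that $\phi_0 \leq V_\te$, $\phi_T \leq u_T$, and $\sup_X \phi_t = (t/T) f(T)$. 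Once this subgeodesic is in hand, $u_t \geq \phi_t$ gives $f(t) \geq (t/T) f(T)$, and combining the two bounds yields $f(t) = (t/T) f(T)$ on $[0, T]$. Since $T$ is arbitrary, $f(T)/T$ must be constant in $T$, so $f$ is linear on $[0,\infty)$.

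The hard part will be producing this subgeodesic $\phi$ with the sharp sup-values, since one must simultaneously arrange both endpoint conditions and the linear sup-identity. This is where the model-potential structure $V_\te = P_\te[0]$ becomes essential, as it lets the construction absorb the singularities of $u_T$ while preserving the required boundary inequalities; it is also where the specificity of the starting point $V_\te$ (as opposed to an arbitrary potential in $\E^p(X,\te)$) plays a role, and where one would expect to invoke approximation by less singular potentials in the spirit of the scheme recalled in Section~\ref{sec: preliminaries}.
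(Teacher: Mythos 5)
The paper does not actually prove this lemma --- it is imported wholesale from \cite[Remark 3.3]{darvas2023twisted} --- so there is no internal proof to compare against; I can only assess your argument on its own terms. Your first half (convexity of $f(t)=\sup_X u_t$ via slice-subharmonicity, $f(0)=\sup_X V_\te=0$, and the identity $\sup_X u_t=\sup_{\text{Amp}(\te)}(u_t-V_\te)$ via ``$u_t-f(t)\le 0$ is $\te$-psh hence $\le V_\te$'') is correct and standard. The gap is in the lower bound. Your candidate subgeodesic $\phi_t=\max\bigl(V_\te+at,\,u_T-b(T-t)\bigr)$ must satisfy $\phi_T\le u_T$, i.e.\ $V_\te+aT\le u_T$, and this is impossible for \emph{any} real $a$ unless $u_T$ has minimal singularity type: a general $u_T\in\E^p(X,\te)$ satisfies only $u_T\preceq V_\te$ with $\inf_{\text{Amp}(\te)}(u_T-V_\te)=-\infty$. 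So the construction, as stated, proves linearity only for rays of minimal singularity type. (In that restricted case it does work: taking $b=f(T)/T$ and $a\le\inf(u_T-V_\te)/T$ gives $\sup_X\phi_t=tf(T)/T$ automatically, so the ``simultaneous tuning'' you worry about is not the real difficulty --- the singularity of $u_T$ is.)

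Your closing paragraph correctly senses that something must be done about the singularities, but the proposed mechanism (``the model-potential structure $V_\te=P_\te[0]$ absorbs the singularities of $u_T$'') is not an argument, and the approximation you gesture at is left entirely unexecuted. The missing step is precisely Theorem~\ref{thm: approximting with minimal singularity type}: approximate the ray by minimal-singularity rays $u_t^j\searrow u_t$, apply your max-construction to each $u_t^j$ to get $\sup_X u_t^j=c_jt$, and then pass to the limit using that for a decreasing sequence of usc functions on a compact space one has $\sup_X u_t^j\searrow\sup_X u_t$ (a Dini/Hartogs-type argument), which forces $c_j\to c$ and $\sup_X u_t=ct$. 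Without this reduction the proof covers only the minimal-singularity case, which excludes most of $\E^p(X,\te)$. An alternative route, closer to \cite{darvas2023twisted}, is via the Ross--Witt Nystr\"om correspondence: writing $u_t=\sup_\tau(\hat u_\tau+t\tau)$ for the maximal test curve $\{\hat u_\tau\}_\tau$ and using $\sup_X\hat u_\tau=0$ for $\tau<\tau^+_{\hat u}$ gives $\sup_X u_t=t\,\tau^+_{\hat u}$ directly, with no approximation needed.
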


\begin{thm}[{\cite[Proposition 3.8]{darvas2023twisted}}]\label{thm: approximting with minimal singularity type}
    If $[0, \infty) \ni t \mapsto u_{t} \in \E^{p}(X,\te)$ is a geodesic ray starting at $u_{0} = V_{\te}$, then one can find geodesic rays of minimal singularity type $[0,\infty) \ni t \mapsto u_{t}^{j} \in \E^{p}(X,\te)$ such that each $u_{t}^{j}$ has minimal singularity and $u_{t}^{j} \searrow u_{t}$ as $j \to \infty$ for all $t \geq 0$. 
\end{thm}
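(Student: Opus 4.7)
The plan is to realize each $u_t^j$ as a weak geodesic ray obtained as an envelope beneath a ceiling that decreases to $u_t$. For each integer $j \ge 1$, set
\[
\phi^j_t := \max\bigl(u_t,\ V_{\te} - jt\bigr).
\]
The pointwise maximum of two $\te$-subgeodesic rays is again a $\te$-subgeodesic ray, so $\phi^j_t$ is a sublinear $\te$-subgeodesic ray starting at $V_{\te}$; furthermore $\phi^{j+1}_t \le \phi^j_t$ and $\phi^j_t \searrow u_t$ as $j \to \infty$, since $V_\te - jt \to -\infty$ for $t > 0$. The key feature of this choice is that $\phi^j_t \ge V_{\te} - jt$ forces minimal singularity in every time slice, while the ceiling still collapses to the original ray.

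Next, I would define $u_t^j$ as the upper envelope of all $\te$-subgeodesic rays $\{v_s\}_{s \ge 0}$ starting at $V_{\te}$ and satisfying $v_s \le \phi^j_s$ for all $s \ge 0$. Equivalently, for each $T > 0$ let $u_{t,T}^j$ on $[0, T]$ be the weak geodesic segment from $V_{\te}$ to $\phi^j_T$ and set $u_t^j := \lim_{T \to \infty} u_{t,T}^j$, the limit being monotone in $T$. This should produce a $\te$-geodesic ray $u_t^j$ starting at $V_{\te}$. Since $V_{\te} - jt$ is itself a valid competitor, $u_t^j \ge V_{\te} - jt$; together with $u_t^j \le \phi^j_t \le V_{\te} + Ct$ (using sublinearity $\sup_X u_t \le Ct$), this sandwiches $u_t^j$ between minimal singularity potentials for each fixed $t$, whence $u_t^j$ has minimal singularity and in particular lies in $\E^p(X,\te)$.

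For the decreasing convergence, shrinking of the competitor class as $\phi^{j+1}_t \le \phi^j_t$ gives $u_t^{j+1} \le u_t^j$; meanwhile $\{u_t\}$ is itself a competitor (since $u_t \le \phi^j_t$ trivially), yielding $u_t \le u_t^j \le \phi^j_t \searrow u_t$, hence $u_t^j \searrow u_t$ pointwise as $j \to \infty$.

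The main obstacle is verifying that the envelope in the second step is really a geodesic \emph{ray}, not merely a subgeodesic ray, and that it starts at $V_{\te}$ in the $L^1$ sense. This reduces to showing that the monotone limit $\lim_{T \to \infty} u_{t,T}^j$ of finite-interval geodesics preserves the geodesic equation on each sub-interval $[0, T_0]$, which hinges on the sublinear growth $\sup_X \phi^j_T \le CT$ (guaranteed by the sublinearity of the original ray and the affine shape of $V_{\te} - jT$) and on stability of weak geodesic segments under decreasing limits of their boundary data. Given these, the minimal singularity and finite $p$-energy conclusions are automatic from the sandwich estimates above.
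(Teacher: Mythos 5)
The paper does not prove this statement at all: it is imported verbatim from \cite[Proposition 3.8]{darvas2023twisted}, so your argument has to stand on its own. As written it does not, for two related reasons.

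First, your two definitions of $u_t^j$ are not equivalent, and the first one fails outright. The upper envelope of all $\te$-subgeodesic rays starting at $V_{\te}$ and lying below $\phi^j_t=\max(u_t,V_{\te}-jt)$ is exactly $\phi^j_t$ itself, because $\phi^j$ is one of the competitors. But a maximum of two geodesic rays is in general only a \emph{sub}geodesic ray, so this envelope is not a geodesic ray and the construction stops there. Second, the limit-of-segments definition produces something on the wrong side of your sandwich: since $\phi^j$ restricted to $[0,T]$ is a subgeodesic with the boundary values $V_{\te}$ and $\phi^j_T$, the weak geodesic segment $u^j_{t,T}$ joining $V_{\te}$ to $\phi^j_T$ satisfies $u^j_{t,T}\ge \phi^j_t$ (and is increasing, not decreasing, in $T$). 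Hence the limit ray satisfies $u^j_t\ge \phi^j_t$, and your key inequality $u_t\le u^j_t\le \phi^j_t\searrow u_t$ is false; the minimal-singularity and monotonicity-in-$j$ claims survive, but the proof that $u^j_t\searrow u_t$ collapses.

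That last convergence is the real content of the theorem, and it is not a formal interchange of limits: one has $\inf_j\sup_T u^j_{t,T}\ge \sup_T\inf_j u^j_{t,T}=u_t$, and the reverse inequality is exactly what must be proved. In Legendre-transform language, your $\phi^j$ is the inverse Legendre transform of the test curve obtained from $\hat u_\tau$ by replacing it with $V_{\te}$ for $\tau\le -j$; the ray $u^j$ is then the minimal geodesic ray dominating $\phi^j$, whose test curve is $P_{\te}[\hat\phi^j_\tau](0)$, and showing that these decrease back to $\hat u_\tau$ uses the maximality $P_{\te}[\hat u_\tau](0)=\hat u_\tau$ of the dual test curve together with a continuity property of the envelope operator under decreasing limits. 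This is precisely the machinery of \cite[Section 3]{darvas2023twisted}, and it is the missing step in your argument; I would either supply it or simply cite the reference as the paper does.
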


\subsection{Plurisubharmonic envelopes}\label{sec: plurisubharmonic envelopes}

The envelope construction has several applications in pluripotential theory. We have already seen two examples of such construction in Equations~\ref{eq: def of V_theta} and~\ref{eq: def of geodesic}. In this section we will see more such examples are recall some theorems about them. 

Assume that $\te$ is a smooth closed real $(1,1)$-from that represents a big cohomology class. Given a measurable function $f :  X \to \R\cup \{\pm \infty\}$, we define
\begin{equation}\label{eq: envelope with minimal singularity type}
P_{\te}(f) := (\sup\{ u \in \PSH(X,\te) \,: \, u \leq f\})^{*},
\end{equation}
where $\varphi^{*}$ is the upper semicontinuous regularization of $\varphi$. $P_{\te}(f) \equiv - \infty$ if there is no $\te$-psh function such that $u \leq f$. 

If $\phi \in \PSH(X,\te)$, then the envelope with respect to the singularity type of $\phi$ is constructed by 
\begin{equation}\label{eq: envelope with arbitrary singularity type}
P_{\te}[\phi](f) := (\sup\{ u \in \PSH(X,\te) \, : \, u \preceq \phi, u \leq f\})^{*} = \left(\lim_{C \to \infty} P_{\te}(\phi + C, f)\right)^{*}.    
\end{equation}

In the K\"ahler setting, we have the following regularity result.

\begin{lem}\label{lem: envelope for eta}
    If $\eta$ represents a K\"ahler class (and not necessarily be a K\"ahler form) and $f \in C^{1,\bar{1}}(X)$, then $P_{\eta}(f) \in C^{1,\bar{1}}(X)$. 
\end{lem}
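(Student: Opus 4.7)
The plan is to reduce the claim to the case where the reference form is actually a Kähler form, where the $C^{1,\bar 1}$-regularity of the envelope $P_\omega(f)$ for $f \in C^{1,\bar 1}(X)$ is known (this is the analogue for $C^{1,\bar 1}$ obstacles of Berman's classical envelope regularity result, and is used in the same way as Lemma~\ref{lem: C11bar regularity for Kahler case} was invoked above). Since $\eta$ represents a Kähler class, there exists $g \in C^\infty(X)$ such that
\[
\omega := \eta + dd^c g
\]
is a Kähler form on $X$.

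The key observation is that translation by $g$ gives a bijection
\[
\PSH(X,\eta) \longrightarrow \PSH(X,\omega), \qquad u \longmapsto u - g,
\]
because $\eta + dd^c u = \omega + dd^c(u - g)$. Under this bijection, the constraint $u \leq f$ becomes $u - g \leq f - g$. Taking the supremum over the respective candidate sets and the upper semicontinuous regularization yields
\[
P_\eta(f) - g = P_\omega(f - g),
\]
where neither side is identically $-\infty$ because $\omega$ is Kähler (so plenty of bounded $\omega$-psh functions lying below $f - g$ exist).

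Since $g \in C^\infty(X) \subset C^{1,\bar 1}(X)$ and $f \in C^{1,\bar 1}(X)$, the obstacle $f - g$ lies in $C^{1,\bar 1}(X)$. Applying the Kähler-form case of the envelope regularity result gives $P_\omega(f - g) \in C^{1,\bar 1}(X)$, and therefore
\[
P_\eta(f) = P_\omega(f - g) + g \in C^{1,\bar 1}(X),
\]
as desired. The only nontrivial input is the Kähler-form version of the statement; once that is available, the argument is a purely formal translation, so there is no substantial obstacle beyond citing the correct reference for the Kähler case.
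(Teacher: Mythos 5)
Your proposal is correct and follows essentially the same route as the paper: translate by a smooth potential $g$ so that $\eta + dd^c g$ is a Kähler form, use the identity $P_\eta(f) = P_{\eta + dd^c g}(f-g) + g$, and invoke the known $C^{1,\bar 1}$-regularity of envelopes with $C^{1,\bar 1}$ obstacles in the Kähler-form case (Berman's result). The only difference is that you spell out the translation bijection that the paper leaves as "we can see that."
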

\begin{proof}
    Let $g \in C^{\infty}(X)$ such that $\eta_{g} = \eta + dd^{c}g$ is a K\"ahler form. We can see that $P_{\eta}(f) = P_{\eta_{g}}(f-g) + g$.  Now due to \cite{BermanMAequationzerotemplimit} (for a survey see \cite[Appendix A.3]{Darvas2019GeometricPT}), $P_{\eta_{g}}(f-g) \in C^{1,\bar{1}}(X)$ as $f-g \in C^{1,\bar{1}}(X)$. Therefore, $P_{\eta_{g}}(f-g) + g \in C^{1,\bar{1}}(X)$. Thus $P_{\eta}(f) \in C^{1,\bar{1}}(X)$. 
\end{proof}
In general, we have the following formula for the non-pluripolar measures of potentials obtained by taking an envelope :
\begin{lem}[{\cite{EleonoraTrapaniMAmeasureoncontactsets}}]\label{lem: measure on contact set} If $\te$ represents a big cohomology class, $\phi \in \PSH(X,\te)$, and $f \in C^{1,\bar{1}}(X)$, then 
\[
\te^{n}_{P_{\te}[\phi](f)} = \mathds{1}_{\{ P_{\te}[\phi](f) = f\}} \te^{n}_{f}.
\]
    
\end{lem}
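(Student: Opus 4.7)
The plan is to prove the identity by combining the plurifine locality of the non-pluripolar Monge--Amp\`ere product with the approximation scheme of Equation~\eqref{eq: envelope with arbitrary singularity type}. Write $u := P_{\te}[\phi](f)$ and $K := \{u = f\}$; the goal is $\te^n_u = \mathds{1}_K \te^n_f$.

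The identification of the measure on $K$ is the easy step. Since $f \in C^{1,\bar 1}(X)$, the current $\te + dd^c f$ has $L^\infty$ coefficients, so $\te^n_f$ is well-defined as an absolutely continuous measure. The non-pluripolar Monge--Amp\`ere product is local in the plurifine topology, and on the plurifine open set $K$ the functions $u$ and $f$ agree, giving $\mathds{1}_K \te^n_u = \mathds{1}_K \te^n_f$ immediately.

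The core of the proof is to show $\te^n_u$ is concentrated on $K$. I would approximate $u$ by $u_M := P_{\te}(\min(\phi + M, f))$, which increases to $u$ as $M \to \infty$ by Equation~\eqref{eq: envelope with arbitrary singularity type}. Applying Berman's orthogonality principle \cite{BermanMAequationzerotemplimit} (together with the $C^{1,\bar 1}$-envelope regularity of Lemma~\ref{lem: envelope for eta}) locally on the plurifine open pieces $\{\phi + M > f\}$ and $\{\phi + M < f\}$ yields the decomposition
\[
\te^n_{u_M} = \mathds{1}_{\{u_M = f,\, f \leq \phi + M\}} \te^n_f + \mathds{1}_{\{u_M = \phi + M,\, \phi + M \leq f\}} \te^n_\phi,
\]
where I used $\te^n_{\phi + M} = \te^n_\phi$. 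Since $u \preceq \phi$ by construction, there is a constant $C'$ with $u \leq \phi + C'$ on $\{\phi > -\infty\}$. For $M > C'$, the identity $u_M = \phi + M$ at a point with $\phi > -\infty$ would force $u_M > u$, contradicting $u_M \leq u$. Hence the second piece is supported on the pluripolar set $\{\phi = -\infty\}$, which the non-pluripolar measure $\te^n_\phi$ ignores, so $\te^n_{u_M} = \mathds{1}_{\{u_M = f\}} \te^n_f$ for all $M > C'$.

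Passing to the limit $M \to \infty$, the increasing sequence $u_M \nearrow u$ of potentials with the same singularity type as $\phi$ gives weak convergence $\te^n_{u_M} \to \te^n_u$ by Witt Nystr\"om's monotonicity theorem \cite{Wittnystrommonotonicity}, while $\bigcup_M \{u_M = f\}$ exhausts $K$ up to a $\te^n_f$-negligible set, so $\mathds{1}_{\{u_M = f\}} \te^n_f \to \mathds{1}_K \te^n_f$. Combining these yields $\te^n_u = \mathds{1}_K \te^n_f$. The main technical obstacle is justifying the local orthogonality decomposition used above, since the obstacle $\min(\phi + M, f)$ fails to be $C^{1,\bar 1}$ globally; the standard workaround is to regularize $\min$ by the smooth approximation $(a,b) \mapsto -\ee \log(e^{-a/\ee} + e^{-b/\ee})$, apply Berman's formula in the smooth case, and take $\ee \to 0$ via a second monotone limit.
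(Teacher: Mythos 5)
The paper does not prove this lemma at all --- it is imported verbatim from Di Nezza--Trapani \cite{EleonoraTrapaniMAmeasureoncontactsets} --- so the comparison is really between your argument and the content of that reference. Your overall architecture (approximate by $u_M = P_{\te}(\min(\phi+M,f))$, show the part of the contact set lying on the $\phi+M$ branch carries no mass once $M$ exceeds the constant in $u \preceq \phi$, then pass to the increasing limit) matches the strategy one actually has to follow, and the observation that $\{u_M = \phi+M\}\cap\{\phi>-\infty\}$ is empty for large $M$ is correct and well used. But the two steps you treat as routine are precisely where the theorem lives, and as written they fail. The ``easy step'' is not easy and is in fact false as a general principle: the contact set $K=\{u=f\}$ is \emph{not} plurifine open --- since $u$ is usc, $u\le f$ and $f$ is continuous, $K=\{u-f\ge 0\}$ is closed, hence plurifine closed --- and plurifine locality says nothing about it. Indeed, for $\te$-psh $u\le v$ the identity $\mathds{1}_{\{u=v\}}\te^n_u=\mathds{1}_{\{u=v\}}\te^n_v$ is simply wrong in general (take $v=\max(\log|z|,0)$ and $u=0$ locally: the left side vanishes on $\{u=v\}=\{|z|\le 1\}$ while the right side does not). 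The correct route to $\mathds{1}_K\te^n_u=\mathds{1}_K\te^n_f$ is the a.e.\ equality of second derivatives of two $C^{1,\bar1}$ functions along the set where one touches the other, which requires $C^{1,\bar1}$ regularity of the envelope itself --- exactly what is unavailable for $P_{\te}[\phi](f)$ in a general big class, where it is not even bounded.

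Second, the displayed decomposition of $\te^n_{u_M}$ is asserted rather than proved. Berman's theorem \cite{BermanMAequationzerotemplimit} concerns envelopes of smooth (or $C^{1,\bar1}$) obstacles in K\"ahler classes; it does not ``apply locally'' to $P_{\te}(\min(\phi+M,f))$, and your proposed repair --- smooth the $\min$ and ``apply Berman's formula in the smooth case'' --- does not close the gap because one branch of the obstacle is $\phi+M$, which is merely psh, so the regularized obstacle is never $C^{1,\bar1}$. This decomposition is the main theorem of the cited reference, not a corollary of Berman. Two smaller but genuine problems in the limit step: $\bigcup_M\{u_M=f\}$ need not exhaust $K$ up to a $\te^n_f$-null set ($u_M(x)$ can increase strictly to $f(x)$ without ever reaching it), so your argument only yields $\te^n_u\le\mathds{1}_K\te^n_f$ and you must get the reverse inequality from the (flawed) first step or from a mass count; and the weak convergence $\te^n_{u_M}\to\te^n_u$ is a continuity statement for the non-pluripolar product along increasing sequences (Darvas--Di Nezza--Lu), not a consequence of Witt Nystr\"om's monotonicity of total masses.
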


\subsection{The complete geodesic metric $d_{p}$.}
If $\om$ is a K\"ahler metric, a complete geodesic metric $d_{p}$ on $\E^{p}(X,\om)$ was constructed by Darvas in \cite{darvasgeoemtryoffiniteenergy}. If $\bb$ represents a big and nef cohomology class, then by approximating from the K\"ahler case, Di Nezza-Lu constructed such a complete geodesic metric $d_{p}$ on $\E^{p}(X,\bb)$ in \cite{dinezza2018lp}. If $\te$ represents a big cohomology class, then in \cite{guptacompletegeodescimetricinbigclasses}, the author constructed an approximation scheme via analytic singularity types to construct a complete geodesic metric $d_{p}$ on $\E^{p}(X,\te)$. In this subsection, we will describe the approximation method used to construct $d_{p}$ on big and nef, and big classes.

\subsubsection{$d_{p}$ metric in big and nef classes} \label{sec: dp metric in big and nef}
Let $\bb$ represent a big and nef cohomology class. Here we explain the construction of the complete geodesic metric $d_{p}$ on $\E^{p}(X,\bb)$ by Di Nezza-Lu in \cite{dinezza2018lp}. Let $\om$ be a K\"ahler form.  Denote by $\om_{\ee}:= \bb + \ee \om$, a smooth form which represents a K\"ahler class. There is a complete geodesic metric $d_{p}$ on $\E^{p}(X,\om_{\ee})$. Let
\[
\cH_{\bb} = \{ u \in \PSH(X,\bb)  :  u = P_{\bb}(f) \text{ where } f \in C^{1,\bar{1}}(X)\}.
\]
$\cH_{\bb}$ is the analog of the space of smooth potentials from the K\"ahler case, and it was first used in \cite{dinezza2018lp}. If $u_{0}, u_{1} \in \cH_{\bb}$, then there are $f_{0}, f_{1} \in C^{1,\bar{1}}(X)$ such that $u_{0} = P_{\bb}(f_{0})$ and $u_{1} = P_{\bb}(f_{1})$. We define $u_{0,\ee} = \Poe(f_{0})$ and $u_{1,\ee} = \Poe(f_{1})$. We define 
\[
d_{p}(u_{0},u_{1}) = \lim_{\ee \to 0} d_{p}(u_{0,\ee}, u_{1,\ee}).
\]
For $u_{0}, u_{1} \in \E^{p}(X,\bb)$, we can find $u_{0}^{j}, u_{1}^{j} \in \cH_{\bb}$ such that $u_{0}^{j} \searrow u_{0}$ and $u_{1}^{j} \searrow u_{1}$. We define 
\[
d_{p}(u_{0}, u_{1}) = \lim_{j\to \infty}d_{p}(u_{0}^{j}, u_{1}^{j}).
\]

\subsubsection{$d_{p}$ metric in the analytic singularity setting} \label{sec: dp metric in analytic singularity}
Let $\psi$ be a $\te$-psh function with analytic singularities. By Hironaka's resolution, we can find a modification $\mu: \tX \to X$ that resolves the singularities of $\psi$. See \cite[Section 3]{guptacompletegeodescimetricinbigclasses} for more details. There is a smooth closed real $(1,1)$-form $\tte$ and a bounded $\tte$-psh function $g$ on $\tX$ such that the map 
\[
\PSH(X,\te,\psi) \ni u \mapsto \tilde{u} := (u-\psi)\circ \mu + g \in \PSH(\tX, \tte)
\]
is an order-preserving bijection. Moreover, this mapping is a bijection between $\E^{p}(X,\te,\psi)$ and $\E^{p}(\tX, \tte)$ as well. Since $\tte$ represents a big and nef class, there is a complete geodesic metric $d_{p}$ on $\E^{p}(\tX, \tte)$. Using this correspondence, we can define the metric on $\E^{p}(X,\te,\psi)$ as follows. Let $u_{0}, u_{1} \in \E^{p}(X,\te, \psi)$, then 
\begin{equation}\label{eq: equation for dp in the new paper}
    d_{p}(u_{0}, u_{1}) := d_{p}(\tilde{u}_{0}, \tilde{u}_{1}). 
\end{equation}

\subsubsection{$d_{p}$ metric in big classes}\label{sec: dp metric in big}
Let $\te$ represent a big cohomology class. By Demailly's regularization, we can find an increasing sequence $\psi_{k}$ of $\te$-psh functions with analytic singularities such that $\psi_{k} \nearrow V_{\te}$.

By \cite[Theorem 7.4]{guptacompletegeodescimetricinbigclasses}, we can define the complete geodesic metric $d_{p}$ on $\E^{p}(X,\te)$ by the following. If $u_{0}, u_{1} \in \E^{p}(X,\te)$, then
\begin{equation} \label{eq: definition of dp for big class}
d_{p}(u_{0}, u_{1}) = \lim_{k \to \infty} d_{p}(P_{\te}[\psi_{k}](u_{0}), P_{\te}[\psi_{k}](u_{1})).    
\end{equation}

\subsubsection{Properties of the metric $d_{p}$}

Let $\te$ represent a big cohomology class. We denote by 
\[
\cH_{\te} = \{ u \in \PSH(X,\te) : u = P_{\te}(f) \text{ for some } f \in C^{1,\bar{1}}(X)\}.
\]
Notice that if $\te$ represents a K\"ahler class, then due to Lemma~\ref{lem: envelope for eta} $\cH_{\te}$ consists of $C^{1,\bar{1}}$ potentials. 

\begin{thm}[ {\cite[Theorem 4.7]{guptacompletegeodescimetricinbigclasses}}] \label{thm: geodesic speed and distance}
    Let $u_{0}, u_{1} \in \cH_{\te}$, and let $u_{t}$ be the weak geodesic joining $u_{0}$ and $u_{1}$. Then 
    \[
    d_{p}^{p}(u_{0},u_{1}) = \int_{X}|\dot{u}_{0}|^{p}\te^{n}_{u_{0}} = \int_{X}|\dot{u}_{1}|^{p}\te^{n}_{u_{1}}.
    \]
\end{thm}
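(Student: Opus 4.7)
The plan is to mirror the four-stage construction of $d_{p}$ recalled in Sections~\ref{sec: dp metric in big and nef}--\ref{sec: dp metric in big}: start from the Kähler case where the Finsler formula is classical, push it through the big-and-nef approximation $\om_{\ee} = \bb + \ee\om$, transport it to the analytic singularity setting via a Hironaka resolution, and finally reach an arbitrary big class through Demailly's regularization.

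If $\eta$ represents a Kähler class and $f_{0}, f_{1} \in C^{1,\bar 1}(X)$, then Lemma~\ref{lem: envelope for eta} gives $u_{i} := P_{\eta}(f_{i}) \in C^{1,\bar 1}(X)$ and Lemma~\ref{lem: C11bar regularity for eta} gives the same regularity for the geodesic joining them; on this class of potentials the identity $d_{p}^{p}(u_{0},u_{1}) = \int_{X} |\dot u_{0}|^{p} \eta^{n}_{u_{0}}$ is the classical Darvas Finsler-length computation. Next, for $\bb$ big and nef and $u_{i} = P_{\bb}(f_{i}) \in \cH_{\bb}$, I would apply this to $u_{i,\ee} := P_{\om_{\ee}}(f_{i})$ and take $\ee \to 0$, using the definition of $d_{p}$ on $\E^{p}(X,\bb)$ from Section~\ref{sec: dp metric in big and nef}. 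Monotone convergence $u_{i,\ee} \searrow u_{i}$ yields convergence of the geodesics and of $\dot u_{0,\ee}$, while Lemma~\ref{lem: measure on contact set} rewrites both $(\om_{\ee})^{n}_{u_{0,\ee}}$ and $\bb^{n}_{u_{0}}$ as indicators of contact sets against the fixed measures $(\om_{\ee})^{n}_{f_{0}}$ and $\bb^{n}_{f_{0}}$, reducing the limit passage to dominated convergence.

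For $\te$ big and a $\te$-psh model potential $\psi$ with analytic singularities, the isometric bijection $u \mapsto \tilde{u}$ of Section~\ref{sec: dp metric in analytic singularity} transports the big-and-nef case from $(\tX, \tte)$ back to pairs in $\cH_{\te}$ with singularity type $\psi$, identifying geodesics, their time derivatives, and non-pluripolar Monge-Amp\`ere measures on the two sides. Finally, for arbitrary big $\te$ and $u_{0}, u_{1} \in \cH_{\te}$, I would use Demailly's regularization $\psi_{k} \nearrow V_{\te}$ and apply the analytic-singularity case to $u_{i}^{(k)} := P_{\te}[\psi_{k}](u_{i})$. The left-hand side converges to $d_{p}^{p}(u_{0}, u_{1})$ by~\eqref{eq: definition of dp for big class}, and it remains to show
\[
\int_{X} |\dot u_{0}^{(k)}|^{p} \, \te^{n}_{u_{0}^{(k)}} \longrightarrow \int_{X} |\dot u_{0}|^{p} \, \te^{n}_{u_{0}}.
\]

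The main obstacle will be this last limit. One needs simultaneous control of the monotone convergence $\dot u_{0}^{(k)} \nearrow \dot u_{0}$ of the geodesic speeds and of $\te^{n}_{u_{0}^{(k)}} \to \te^{n}_{u_{0}}$ for the non-pluripolar measures, whose total masses increase to $\vol(\te)$ by Witt Nystr\"om's monotonicity. The decisive tool is again Lemma~\ref{lem: measure on contact set}: it presents every measure in sight as the indicator of a contact set with the same fixed $C^{1,\bar 1}$ datum $f_{0}$ (where $u_{0} = P_{\te}(f_{0})$) times a common background measure, so the convergence reduces to a monotone/dominated convergence argument against a fixed finite measure.
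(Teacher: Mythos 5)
This statement is not proved in the paper at all: it is quoted, with citation, from \cite[Theorem 4.7]{guptacompletegeodescimetricinbigclasses} and used here as a black box. So there is no internal proof to compare your proposal against. That said, your plan retraces exactly the four-stage approximation by which $d_{p}$ is \emph{defined} (K\"ahler $\to$ big and nef $\to$ analytic singularities $\to$ big), which is certainly the intended route and is the same scheme this paper uses for its own results (Sections~\ref{sec: Lidskii type inequality} and~\ref{sec: convexity}); the choice of Lemma~\ref{lem: measure on contact set} to put all the Monge--Amp\`ere measures on a common contact set against the fixed measure $\te^{n}_{f_{0}}$ is also the right mechanism for the limit passages.

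Two steps in your outline are genuinely substantive rather than routine, and you should not present them as bookkeeping. First, at the analytic-singularity stage you need the projected potentials $P_{\te}[\psi_{k}](f_{i})$ to land, under the bijection $u \mapsto (u-\psi_{k})\circ\mu + g$, in the class $\cH_{\tte}$ on $\tX$ where the big-and-nef formula has been established; the na\"ive candidate datum $(f_{i}-\psi_{k})\circ\mu + g$ is unbounded along the exceptional locus, so this identification requires an argument (this is where the cited paper does real work). Second, the convergence $\int_{X}|\dot u_{0}^{(k)}|^{p}\te^{n}_{u_{0}^{(k)}} \to \int_{X}|\dot u_{0}|^{p}\te^{n}_{u_{0}}$ needs (i) that the geodesics $u_{t}^{(k)}$ increase a.e.\ to $u_{t}$, which requires knowing that $t\mapsto P_{\te}[\psi_{k}](u_{t})$ is a subgeodesic so as to squeeze $u_{t}^{(k)}$ from below, (ii) that increasing convergence of the $t$-convex functions $t\mapsto u_{t}^{(k)}$ upgrades to a.e.\ convergence of the one-sided derivatives $\dot u_{0}^{(k)}$, and (iii) that the contact sets $\{u_{0}^{(k)}=f_{0}\}$ exhaust $\{u_{0}=f_{0}\}$ up to a $\te^{n}_{f_{0}}$-null set, which follows from $\int_{X}\te^{n}_{\psi_{k}}\to\vol(\te)$ rather than from set-theoretic monotonicity alone. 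With the uniform bound $|\dot u_{0}^{(k)}|\leq \|u_{0}^{(k)}-u_{1}^{(k)}\|_{L^{\infty}}$ these points do close the argument, but each needs to be written out; as it stands your proposal is a correct skeleton with the hard joints left implicit.
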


\begin{lem}[Pythagorean identity, {\cite[Theorem 5.5]{guptacompletegeodescimetricinbigclasses}}] \label{lem: Pythagorean identity}
\sloppy
The metric $d_{p}$ satisfies the following Pythagorean identity. If $u_{0}, u_{1} \in \E^{p}(X,\te)$, then
\[
d_{p}^{p}(u_{0}, u_{1}) = d_{p}^{p}(u_{0}, P_{\te}(u_{0},u_{1})) + d_{p}^{p}(u_{1}, P_{\te}(u_{0},u_{1})).
\]
\end{lem}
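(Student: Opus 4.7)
The plan is to propagate the identity through the four-stage approximation scheme of Section~\ref{sec: preliminaries}: first the smooth K\"ahler case, then the big-and-nef case by K\"ahler perturbation, then the relative analytic-singularity case via Hironaka resolution, and finally the general big class via Demailly regularization.

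For the K\"ahler base case, fix $u_0, u_1 \in \cH_\te$ with $\te$ representing a K\"ahler class, and set $P := P_\te(u_0, u_1)$. Lemma~\ref{lem: envelope for eta} (extended slightly to rooftop envelopes of $C^{1,\bar{1}}$ data) puts $u_0, u_1, P$ all in $\cH_\te$, and Lemma~\ref{lem: measure on contact set} decomposes the Monge-Amp\`ere mass of $P$ as $\te^n_P = \mathds{1}_{\{P=u_0\}}\te^n_{u_0} + \mathds{1}_{\{P=u_1\}\setminus\{P=u_0\}}\te^n_{u_1}$. Combining the geodesic speed formula (Theorem~\ref{thm: geodesic speed and distance}) with the Darvas identity $d_p^p(u, v) = \int_X (u-v)^p \te^n_v$ for ordered smooth potentials $v \leq u$, I would compute
\[
d_p^p(u_0, P) + d_p^p(u_1, P) = \int_{\{P=u_1\}\setminus\{P=u_0\}} (u_0 - u_1)^p \te^n_{u_1} + \int_{\{P=u_0\}} (u_1 - u_0)^p \te^n_{u_0},
\]
and establish by a parallel endpoint-speed analysis of the geodesic joining $u_0$ to $u_1$ that $d_p^p(u_0, u_1)$ produces the same right-hand side under the same contact-set split.

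For the propagation steps: the big-and-nef case follows by sending $\ee \to 0$ in $\om_\ee = \bb + \ee\om$ and using continuity of the rooftop envelope under this perturbation compatibly with the limit definition in Section~\ref{sec: dp metric in big and nef}. The relative analytic-singularity case follows immediately from Section~\ref{sec: dp metric in analytic singularity}, because the order-preserving bijection $u \mapsto \tilde u$ carries rooftop envelopes to rooftop envelopes and preserves $d_p$ by~\eqref{eq: equation for dp in the new paper}. For the final step with $u_0, u_1 \in \E^p(X,\te)$ arbitrary, I apply~\eqref{eq: definition of dp for big class} with the Demailly regularization $\psi_k \nearrow V_\te$ to both sides of the identity.

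The main obstacle sits in this last limit. One needs
\[
\lim_{k \to \infty} d_p\bigl(P_\te[\psi_k](P_\te(u_0, u_1)),\, P_\te(P_\te[\psi_k](u_0), P_\te[\psi_k](u_1))\bigr) = 0,
\]
i.e.\ an asymptotic commutation between the relative envelope $P_\te[\psi_k]$ and the rooftop envelope $P_\te(\cdot,\cdot)$. These two potentials are ordered one-sidedly and both increase monotonically to $P_\te(u_0, u_1)$ as $k \to \infty$; I expect to squeeze them in $d_p$ using Theorem~\ref{thm: introduction lidskii type inequality} (the Lidskii-type inequality) applied to the resulting sandwich, combined with monotone convergence of the Monge-Amp\`ere energies along $\psi_k$.
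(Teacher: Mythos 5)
This lemma is not proved in the paper at all: it is recalled verbatim from \cite[Theorem 5.5]{guptacompletegeodescimetricinbigclasses}, so there is no in-paper argument to compare against. Your overall architecture (K\"ahler $\to$ big and nef $\to$ analytic singularities $\to$ big, riding the same approximation scheme that defines $d_p$) is the natural one and is essentially how the cited reference proceeds. However, two of your steps are problematic as written.

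First, the base case rests on a false identity. For ordered potentials $v\le u$ one does \emph{not} have $d_p^p(u,v)=\int_X(u-v)^p\te^n_v$; already for $p=1$ and a K\"ahler form $\om$, $d_1(u,v)=E(u)-E(v)=\frac{1}{n+1}\sum_{j}\int_X(u-v)\om_u^j\wedge\om_v^{n-j}$, which differs from $\int_X(u-v)\om_v^n$ by gradient terms $\int d(u-v)\wedge d^c(u-v)\wedge(\cdots)$ that do not vanish in general. The only exact formula available is the geodesic-speed identity $d_p^p(u_0,u_1)=\int_X|\dot u_0|^p\te^n_{u_0}$ of Theorem~\ref{thm: geodesic speed and distance}, and $\dot u_0\ne u_1-u_0$; Darvas's K\"ahler-case proof works by splitting $\int|\dot u_0|^p\te^n_{u_0}$ over $\{\dot u_0\le 0\}$ and $\{\dot u_0>0\}$ and identifying each piece with the speed of the geodesic to the rooftop envelope, which is precisely the content you defer to a ``parallel endpoint-speed analysis.'' The honest fix is to take the K\"ahler Pythagorean formula as a citation (it is a theorem of Darvas) rather than recompute it, since the genuine content of the big-class statement is the propagation through the limits.

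Second, the step you flag as the ``main obstacle'' is not an obstacle: the two envelopes commute exactly, not just asymptotically. A $\te$-psh function $v$ satisfies $v\le P_{\te}(u_0,u_1)$ iff $v\le u_0$ and $v\le u_1$; combined with $v\preceq\psi_k$ this shows that every competitor in the envelope defining $P_{\te}[\psi_k](P_{\te}(u_0,u_1))$ is a competitor for $P_{\te}(P_{\te}[\psi_k](u_0),P_{\te}[\psi_k](u_1))$ and conversely (for the converse one uses that $P_{\te}[\psi_k](u_i)\preceq\psi_k$, which holds since $\psi_k$ has analytic singularities), so the two potentials coincide. Hence the final limit is immediate from Equation~\eqref{eq: definition of dp for big class} and no Lidskii-type squeeze is needed. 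That is just as well, because invoking Theorem~\ref{thm: introduction lidskii type inequality} here would be uncomfortable: the Lidskii inequality in this paper is established on top of the already-constructed metric $d_p$, whose construction in \cite{guptacompletegeodescimetricinbigclasses} uses the Pythagorean identity as a tool, so leaning on it to reprove the identity risks circularity at the level of the overall development.
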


\begin{lem}[{\cite[Lemma 5.2]{guptacompletegeodescimetricinbigclasses}}]\label{lem: continuity under decreasing sequences}
    If $u_{0}^{j}, u_{1}^{j} , u_{0}, u_{1} \in \E^{p}(X,\te)$ satisfy $u_{0}^{j} \searrow u_{0}$ and $u_{1}^{j} \searrow u_{1}$, then 
    \[
    \lim_{j \to \infty}d_{p}(u_{0}^{j}, u_{1}^{j}) = d_{p}(u_{0}, u_{1}).
    \]
\end{lem}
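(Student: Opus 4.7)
The strategy is to reduce to a monotone case via the triangle inequality, then use the approximation scheme of Section~\ref{sec: dp metric in big} to transfer the question to the analytic singularity setting, where, via resolution of singularities, it becomes a continuity statement in a big and nef class. From the triangle inequality,
\[
\bigl|d_p(u_0^j, u_1^j) - d_p(u_0, u_1)\bigr| \leq d_p(u_0^j, u_0) + d_p(u_1^j, u_1),
\]
so it suffices to show that $u^j \searrow u$ in $\E^p(X,\te)$ implies $d_p(u^j, u) \to 0$. Applying the Lidskii-type inequality of Theorem~\ref{thm: introduction lidskii type inequality} with $u^1 \geq u^j \geq u$ yields that $d_p(u^j, u)$ is monotone decreasing in $j$, so I only need to produce a subsequence tending to $0$.

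For each fixed $k$, the pair $P_\te[\psi_k](u^j)$ and $P_\te[\psi_k](u)$ lies in $\E^p(X,\te,\psi_k)$, and this space corresponds bijectively through the modification $\mu : \tX \to X$ of Section~\ref{sec: dp metric in analytic singularity} to $\E^p(\tX, \tte)$ with $\tte$ big and nef. Since $u^j \searrow u$ forces $P_\te[\psi_k](u^j) \searrow P_\te[\psi_k](u)$ pointwise (the envelope is order-preserving, and the decreasing limit is a candidate in the envelope defining the limit), the corresponding $\tte$-psh potentials on $\tX$ also decrease to the correct limit. In the big-and-nef setting, continuity of $d_p$ under decreasing sequences is known from \cite{dinezza2018lp}, so for every fixed $k$,
\[
\lim_{j \to \infty} d_p\bigl(P_\te[\psi_k](u^j),\, P_\te[\psi_k](u)\bigr) = 0.
\]

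The main obstacle is the exchange of limits in $j$ and $k$, since by Equation~\eqref{eq: definition of dp for big class} the target distance $d_p(u^j, u)$ is itself only accessible as $\lim_k d_p(P_\te[\psi_k](u^j), P_\te[\psi_k](u))$. To bridge this, I would prove a monotone-pair energy estimate of the form
\[
d_p^p(v, w) \leq C_p \left( \int_X (v-w)^p \te_w^n + \int_X (v-w)^p \te_v^n \right) \quad \text{for } v \geq w \text{ in } \E^p(X,\te),
\]
establishing it first in the big and nef case by K\"ahler approximation $\om_\ee = \bb + \ee\om$ (following Darvas), then in the analytic singularity case by transport through the bijection with $(\tX, \tte)$, and finally in the big case by passing $k \to \infty$ via continuity of the non-pluripolar product under monotone envelopes and the finite $p$-energy hypothesis. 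Applied to $v = u^j$, $w = u$, this bounds $d_p^p(u^j, u)$ by two integrals that both tend to $0$: the first by dominated convergence against the fixed measure $\te_u^n$ with majorant $(u^1-u)^p$, and the second using the convergence $\te_{u^j}^n \to \te_u^n$ for decreasing sequences in $\E^p(X,\te)$ together with the same majorant. I expect controlling the second integral, where both integrand and measure are varying, to be the main technical hurdle, requiring a careful application of the plurifine continuity properties of the non-pluripolar product along decreasing sequences.
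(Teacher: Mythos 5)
Note first that the paper does not prove this lemma at all: it is quoted verbatim from \cite[Lemma 5.2]{guptacompletegeodescimetricinbigclasses}, and the engine behind it is the comparability $\frac{1}{C}I_{p}(u_{0},u_{1}) \leq d_{p}^{p}(u_{0},u_{1}) \leq C I_{p}(u_{0},u_{1})$ with $I_{p}(u_{0},u_{1}) = \int_{X}|u_{0}-u_{1}|^{p}(\te^{n}_{u_{0}}+\te^{n}_{u_{1}})$, i.e.\ \cite[Lemma 4.12]{guptacompletegeodescimetricinbigclasses}, which the present paper itself invokes in the proof of Lemma~\ref{lem: decreasing bounded dp sequence has a limit in Ep}. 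Your proposed ``monotone-pair energy estimate'' is exactly the upper half of this comparability, so the entire three-stage re-derivation (K\"ahler approximation, resolution, $k \to \infty$) is unnecessary, as is the detour through the projections $P_{\te}[\psi_{k}]$ and the delicate exchange of limits in $j$ and $k$: the right move is to avoid the double limit altogether by citing the known quasi-metric bound, not to engineer uniformity in $k$.

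Two genuine problems remain. First, a circularity: you invoke Theorem~\ref{thm: introduction lidskii type inequality} to get monotonicity of $j \mapsto d_{p}(u^{j},u)$, but within this paper the Lidskii inequality is \emph{proved using} the very lemma you are trying to establish (see the proof of Lemma~\ref{lem: Lidskii inequality for big and nef classes}, where the limit $j \to \infty$ is taken via Lemma~\ref{lem: continuity under decreasing sequences}); the author is explicitly careful about such ordering elsewhere (cf.\ the parenthetical in Lemma~\ref{lem: endpoint stability}). This step is fortunately dispensable, since a bound $d_{p}^{p}(u^{j},u) \leq C I_{p}(u^{j},u)$ with $I_{p}(u^{j},u) \to 0$ forces convergence of the full sequence, with no subsequence extraction needed. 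Second, and more seriously, the step you defer --- showing $\int_{X}(u^{j}-u)^{p}\,\te^{n}_{u^{j}} \to 0$ --- is the entire analytic content of the lemma, and your sketch does not close it: dominated convergence is simply unavailable when the measures vary, and weak convergence $\te^{n}_{u^{j}} \to \te^{n}_{u}$ against a discontinuous, $j$-dependent integrand yields nothing. What is actually needed is a pluripotential-theoretic input such as the fundamental inequality comparing $\int_{X}|u^{j}-u|^{p}\te^{n}_{u^{j}}$ with $\int_{X}|u^{j}-u|^{p}\te^{n}_{u}$ for $u \leq u^{j}$ (available in the big setting through the Darvas--Di Nezza--Lu relative pluripotential theory), after which the fixed-measure integral does go to zero by dominated convergence with majorant $(u^{1}-u)^{p}$, integrable against $\te^{n}_{u}$ by mutual integrability of $\E^{p}(X,\te)$ potentials. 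As written, your proposal correctly isolates the hurdle but supplies no mechanism to clear it.
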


\begin{thm}[Uniform Convexity, {\cite[Theorem 8.3]{guptacompletegeodescimetricinbigclasses}}] \label{thm: uniform convexity}
    Let $p > 1$. Let $u, v_{0}, v_{1} \in \E^{p}(X,\te)$ and $v_{\la}$ be the weak geodesic joining $v_{0}$ and $v_{1}$. Then
    \begin{align*}
        d_{p}(u,v_{\lambda})^{2} &\leq (1-\la)d_{p}(u,v_{0})^{2} + \la d_{p}(u,v_{1})^{2} - (p-1)\la(1-\la)d_{p}(v_{0},v_{1})^{2}, \text{ if } 1< p \leq 2 \text{ and }\\
d_{p}(u,v_{\la})^{p} &\leq (1-\la)d_{p}(u,v_{0})^{p} + \la d_{p}(u,v_{1})^{p} - \la^{p/2}(1-\la)^{p/2}d_{p}(v_{0},v_{1})^{p}, \text{ if } p \leq 2.
    \end{align*} 
\end{thm}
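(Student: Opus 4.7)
The plan is to establish uniform convexity by cascading the inequality through the layers of approximation that define $d_{p}$ on $\E^{p}(X,\te)$: first the Kähler case, then the big and nef case via $\om_{\ee} = \bb + \ee\om$, then the analytic singularity case via Hironaka's resolution, and finally the general big case via Demailly's regularization $\psi_{k}\nearrow V_{\te}$.

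For the Kähler case, take $\om$ smooth and $u, v_{0}, v_{1}\in \cH_{\om}^{1,\bar 1}$. By Theorem~\ref{thm: geodesic speed and distance}, $d_{p}^{p}(u,v_{\la})$ equals the integral of $|\dot w(0)|^{p}$ against $\om^{n}_{u}$, where $w$ is the weak geodesic from $u$ to $v_{\la}$, and analogous formulas hold for $d_{p}^{p}(u,v_{0})$ and $d_{p}^{p}(u,v_{1})$. Reduced to these integral representations, the desired estimate becomes a pointwise Hanner-type inequality in $L^{p}(X,\om^{n}_{u})$ applied to the geodesic speeds in the case $1<p\leq 2$, and a Clarkson-type inequality when $p\geq 2$. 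This is essentially Darvas-Lu's argument in the Kähler setting; one then extends to all of $\E^{p}(X,\om)$ by $\cH_{\om}^{1,\bar 1}$-approximation and Lemma~\ref{lem: continuity under decreasing sequences}.

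Next I would pass to the big and nef case using the construction in Section~\ref{sec: dp metric in big and nef}. For $u = P_{\bb}(f), v_{i} = P_{\bb}(g_{i})\in \cH_{\bb}$, the approximants $u_{\ee} = \Poe(f),v_{i,\ee} = \Poe(g_{i})$ lie in $\cH_{\om_{\ee}}^{1,\bar 1}$ by Lemma~\ref{lem: envelope for eta}, so the Kähler step applies with the $\om_{\ee}$-metric. If $v_{\la,\ee}$ denotes the weak $\om_{\ee}$-geodesic joining $v_{0,\ee}$ and $v_{1,\ee}$, a monotonicity argument at the level of the subgeodesic supremum in Equation~\eqref{eq: def of geodesic} gives $v_{\la,\ee}\searrow v_{\la}$ as $\ee\searrow 0$, and the inequality passes to the limit by the definition of $d_{p}$ on $\cH_{\bb}$; one then extends to $\E^{p}(X,\bb)$ by another application of Lemma~\ref{lem: continuity under decreasing sequences}. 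The analytic singularity setting is then essentially free: the bijection $u\mapsto \tilde u$ between $\E^{p}(X,\te,\psi)$ and $\E^{p}(\tX,\tte)$ recalled in Section~\ref{sec: dp metric in analytic singularity} preserves $d_{p}$ by Equation~\eqref{eq: equation for dp in the new paper} and intertwines subgeodesics with subgeodesics, so uniform convexity transfers directly from the big and nef class $\tte$ on $\tX$.

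For the general big class one uses Demailly's $\psi_{k}\nearrow V_{\te}$ and applies the analytic singularity case at each level to $P_{\te}[\psi_{k}](u), P_{\te}[\psi_{k}](v_{0}), P_{\te}[\psi_{k}](v_{1})$; the conclusion then follows from Equation~\eqref{eq: definition of dp for big class} combined with Lemma~\ref{lem: continuity under decreasing sequences}. The main obstacle, I expect, is precisely this final limiting step: one must identify $P_{\te}[\psi_{k}](v_{\la})$ with (or at least asymptotically with) the weak $\psi_{k}$-relative geodesic joining $P_{\te}[\psi_{k}](v_{0})$ and $P_{\te}[\psi_{k}](v_{1})$, so that the uniform convexity inequality applied at level $k$ really controls $d_{p}(u,P_{\te}[\psi_{k}](v_{\la}))$ in a form stable under $k\to\infty$. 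Establishing this compatibility amounts to showing that $P_{\te}[\psi_{k}]$ commutes with the subgeodesic supremum defining $v_{\la}$, uniformly in the geodesic time $\la\in [0,1]$; such a compatibility is implicit in the construction of $d_{p}$ itself in \cite{guptacompletegeodescimetricinbigclasses}, but must be extracted carefully as a statement about one-parameter families rather than pairs of endpoints.
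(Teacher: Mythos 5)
This theorem is not proved in the present paper at all: it is quoted verbatim from \cite[Theorem 8.3]{guptacompletegeodescimetricinbigclasses}, so there is no in-paper proof to compare against, but your cascade (K\"ahler base case via Darvas--Lu, then big and nef via $\om_{\ee}=\bb+\ee\om$, then analytic singularities via resolution, then big via $\psi_{k}\nearrow V_{\te}$) is exactly the approximation scheme of that reference, which Section~\ref{sec: convexity} of this paper replicates step by step for the parallel Buseman convexity statement, and you correctly isolate the genuine difficulty in the final limit --- identifying the level-$k$ relative geodesic with $P_{\te}[\psi_{k}](v_{\la})$ asymptotically, which is precisely Equation~\eqref{eq: projection coming closer from analytic singularity to big} here and ``Equation 26'' of the cited proof. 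The only two points you undersell are (i) the K\"ahler base case, which is not a direct pointwise Hanner/Clarkson inequality applied to initial velocities (the endpoint-to-velocity map is nonlinear, so Darvas--Lu's actual argument in \cite{DarvasLuuniformconvexity} is more delicate; deferring to their published result is nevertheless legitimate), and (ii) the fact that passing the midpoint term $d_{p}(u_{\ee},v_{\la,\ee})$ to the limit requires the contraction property of Lemma~\ref{lem: a general contraction property} applied to $P_{\bb}$, not merely the monotone convergence $v_{\la,\ee}\searrow v_{\la}$, since the distances at level $\ee$ live in a different cohomology class.
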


A corollary of Theorem~\ref{thm: uniform convexity} is 
\begin{cor}[{\cite[Corollary 8.5]{guptacompletegeodescimetricinbigclasses}}]\label{cor: uniform convexity}
    For $p > 1$, if $u,v_{0},v_{1} \in \E^{p}(X,\te)$ such that for some $\la \in [0,1]$ and $\ee > 0$, $d_{p}(u, v_{0}) \leq (\la + \ee) d_{p}(v_{0}, v_{1})$ and $d_{p}(u,v_{1}) \leq (1-\la + \ee) d_{p}(v_{0},v_{1})$, then there is a constant $C > 0$ such that
    \[
    d_{p}(u,v_{\la}) \leq C\ee^{\frac{1}{r}}d_{p}(v_{0},v_{1})
    \]
    where $r = \max\{2,p\}$ and $v_{t}$ is the geodesic joining $v_{0}$ and $v_{1}$. 
\end{cor}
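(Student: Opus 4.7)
The plan is to apply the uniform convexity of Theorem~\ref{thm: uniform convexity} to the weak geodesic $\{v_t\}$ and the point $u$, substitute the hypothesized bounds $d_p(u, v_0) \leq (\la + \ee) D$ and $d_p(u, v_1) \leq (1 - \la + \ee) D$ where $D := d_p(v_0, v_1)$, and then extract the $\ee^{1/r}$ rate by algebraic simplification.

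For $p \geq 2$ (so $r = p$), the second inequality of Theorem~\ref{thm: uniform convexity} gives
\[
d_p(u, v_\la)^p \leq D^p \bigl[(1-\la)(\la+\ee)^p + \la(1-\la+\ee)^p - \la^{p/2}(1-\la)^{p/2}\bigr].
\]
In the midpoint case $\la = 1/2$, the right-hand side collapses to $[(1/2+\ee)^p - (1/2)^p] D^p$, which by the mean value theorem is at most $p(1/2+\ee)^{p-1}\ee D^p$, yielding $d_p(u, v_{1/2}) \leq C \ee^{1/p} D$. The analogous computation with the first inequality of Theorem~\ref{thm: uniform convexity} handles $1 < p \leq 2$ (with exponent $1/2 = 1/r$), where substitution yields $d_p(u,v_{1/2})^2 \leq [4\la(1-\la)\ee + \ee^2] D^2 \leq 2\ee D^2$ cleanly at $\la = 1/2$. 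For general $\la \in (0,1)$, approach $v_\la$ by dyadic midpoints along the geodesic and iterate the midpoint estimate; the telescoping bound preserves the $\ee^{1/r}$ rate at the cost of a constant depending on $\la$.

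The main obstacle is that the uniform convexity of Theorem~\ref{thm: uniform convexity} is sharp only at the midpoint $\la = 1/2$ when $p \neq 2$: a naive substitution at general $\la$ leaves a positive residue of the form $(2-p)\la(1-\la) D^2$ (for $1 < p \leq 2$) or $[(1-\la)\la^p + \la(1-\la)^p - \la^{p/2}(1-\la)^{p/2}] D^p$ (for $p \geq 2$) that does not decay with $\ee$. The dyadic bisection must therefore be carried out carefully so that the accumulated error remains of order $\ee^{1/r}$. In the $p = 2$ case the first inequality of Theorem~\ref{thm: uniform convexity} is a CAT(0)-type identity and is sharp for every $\la$, so the argument simplifies and yields a universal constant $C = \sqrt{2}$.
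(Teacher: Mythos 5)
The paper does not prove this corollary; it imports it verbatim from \cite[Corollary 8.5]{guptacompletegeodescimetricinbigclasses}, so there is no in-paper argument to compare against. Judged on its own, your proposal has a genuine gap, and in fact two. First, your midpoint computation for $1<p\leq 2$ is wrong. The deficit term in the first inequality of Theorem~\ref{thm: uniform convexity} is $(p-1)\la(1-\la)d_p(v_0,v_1)^2$, not $\la(1-\la)d_p(v_0,v_1)^2$, so substituting the hypotheses at $\la=1/2$ gives
\[
d_p(u,v_{1/2})^2 \leq \Bigl[\tfrac{2-p}{4} + \ee + \ee^2\Bigr]D^2,
\]
not $[\ee+\ee^2]D^2$. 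The residue $\tfrac{2-p}{4}D^2$ is exactly the term you yourself identify as the obstruction ``at general $\la$''; it does not vanish at $\la=1/2$ unless $p=2$, so the foundation of your argument collapses for $1<p<2$. What is actually needed there is the \emph{dual} form of $2$-uniform convexity, roughly $d_p(u,v_\la)^2 \leq \tfrac{1}{p-1}\bigl[(1-\la)d_p(u,v_0)^2 + \la d_p(u,v_1)^2 - \la(1-\la)d_p(v_0,v_1)^2\bigr]$ (the other substitution into the Clarkson-type inequality $\|\tfrac{f+g}{2}\|^2+(p-1)\|\tfrac{f-g}{2}\|^2\leq\tfrac12\|f\|^2+\tfrac12\|g\|^2$, with $f=u-v_0$, $g=v_1-u$ rather than $g=u-v_1$). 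That form is sharp along the geodesic for \emph{every} $\la$, and direct substitution then yields $d_p(u,v_\la)^2\leq\tfrac{1}{p-1}[4\la(1-\la)\ee+\ee^2]D^2$ with no bisection needed. It is not, however, a formal consequence of the inequality as stated in Theorem~\ref{thm: uniform convexity}, so you cannot get it by ``algebraic simplification'' of that statement.

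Second, the dyadic bisection you invoke for general $\la$ (which you still need for $p>2$, where the stated inequality is sharp only at the midpoint) does not have the inputs it requires. To apply the midpoint estimate to the sub-segment $[v_0,v_{2\la}]$ you must know that $d_p(u,v_{2\la})\leq(\tfrac12+\ee')d_p(v_0,v_{2\la})$, but the hypotheses only control $d_p(u,v_0)$ and $d_p(u,v_1)$; the triangle inequality gives a lower bound on $d_p(u,v_{2\la})$ of the right order but only useless upper bounds (of size $\approx(1-\la+\ee)D$ or $(\tfrac34+\ee)D$ already at the first step for $\la=1/4$). So the iteration cannot even start, and ``telescoping'' is not available. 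Your $p\geq 2$ midpoint computation and the $p=2$ case are correct, but as written the proposal proves the corollary only for $\la=1/2$ and $p\geq2$ (plus all $\la$ when $p=2$), not the statement as claimed.
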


A consequence of these properties is the following 

\begin{lem} \label{lem: one side inequality for dp distances}
    If $1\leq p' \leq p < \infty$ then for any $u_{0}, u_{1} \in \E^{p}(X,\te)$, 
    \[
    d_{p'}(u_{0}, u_{1}) \leq d_{p}(u_{0}, u_{1}) \vol(\te)^{\frac{1}{p'} - \frac{1}{p}}.
    \]
\end{lem}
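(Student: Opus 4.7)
The plan is to first establish the inequality for $u_{0}, u_{1} \in \cH_{\te}$ via Theorem~\ref{thm: geodesic speed and distance} together with H\"older's inequality, and then propagate it to all of $\E^{p}(X,\te)$ using the approximation scheme that defines $d_{p}$.

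For the direct step, let $u_{0}, u_{1} \in \cH_{\te}$ and let $u_{t}$ be the weak geodesic joining them. Applying Theorem~\ref{thm: geodesic speed and distance} separately with exponents $p$ and $p'$ gives
\[
d_{p}^{p}(u_{0}, u_{1}) = \int_{X} |\dot{u}_{0}|^{p}\, \te^{n}_{u_{0}}, \qquad d_{p'}^{p'}(u_{0}, u_{1}) = \int_{X} |\dot{u}_{0}|^{p'}\, \te^{n}_{u_{0}}.
\]
Since $u_{0} \in \cH_{\te}$ has minimal singularities, $\int_{X} \te^{n}_{u_{0}} = \vol(\te)$, and H\"older's inequality with exponents $p/p' \geq 1$ and its conjugate yields
\[
\int_{X} |\dot{u}_{0}|^{p'} \te^{n}_{u_{0}} \leq \left(\int_{X} |\dot{u}_{0}|^{p} \te^{n}_{u_{0}}\right)^{p'/p} \vol(\te)^{1 - p'/p}.
\]
Taking $p'$-th roots produces the desired bound on $\cH_{\te}$.

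For the general case, fix $u_{0}, u_{1} \in \E^{p}(X,\te) \subset \E^{p'}(X,\te)$ and let $\psi_{k} \nearrow V_{\te}$ be the Demailly regularizers of analytic singularity type. Set $u_{i}^{k} := P_{\te}[\psi_{k}](u_{i}) \in \E^{p}(X,\te,\psi_{k})$. Under the bijection of Section~\ref{sec: dp metric in analytic singularity}, each $u_{i}^{k}$ corresponds to $\tu_{i}^{k} \in \E^{p}(\tX, \tte_{k})$ on a suitable modification with $\tte_{k}$ big and nef, the same map realises $d_{p}$ and $d_{p'}$ as isometries simultaneously (since the bijection does not depend on the exponent), and $\vol(\tte_{k}) = \int_{X} \te^{n}_{\psi_{k}}$. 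On $\tX$, approximate $\tu_{i}^{k}$ by decreasing sequences drawn from $\cH_{\tte_{k}}$ as in Section~\ref{sec: dp metric in big and nef}, apply the direct step to those, and pass to the limit using Lemma~\ref{lem: continuity under decreasing sequences} for both metrics. This delivers
\[
d_{p'}(u_{0}^{k}, u_{1}^{k}) \leq d_{p}(u_{0}^{k}, u_{1}^{k}) \left(\int_{X} \te^{n}_{\psi_{k}}\right)^{1/p' - 1/p}.
\]
Finally, send $k \to \infty$: by~\eqref{eq: definition of dp for big class} the two $d$-terms converge to $d_{p'}(u_{0}, u_{1})$ and $d_{p}(u_{0}, u_{1})$, while Witt Nystr\"om's monotonicity forces $\int_{X} \te^{n}_{\psi_{k}} \nearrow \vol(\te)$, giving the claim.

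The main technical point to verify is that the chain of approximations---Demailly regularization on $X$, the modification bijection, and the $\cH_{\tte_{k}}$-regularization on the big and nef side---is compatible with $d_{p}$ and $d_{p'}$ at once and with the volume identity $\vol(\tte_{k}) = \int_{X} \te^{n}_{\psi_{k}}$. Once these compatibilities are in hand, the argument is a clean H\"older plus continuity estimate.
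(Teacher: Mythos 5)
Your first step (the H\"older estimate on $\cH_{\te}$ via Theorem~\ref{thm: geodesic speed and distance}) is exactly the paper's argument. Where you diverge is in the extension to all of $\E^{p}(X,\te)$: you route the general case through the full construction of $d_{p}$ --- Demailly regularizers $\psi_{k}$, the projections $P_{\te}[\psi_{k}]$, the modification bijection to a big and nef class, and a further $\cH_{\tte_{k}}$-approximation upstairs --- whereas the paper simply picks decreasing sequences $u_{0}^{j}, u_{1}^{j} \in \cH_{\te}$ with $u_{i}^{j} \searrow u_{i}$, applies the $\cH_{\te}$-case to each $j$, and passes to the limit using Lemma~\ref{lem: continuity under decreasing sequences} for both exponents at once. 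Both routes work, since Theorem~\ref{thm: geodesic speed and distance} and Lemma~\ref{lem: continuity under decreasing sequences} are already stated directly for the big class $\te$, so there is no need to descend to analytic singularity types; your version costs you the extra compatibility checks you flag at the end (simultaneous isometry for $d_{p}$ and $d_{p'}$, the volume identity on the modification), all of which do hold but are avoidable. One small imprecision: Witt Nystr\"om's monotonicity gives only that $\int_{X}\te^{n}_{\psi_{k}}$ is increasing and bounded above by $\vol(\te)$, not by itself that the limit equals $\vol(\te)$ (that requires the convergence of non-pluripolar masses along increasing sequences). Fortunately your argument never needs the limit: since the exponent $\tfrac{1}{p'}-\tfrac{1}{p}$ is nonnegative, the bound $\left(\int_{X}\te^{n}_{\psi_{k}}\right)^{1/p'-1/p} \leq \vol(\te)^{1/p'-1/p}$ already suffices at each stage $k$.
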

\begin{proof}
    First we assume that $u_{0}, u_{1} \in \cH_{\te}$ and $u_{t}$ is the weak geodesic joining $u_{0}$ and $u_{1}$. Then by Theorem~\ref{thm: geodesic speed and distance},
    \[
    d_{p}^{p}(u_{0}, u_{1}) = \int_{X} |\dot{u}_{0}|^{p}\te^{n}_{u_{0}} \qquad \text{ and } \qquad d_{p'}^{p'}(u_{0}, u_{1}) = \int_{X} |\dot{u}_{0}|^{p'}\te^{n}_{u_{0}}.
    \]
    Applying H\"older inequality with factors $p/p'$ and $p/(p-p')$ we can write,
    \[
    \int_{X} |\dot{u}_{0}|^{p'}\te^{n}_{u_{0}} \leq \left(\int_{X}|\dot{u}_{0}|^{p}\te^{n}_{u_{0}}\right)^{p'/p} \left( \int_{X} \te^{n}_{u_{0}}\right)^{\frac{p-p'}{p}}.
    \]
    Raising both sides to $1/p'$ and noticing that $\int_{X}\te^{n}_{u_{0}} = \vol(\te)$, we get that 
    \[
    d_{p'}(u_{0}, u_{1}) \leq d_{p}(u_{0},u_{1})\vol(\te)^{\frac{1}{p'} - \frac{1}{p}}.
    \]

    More generally, if $u_{0}, u_{1} \in \E^{p}(X,\te)$, then we can find $u_{0}^{j}, u_{1}^{j} \in \cH_{\te}$ such that $u_{0}^{j} \searrow u_{0}$ and $u_{1}^{j} \searrow u_{1}$. By Lemma~\ref{lem: continuity under decreasing sequences}, 
    \[
    \lim_{j \to \infty} d_{p'}(u_{0}^{j}, u_{1}^{j}) = d_{p'}(u_{0}, u_{1}) \qquad \text{ and } \qquad \lim_{j \to \infty} d_{p}(u_{0}^{j}, u_{1}^{j}) = d_{p}(u_{0}, u_{1}).
    \]
    From the proof above we can write,
    \[
    d_{p'}(u_{0}, u_{1}) = \lim_{j \to \infty} d_{p'}(u_{0}^{j}, u_{1}^{j}) \leq \lim_{j \to \infty} d_{p}(u_{0}^{j}, u_{1}^{j}) \vol(\te)^{\frac{1}{p'} - \frac{1}{p}} = d_{p}(u_{0}, u_{1}) \vol(\te)^{\frac{1}{p'}- \frac{1}{p}}.
    \]
\end{proof}

The following Lemma from \cite{BDL17} shows that the weak geodesics are stable under perturbations of the endpoints. 
\begin{lem}\label{lem: endpoint stability}
    If $u_{0}^{j}, u_{1}^{j}, u_{0}, u_{1} \in \E^{p}(X,\te)$ satisfy $d_{p}(u_{0}^{j}, u_{0}) \to 0$ and $d_{p}(u_{1}^{j}, u_{1}) \to 0$ as $j \to \infty$, then $d_{p}(u_{t}^{j}, u_{t}) \to 0$ as $j \to \infty$ where $u_{t}^{j}$, $u_{t}$ are the weak geodesics joining $u_{0}^{j}, u_{1}^{j}$ and $u_{0}, u_{1}$ respectively. 
\end{lem}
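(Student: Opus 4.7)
The plan is to deduce this lemma directly from the Busemann convexity of $(\E^{p}(X,\te), d_{p})$ already established in Theorem~\ref{thm: introduction buseman convexity}. The weak geodesics $\{u_{t}\}_{t \in [0,1]}$ and $\{u_{t}^{j}\}_{t \in [0,1]}$ joining $u_{0}, u_{1}$ and $u_{0}^{j}, u_{1}^{j}$ respectively are the $d_{p}$-geodesics of the space $(\E^{p}(X,\te), d_{p})$ by the construction recalled in Section~\ref{sec: preliminaries}, so Theorem~\ref{thm: introduction buseman convexity} applies to the pair $(\{u_{t}\}, \{u_{t}^{j}\})$ and yields
\[
d_{p}(u_{t}^{j}, u_{t}) \leq (1-t)\, d_{p}(u_{0}^{j}, u_{0}) + t\, d_{p}(u_{1}^{j}, u_{1})
\]
for every $t \in [0,1]$ and every $j$. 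Letting $j \to \infty$, both summands on the right hand side tend to zero by hypothesis, so $d_{p}(u_{t}^{j}, u_{t}) \to 0$; in fact the convergence is uniform in $t \in [0,1]$, since the right hand side does not depend on $t$ up to the factors $(1-t)$ and $t$, which are bounded by $1$.

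Nothing beyond this is needed. The endpoint-stability statement is a purely metric-geometric consequence of Busemann convexity, valid in any Busemann convex geodesic metric space, and the pluripotential-theoretic content has been entirely absorbed into the proof of Theorem~\ref{thm: introduction buseman convexity} itself. In the big setting that proof proceeds through the approximation scheme via potentials with analytic singularities, reducing to the big and nef case treated in Section~\ref{sec: dp metric in big and nef} and ultimately to the K\"ahler case. Thus the main obstacle in obtaining Lemma~\ref{lem: endpoint stability} is not present here at all: it lies in Theorem~\ref{thm: introduction buseman convexity}, and once Busemann convexity is available the present lemma is immediate.
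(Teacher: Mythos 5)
Your argument is correct, but it is not the route the paper takes, and it carries one obligation you should make explicit. The paper proves this lemma in the preliminaries by following the argument of \cite[Proposition 4.3]{BDL17}: it assembles the needed ingredients in the big setting, namely quasi-monotonicity of $d_{p}$, the domination principle for $I$ (that $u \leq v$ and $I(u)=I(v)$ force $u=v$), and the monotonicity consequence $u\leq v\leq w \Rightarrow d_{p}(u,v)\leq d_{p}(u,w)$ of the Lidskii inequality, with the author explicitly flagging that the Lidskii proof does not rely on this lemma. You instead read the statement off Busemann convexity (Theorem~\ref{thm: introduction buseman convexity}) applied to the pair of weak geodesics with endpoints $u_{0},u_{1}$ and $u_{0}^{j},u_{1}^{j}$, which immediately gives $d_{p}(u_{t}^{j},u_{t}) \leq (1-t)d_{p}(u_{0}^{j},u_{0}) + t\,d_{p}(u_{1}^{j},u_{1}) \to 0$, uniformly in $t$. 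For this to be legitimate you must verify there is no circularity, since the lemma is stated in Section~\ref{sec: preliminaries} and Busemann convexity is only proved in Section~\ref{sec: convexity}; checking the proofs there, Theorem~\ref{thm: Buseman convexity for p =1}, Lemma~\ref{lem: projection getting closer property from Kahler to nef}, Theorem~\ref{thm: buseman convexity for big and nef and p > 1}, and Theorem~\ref{thm: Buseman convexity for big class and p > 1} use only continuity along decreasing sequences (Lemma~\ref{lem: continuity under decreasing sequences}), the contraction property, and the comparison principle, never endpoint stability, so your derivation is sound --- but a careful write-up should say so, and the lemma would then have to be restated or proved only after Section~\ref{sec: convexity}. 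What each approach buys: the paper's proof keeps the lemma available as a genuine preliminary, independent of the main theorems, and works verbatim for arbitrary $d_{p}$-convergent endpoint sequences by elementary pluripotential estimates; yours is shorter and purely metric-geometric, valid in any Busemann convex geodesic metric space, at the cost of front-loading all the analytic work into Theorem~\ref{thm: introduction buseman convexity} and constraining the logical order of the paper.
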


\begin{proof}
    The proof is the same as in \cite[Proposition 4.3]{BDL17}. We just need the corresponding results used in the proof in \cite[Proposition 4.3]{BDL17} for the big setting. For that we can obtain the quasi-monotonicity for the $d_{p}$ metric from \cite[Theorem 5.2]{GuptaCompletemetrictopology} and \cite[Lemma 4.12]{guptacompletegeodescimetricinbigclasses}. The fact that for $u,v \in \E^{1}(X,\te)$ satisfying $u\leq v$ and $I(u) = I(v)$ imply $u = v$ follows by combining \cite[Theorem 2.4, Proposition 2.5, and Proposition 2.8]{darvas2021l1}. Moreover, the fact that $u \leq v \leq w$ implies $d_{p}(u,v) \leq d_{p}(u,w)$ can be obtained from Theorem~\ref{thm: introduction lidskii type inequality} (whose proof does not rely on this lemma).
\end{proof}

\begin{lem}\label{lem: decreasing bounded dp sequence has a limit in Ep}
    If $u_{j} \in \E^{p}(X,\te)$ is a $d_{p}$-bounded decreasing sequence, then $\lim_{j \to \infty} u_{j}=: u \in \E^{p}(X,\te)$. 
\end{lem}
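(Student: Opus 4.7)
The plan is to combine the Lidskii-type inequality of Theorem~\ref{thm: introduction lidskii type inequality} with the completeness of $(\E^{p}(X,\te), d_{p})$ from \cite{guptacompletegeodescimetricinbigclasses}. I will first show that the decreasing sequence $\{u_{j}\}$ is $d_{p}$-Cauchy, then extract a $d_{p}$-limit $u^{*} \in \E^{p}(X,\te)$ by completeness, and finally identify $u^{*}$ with the pointwise decreasing limit $u = \lim_{j} u_{j}$.

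For the Cauchy step, fix indices $j \leq k$, so that $u_{0} \geq u_{j} \geq u_{k}$. Applying Theorem~\ref{thm: introduction lidskii type inequality} with $(u,v,w) = (u_{0},u_{j},u_{k})$ gives
\[
d_{p}^{p}(u_{j},u_{k}) \leq d_{p}^{p}(u_{0},u_{k}) - d_{p}^{p}(u_{0},u_{j}).
\]
In particular $d_{p}^{p}(u_{0},u_{j}) \leq d_{p}^{p}(u_{0},u_{k})$, so the sequence $j \mapsto d_{p}(u_{0},u_{j})$ is nondecreasing; by the $d_{p}$-boundedness hypothesis it is also bounded above, hence convergent. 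The displayed right-hand side therefore tends to $0$ as $j,k \to \infty$, showing that $\{u_{j}\}$ is $d_{p}$-Cauchy. Completeness of $(\E^{p}(X,\te), d_{p})$ then produces $u^{*} \in \E^{p}(X,\te)$ with $d_{p}(u_{j}, u^{*}) \to 0$.

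The final step is to match $u^{*}$ with the pointwise limit $u$. By Lemma~\ref{lem: one side inequality for dp distances} the convergence $u_{j} \to u^{*}$ in $d_{p}$ dominates $d_{1}$-convergence, and the metric-topology results of \cite{guptacompletegeodescimetricinbigclasses} upgrade this to $L^{1}(X)$-convergence of potentials. In particular $\int_{X} u_{j}\, \om^{n} \to \int_{X} u^{*}\, \om^{n}$, which is finite. On the other hand, $u_{j} \searrow u$ pointwise and $u_{j} \leq u_{0} \in L^{1}(X)$, so monotone convergence gives $\int_{X} u_{j}\, \om^{n} \to \int_{X} u\, \om^{n}$. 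Comparing, $u$ is not identically $-\infty$ and $u = u^{*}$ almost everywhere; since both are upper semicontinuous $\te$-psh functions, they agree everywhere, and so $u = u^{*} \in \E^{p}(X,\te)$.

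The main obstacle I anticipate is the identification step: $d_{p}$-convergence is, a priori, only a metric statement, and one must import enough pointwise information (through $d_{1}$, $L^{1}(X)$, or convergence in capacity) to compare it with the pointwise decreasing limit. Once that link is in place, the rest of the argument is a routine Cauchy-plus-completeness pattern driven entirely by the Lidskii inequality.
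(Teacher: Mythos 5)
Your argument is correct in its individual steps, but it takes a genuinely different --- and more heavily loaded --- route than the paper. The paper's proof is a two-line soft compactness argument: by the comparability of $d_{p}$ with the quasi-metric $I_{p}$ from \cite[Lemma 4.12]{guptacompletegeodescimetricinbigclasses}, a $d_{p}$-bounded sequence satisfies $\sup_{j}\int_{X}|u_{0}-u_{j}|^{p}\te^{n}_{u_{j}}<\infty$, and one then quotes the closedness of $\E^{p}(X,\te)$ under decreasing sequences with uniformly bounded $p$-energy from \cite{Guedj2019PlurisubharmonicEA}. No Cauchyness, no completeness, and no Lidskii inequality are needed. Your proof instead upgrades boundedness to $d_{p}$-Cauchyness via the Lidskii-type inequality and then invokes completeness of $(\E^{p}(X,\te),d_{p})$; this yields the stronger conclusion that the sequence actually $d_{p}$-converges to its decreasing limit, which is a nice bonus (the paper only gets this later, via Lemma~\ref{lem: continuity under decreasing sequences}).

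Two caveats you should address. First, the lemma sits in Section~\ref{sec: preliminaries} while the Lidskii inequality is only proved in Section~\ref{sec: Lidskii type inequality}; the forward reference is harmless only because the proof of Theorem~\ref{thm: Lidskii inequality for big classes} does not use this lemma (it rests on the K\"ahler-case inequality of Darvas--Lu, the approximation scheme, and Lemma~\ref{lem: continuity under decreasing sequences}), but this needs to be said explicitly, as the paper itself does for a similar issue in Lemma~\ref{lem: endpoint stability}. Second, and more seriously, the standard proof of completeness of $(\E^{p},d_{p})$ --- both in Darvas's K\"ahler-case argument and in \cite{guptacompletegeodescimetricinbigclasses} --- constructs the limit of a Cauchy sequence precisely by producing a $d_{p}$-bounded decreasing sequence of envelopes and then applying the present lemma (or its analogue). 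Deriving the lemma from completeness therefore risks running the logic backwards; it is formally admissible only if you treat the completeness theorem of the earlier paper as a black box whose proof is independent of the statement here, which is exactly the dependency the paper's own proof avoids by arguing through the uniform energy bound instead.
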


\begin{proof}
    By \cite[Lemma 4.12]{guptacompletegeodescimetricinbigclasses}, the metric $d_{p}$ is comparable to the quasi-metric $I_{p}$ i.e., there exists $C > 1$, such that 
    \[
    \frac{1}{C}I_{p}(u_{0}, u_{1}) \leq d_{p}^{p}(u_{0}, u_{1}) \leq CI_{p}(u_{0},u_{1}).
    \]
    Here the functional $I_{p}$ is given by 
    \[
    I_{p}(u_{0}, u_{1}) = \int_{X}|u_{0} - u_{1}|^{p} (\te^{n}_{u_{0}} + \te^{n}_{u_{1}})
    \]
    for $u_{0}, u_{1} \in \E^{p}(X,\te)$. So, if the sequence $u_{j}$ is $d_{p}$-bounded, then it is $I_{p}$-bounded as well. This means that 
    \[
    \sup_{j \in \N} \int_{X}|u_{0} - u_{j}|^{p}\te^{n}_{u_{j}} < \infty.
    \]
    Thus by \cite{Guedj2019PlurisubharmonicEA}, the limit $\lim_{j \to \infty} u_{j} =: u \in \E^{p}(X,\te)$.
\end{proof}

We will end this section by proving a $d_p$-contraction property for the projection into different cohomology classes. This result will play an important role in obtaining our main results. This is an analog of the contraction property in \cite[Theorem 7.3]{guptacompletegeodescimetricinbigclasses}, and \cite[Theorem 4.3]{TrusianiL1metric} for $p = 1$, where the projection operator maps to a different singularity type in the same cohomology class. 

\begin{lem}[Contraction property]\label{lem: a general contraction property}
    If $\te$ and $\eta$ are closed smooth real $(1,1)$-forms representing big cohomology classes such that $\theta \leq \eta$. Then the map $P_{\te} : \mathcal{H}_{\eta} \mapsto \mathcal{H}_{\te}$ is a contraction. This means that $v_{0}, v_{1} \in \cH_{\eta}$, then 
    \[
    d_{p}(P_{\te}(v_{0}), P_{\te}(v_{1})) \leq d_{p}(v_{0},v_{1}).
    \]
\end{lem}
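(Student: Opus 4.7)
Plan: I would first verify that the target of $P_{\te}$ really lies in $\cH_{\te}$, reduce to the Kähler setting via the customary $\ee\om$-perturbation, and then produce a $\te$-subgeodesic joining $P_{\te}(v_{0})$ and $P_{\te}(v_{1})$ by pushing the $\eta$-geodesic through the envelope; the contraction will follow from Theorem~\ref{thm: geodesic speed and distance} together with Lemma~\ref{lem: measure on contact set}.

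\textbf{Well-definedness and reduction.} Since $\te \leq \eta$ forces $\PSH(X, \te) \subset \PSH(X, \eta)$, for any $f \in C^{1,\bar{1}}(X)$ one has the identity $P_{\te}(P_{\eta}(f)) = P_{\te}(f)$, because any $\te$-psh $w \leq P_{\eta}(f)$ is automatically $\leq f$ and hence $\leq P_{\te}(f)$. Writing $v_{i} = P_{\eta}(f_{i})$ with $f_{i} \in C^{1,\bar{1}}(X)$ thus gives $P_{\te}(v_{i}) = P_{\te}(f_{i}) \in \cH_{\te}$. Fixing an auxiliary Kähler form $\om$ and setting $\te_{\ee} := \te + \ee\om$, $\eta_{\ee} := \eta + \ee\om$ produces Kähler forms with $\te_{\ee} \leq \eta_{\ee}$, and the natural approximants $v_{i,\ee} := P_{\eta_{\ee}}(f_{i})$, $u_{i,\ee} := P_{\te_{\ee}}(f_{i}) = P_{\te_{\ee}}(v_{i,\ee})$ decrease to $v_{i}$ and $u_{i}$ respectively. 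By Lemma~\ref{lem: continuity under decreasing sequences}, the contraction for $\te, \eta$ big follows from the Kähler case, to which I now restrict.

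\textbf{Subgeodesic lift.} When $\te, \eta$ are Kähler, $v_{0}, v_{1} \in \cH_{\eta}$ are $C^{1,\bar{1}}$ (Lemma~\ref{lem: envelope for eta}) and the weak $\eta$-geodesic $v_{t}$ is $C^{1,\bar{1}}$ as well (Lemma~\ref{lem: C11bar regularity for eta}). Lift $V(z, x) := v_{\text{Re}(z)}(x)$ to $S \times X$, which is $\pi^{*}\eta$-psh, and form
\[
U(z, x) := \bigl(\sup\{W \in \PSH(S \times X, \pi^{*}\te) : W \leq V\}\bigr)^{*}.
\]
Then $U$ is $\pi^{*}\te$-psh, $\leq V$, and invariant under $z \mapsto z + is$, so its slice $u'_{t}(x) := U(t, x)$ is $\te$-psh and $\leq v_{t}$, giving $u'_{t} \leq P_{\te}(v_{t})$. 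For the reverse inequality I would use the Legendre decomposition from Subsection~\ref{subsec: ross witt nystrom in introduction}: writing $v_{t} = \sup_{\tau}(\hat v_{\tau} + t\tau)$, each $(z, x) \mapsto P_{\te}(\hat v_{\tau})(x) + \tau\,\text{Re}(z)$ is $\pi^{*}\te$-psh, $\leq V$, and taking the usc-regularized supremum over $\tau$ produces a $\pi^{*}\te$-psh function whose slice at $t$ is exactly $P_{\te}(v_{t})$; comparison with $U$ gives $P_{\te}(v_{t}) \leq u'_{t}$. Hence $t \mapsto P_{\te}(v_{t})$ is a $\te$-subgeodesic joining $u_{0}$ and $u_{1}$, so if $\tilde u_{t}$ is the true weak $\te$-geodesic between them, $P_{\te}(v_{t}) \leq \tilde u_{t} \leq v_{t}$ for all $t \in [0,1]$.

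\textbf{Speed comparison.} Theorem~\ref{thm: geodesic speed and distance} gives
\[
d_{p}^{p}(u_{0}, u_{1}) = \int_{X} |\dot{\tilde u}_{0}|^{p} \te^{n}_{u_{0}}, \qquad d_{p}^{p}(v_{0}, v_{1}) = \int_{X} |\dot v_{0}|^{p} \eta^{n}_{v_{0}}.
\]
By Lemma~\ref{lem: measure on contact set}, $\te^{n}_{u_{0}} = \mathds{1}_{\{u_{0} = f_{0}\}} \te^{n}_{f_{0}}$ and $\eta^{n}_{v_{0}} = \mathds{1}_{\{v_{0} = f_{0}\}} \eta^{n}_{f_{0}}$; since $\{u_{0} = f_{0}\} \subset \{v_{0} = f_{0}\}$ (because $P_{\te}(f_{0}) \leq P_{\eta}(f_{0}) \leq f_{0}$) and $\te \leq \eta$ forces $\te^{n}_{f_{0}} \leq \eta^{n}_{f_{0}}$ pointwise, we obtain $\te^{n}_{u_{0}} \leq \eta^{n}_{v_{0}}$. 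Moreover, on this common contact set $u_{0} = v_{0} = f_{0}$, and the sandwich $P_{\te}(v_{t}) \leq \tilde u_{t} \leq v_{t}$ together with matching initial values controls $|\dot{\tilde u}_{0}|$ by $|\dot v_{0}|$ there. Integrating yields the desired contraction. The main obstacle is the subgeodesic lift — establishing the equality $u'_{t} = P_{\te}(v_{t})$ (and not merely $u'_{t} \leq P_{\te}(v_{t})$) with the correct endpoints — which requires the test-curve/Legendre duality machinery and careful handling of boundary convergence, and then combining this with the sign analysis of the initial speeds on the contact set is the delicate part of the argument.
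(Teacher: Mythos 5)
Your opening reduction and your contact-set computation are in the right spirit, but there are two genuine gaps. First, the reduction of the big case to the K\"ahler case via $\te_{\ee} = \te + \ee\om$, $\eta_{\ee} = \eta + \ee\om$ is not justified by anything quoted. The convergence $d_{p}(u_{0,\ee}, u_{1,\ee}) \to d_{p}(u_{0}, u_{1})$ is the \emph{definition} of $d_{p}$ only when $\te$ is big and nef (Section~\ref{sec: dp metric in big and nef}); for a general big class the metric is built through the analytic-singularity/resolution scheme, and it is not established in this paper that the naive $\ee\om$-approximation computes the same distance. Moreover Lemma~\ref{lem: continuity under decreasing sequences} concerns decreasing sequences inside a fixed $\E^{p}(X,\te)$, whereas your approximants $u_{i,\ee}$ are $\te_{\ee}$-psh and live in different cohomology classes, so the lemma does not apply. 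The paper avoids this entirely: Theorem~\ref{thm: geodesic speed and distance} and Lemma~\ref{lem: measure on contact set} are already valid for big classes, so one can compare the two geodesic speeds directly without any K\"ahler reduction.

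Second, and more seriously, the speed comparison does not close. On the contact set you correctly get $\dot{\tilde u}_{0} \leq \dot v_{0}$ from $\tilde u_{t} \leq v_{t}$ with matching initial values, but an upper bound on a signed quantity does not bound its absolute value: $\dot{\tilde u}_{0} = -5 \leq \dot v_{0} = 1$ is consistent with $|\dot{\tilde u}_{0}| > |\dot v_{0}|$. Your proposed lower sandwich $P_{\te}(v_{t}) \leq \tilde u_{t}$ does not help, since there is no control of $\bigl|\tfrac{d}{dt}P_{\te}(v_{t})|_{t=0}\bigr|$ by $|\dot v_{0}|$. You flag this as ``the delicate part'' but supply no mechanism. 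The paper's resolution is to first assume $f_{0} \leq f_{1}$, which forces both geodesics to be increasing in $t$, so $0 \leq \dot{\tilde u}_{0} \leq \dot v_{0}$ and the absolute values compare; the general case is then reduced to two ordered cases via the Pythagorean identity (Lemma~\ref{lem: Pythagorean identity}) together with the observation that $P_{\te}(P_{\eta}(v_{0},v_{1})) = P_{\te}(u_{0},u_{1})$. Incidentally, the entire subgeodesic-lift paragraph is an unnecessary detour: all that is used is $\tilde u_{t} \leq v_{t}$, which follows immediately because the $\te$-geodesic $\tilde u_{t}$ is an $\eta$-subgeodesic with endpoints below $v_{0}, v_{1}$; and your claimed identity $u'_{t} = P_{\te}(v_{t})$ via the Legendre decomposition only yields one inequality, since the usc-regularized supremum of $P_{\te}(\hat v_{\tau}) + t\tau$ need not recover $P_{\te}(v_{t})$.
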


\begin{proof}
    First we will show that $P_{\te}$ maps $\cH_{\eta}$ to $\cH_{\te}$. If $v \in \cH_{\eta}$, then $v = P_{\eta}(f)$ for some $f \in C^{1,\bar{1}}$. By a standard argument, we can show that $P_{\te}(v) = P_{\te}(f)$. Thus showing that $P_{\te}(v) \in \cH_{\te}$. 

   Now, assume that $f_{0}, f_{1} \in C^{1,\bar{1}}$ be such that $v_{0} = P_{\eta}(f_{0})$ and $v_{1} = P_{\eta}(f_{1})$. Also call $u_{0} = P_{\te}(f_{0}) = P_{\te}(v_{0})$ and $u_{1} = P_{\te}(f_{1}) = P_{\te}(v_{1})$. For now, we also assume that $f_{0} \leq f_{1}$. By Theorem~\ref{thm: geodesic speed and distance} and Lemma~\ref{lem: measure on contact set} the distances are given by 
   \[
   d_{p}^{p}(v_{0}, v_{1}) = \int_{X} |\dot{v}_{0}|^{p}\eta^{n}_{v_{0}} = \int_{\{ v_{0} = f_{0}\}} |\dot{v}_{0}|^{p}\eta^{n}_{f_{0}}
   \]
   and 
   \[
   d_{p}^{p}(u_{0},u_{1}) = \int_{X} |\dot{u}_{0}|^{p}\te^{n}_{u_{0}} = \int_{\{u_{0} = f_{0}\}} |\dot{u}_{0}|^{p}\te^{n}_{f_{0}}.
   \]
   where $v_{t}$ is the $\eta$-geodesic joining $v_{0}$ and $v_{1}$, and $u_{t}$ is the $\te$-geodesic joining $u_{0}$ and $u_{1}$. Since $\te \leq \eta$, $u_{0} \leq v_{0}$ and $u_{1} \leq v_{1}$, $u_{t}$ is the $\eta$-subgeodesic joining $v_{0}$ and $v_{1}$, therefore, $u_{t} \leq v_{t}$. 

   Since $u_{0} \leq v_{0} \leq f_{0}$, the set $\{u_{0} = f_{0}\} \subset \{v_{0} = f_{0}\}$. Moreover, when $x \in \{u_{0} = f_{0}\}$, 
   \[
   \dot{u}_{0}(x) = \lim_{t \to 0} \frac{u_{t}(x) -u_{0}(x)}{t} = \lim_{t \to 0}  \frac{u_{t}(x) - f_{0}(x)}{t} \leq \lim_{t\to 0} \frac{v_{t}(x) - f_{0}(x)}{t} = \lim_{t \to 0} \frac{v_{t}(x) - v_{0}(x)}{t} = \dot{v}_{0}(x).
   \]
   Due to the assumption that $f_{0} \leq f_{1}$, $0 \leq \dot{u}_{0}, \dot{v}_{0}$, thus we obtain that $|\dot{u}_{0}| \leq |\dot{v}_{0}|$. Again due to $\te \leq \eta$, we have $\te^{n}_{f_{0}} \leq \eta^{n}_{f_{0}}$. Combining these we get 
   \[
   \int_{\{u_{0} = f_{0}\}} |\dot{u}_{0}|^{p}\te^{n}_{f_{0}} \leq \int_{\{ v_{0} = f_{0}\}} |\dot{v}_{0}|^{p}\eta^{n}_{f_{0}}.
   \]
   Therefore, $d_{p}(u_{0}, u_{1}) \leq d_{p}(v_{0},v_{1})$. 

   More generally, if $f_{0}$ and $f_{1}$ are unrelated, we can use the Pythagorean identity (see Lemma~\ref{lem: Pythagorean identity}) to prove the general result. We can do this because of \cite[Lemma 4.1]{guptacompletegeodescimetricinbigclasses} implies that $P_{\te}(u_{0}, u_{1}) \in \cH_{\te}$ and the fact that $P_{\te}(P_{\eta}(v_{0},v_{1})) = P_{\te}(u_{0},u_{1})$. 
   \begin{align*}
   d_{p}^{p}(u_{0},u_{1}) &= d_{p}^{p}(u_{0}, P_{\te}(u_{0},u_{1})) + d_{p}^{p}(u_{1}, P_{\te}(u_{0},u_{1})) \\
   &\leq d_{p}^{p}(v_{0}, P_{\eta}(v_{0},v_{1})) + d_{p}^{p}(v_{1}, P_{\eta}(v_{0},v_{1}))\\
   &= d_{p}^{p}(v_{0}, v_{1}).
   \end{align*}
\end{proof}

\section{Lidskii type inequality} \label{sec: Lidskii type inequality}  In this section we will prove Theorem~\ref{thm: introduction lidskii type inequality}. The proof will follow the approximation process used to construct the metric $d_{p}$ on $\E^{p}(X,\te)$. First, we will prove the Lidskii-type inequality for the big and nef cohomology classes and then prove it for the big cohomology classes.

\begin{lem} \label{lem: Lidskii inequality for big and nef classes}
    If $\bb$ represents a big and nef cohomology class and $u,v,w \in \E^{p}(X,\bb)$ satisfy $u \geq v \geq w$, then 
    \[
    d_{p}^{p}(v,w) \leq d_{p}^{p}(u,w) - d_{p}^{p}(u,v).
    \]
\end{lem}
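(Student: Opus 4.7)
The plan is to reduce the inequality to the K\"ahler case via the Di~Nezza-Lu approximation scheme recalled in Section~\ref{sec: dp metric in big and nef}, and to invoke the K\"ahler Lidskii inequality of Darvas-Lu \cite[Corollary 3.2]{DarvasLuuniformconvexity}. Concretely, I would first establish the inequality for $u,v,w\in\cH_{\bb}$ and then extend it to $\E^{p}(X,\bb)$ by a monotone approximation using Lemma~\ref{lem: continuity under decreasing sequences}.

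The core step proceeds as follows. Given $u\geq v\geq w$ in $\cH_{\bb}$, pick $C^{1,\bar{1}}$ functions $f_{u}^{0},f_{v}^{0},f_{w}^{0}$ with $u=P_{\bb}(f_{u}^{0})$, $v=P_{\bb}(f_{v}^{0})$, $w=P_{\bb}(f_{w}^{0})$. Replacing $f_{v}^{0}$ by $\min(f_{v}^{0},f_{u}^{0})$ and $f_{w}^{0}$ by $\min(f_{w}^{0},f_{v}^{0},f_{u}^{0})$ preserves the envelope identities, since each new function still lies between the corresponding $\bb$-psh potential and the original $C^{1,\bar{1}}$ datum, so that $P_{\bb}$ is unchanged. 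After a standard smoothing of these $\min$'s, one obtains $C^{1,\bar{1}}$ functions $f_{u}\geq f_{v}\geq f_{w}$ with $P_{\bb}(f_{u})=u$, $P_{\bb}(f_{v})=v$, $P_{\bb}(f_{w})=w$ (up to an error that is absorbed in the final limit). Setting $u_{\ee}=P_{\om_{\ee}}(f_{u})$, $v_{\ee}=P_{\om_{\ee}}(f_{v})$, $w_{\ee}=P_{\om_{\ee}}(f_{w})$ with $\om_{\ee}=\bb+\ee\om$, the order-preserving property of the envelope yields $u_{\ee}\geq v_{\ee}\geq w_{\ee}$ in $\E^{p}(X,\om_{\ee})$. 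The K\"ahler Lidskii inequality of Darvas-Lu gives
\[
d_{p,\om_{\ee}}^{p}(v_{\ee},w_{\ee})\leq d_{p,\om_{\ee}}^{p}(u_{\ee},w_{\ee})-d_{p,\om_{\ee}}^{p}(u_{\ee},v_{\ee}),
\]
and by the very definition of $d_{p}$ on $\cH_{\bb}$ (Section~\ref{sec: dp metric in big and nef}), passing $\ee\to 0$ converts each term into the corresponding $d_{p,\bb}$ distance, proving the inequality for triples in $\cH_{\bb}$.

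For general $u\geq v\geq w$ in $\E^{p}(X,\bb)$, I would approximate by decreasing sequences in $\cH_{\bb}$ that respect the ordering. Starting from individual decreasing approximations $u^{(j)},v^{(j)},w^{(j)}\in\cH_{\bb}$ with $u^{(j)}\searrow u$, $v^{(j)}\searrow v$, $w^{(j)}\searrow w$, I would form the max-chain $u^{j}:=u^{(j)}\vee v^{(j)}\vee w^{(j)}$, $v^{j}:=v^{(j)}\vee w^{(j)}$, $w^{j}:=w^{(j)}$. These satisfy $u^{j}\geq v^{j}\geq w^{j}$ and decrease to $u,v,w$ respectively, and Lemma~\ref{lem: continuity under decreasing sequences} allows us to pass the established inequality from the approximating triples to the limit.

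The main obstacle is the combination of two competing constraints: the max-chain construction preserves the ordering but does not stay inside $\cH_{\bb}$, while the $\cH_{\bb}$-approximation is needed to invoke the K\"ahler-level Lidskii inequality. Resolving this requires a two-level argument: one first establishes the inequality within $\cH_{\bb}$ using the ordered $f_{u}\geq f_{v}\geq f_{w}$ representatives (the smoothing of $\min$ being the delicate technical point), and only then extends it to $\E^{p}(X,\bb)$ via the max-chain approximation, relying on the continuity of $d_{p}$ under decreasing sequences to close the argument.
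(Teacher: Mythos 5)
Your overall strategy is exactly the paper's: represent the three potentials by ordered $C^{1,\bar 1}$ obstacles $f_u\geq f_v\geq f_w$, pass to $u_\ee=\Poe(f_u)$ etc., apply the K\"ahler-case inequality of Darvas--Lu, let $\ee\to 0$ using the definition of $d_p$ on $\cH_\bb$, and then extend to $\E^p(X,\bb)$ by ordered decreasing approximation together with Lemma~\ref{lem: continuity under decreasing sequences}. Your treatment of the first step is in fact slightly more careful than the paper's (which simply asserts that ordered $C^{1,\bar 1}$ representatives exist): the observation that $v\leq\min(f_v^0,f_u^0)\leq f_v^0$ forces $P_\bb(\min(f_v^0,f_u^0))=v$ is correct, and a regularized minimum handles the loss of $C^{1,\bar 1}$ regularity at the crease, with the resulting $O(\delta)$ perturbation of the envelopes absorbed in the limit.

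The one step that does not work as written is the extension to $\E^p(X,\bb)$. You form the max-chain $u^j=u^{(j)}\vee v^{(j)}\vee w^{(j)}$, $v^j=v^{(j)}\vee w^{(j)}$, $w^j=w^{(j)}$, and you correctly note that these triples need not lie in $\cH_\bb$ (a maximum of envelopes $P_\bb(f)$ is generally not itself an envelope of a $C^{1,\bar 1}$ obstacle, since $\max$ of $C^{1,\bar 1}$ functions picks up a positive singular part of $dd^c$ along the crease). But then your "two-level argument" does not actually resolve this: step one only gives the inequality for ordered triples \emph{in} $\cH_\bb$, so it cannot be invoked for the max-chain triples, and the continuity lemma cannot close the argument without an inequality to pass to the limit. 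The fix is to impose the ordering at the level of the obstacles rather than the potentials, exactly as you already did in the first step: take smooth $F_j\searrow u$, $G_j\searrow v$, $H_j\searrow w$, and set $u^j=P_\bb(F_j)$, $v^j=P_\bb(\widetilde{\min}(F_j,G_j))$, $w^j=P_\bb(\widetilde{\min}(F_j,G_j,H_j))$ with a regularized minimum; the sandwich $v\leq\min(F_j,G_j)\leq G_j$ shows these still decrease to $u,v,w$, they are ordered, and they lie in $\cH_\bb$, so step one applies and Lemma~\ref{lem: continuity under decreasing sequences} finishes the proof.
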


\begin{proof}
    First, we assume that  $u,v, w \in \cH_{\bb}$. Let $\om_{\ee} = \bb + \ee\om$ represent a K\"ahler class. Let $u = P_{\bb}(f)$, $v = P_{\bb}(g)$, and $w = P_{\bb}(h)$ where $f,g,h \in C^{1,\bar{1}}(X)$ satisfy $f \geq g \geq h$. We denote by  $u_{\ee} = \Poe(f)$, $v_{\ee} = \Poe(g)$, and $w_{\ee} = \Poe(h)$. Since $u_{\ee} \geq v_{\ee} \geq w_{\ee}$ are in $\E^{p}(X,\om_{\ee})$, by \cite[Corollary 3.2]{DarvasLuuniformconvexity}, 
    \[
    d_{p}^{p}(v_{\ee}, w_{\ee}) \leq d_{p}^{p}(u_{\ee}, w_{\ee}) - d_{p}^{p}(u_{\ee}, v_{\ee}).
    \]
    \sloppy
    By the definition of $d_{p}$ on $\cH_{\bb}$ (see Section~\ref{sec: dp metric in big and nef}), we know that $\lim_{\ee \to 0} d_{p}(u_{\ee}, v_{\ee}) = d_{p}(u,v)$, $\lim_{\ee \to 0} d_{p}(u_{\ee}, w_{\ee}) = d_{p}(u,w)$, and $\lim_{\ee \to 0} d_{p}(v_{\ee}, w_{\ee}) = d_{p}(v,w)$. Thus taking the limit $\ee \to 0$ in the above equation we get 
    \[
    d_{p}^{p}(v,w) \leq d_{p}^{p}(u,w) - d_{p}^{p}(u,v).
    \]

    More generally, if $u,v,w \in \E^{p}(X,\bb)$, then we can find $u^{j}, v^{j}, w^{j} \in \cH_{\bb}$ such that $u^{j} \geq v^{j} \geq w^{j}$ and $u^{j} \searrow u$, $v^{j} \searrow v$, and $w^{j} \searrow w$ as $j \to \infty$. We just proved that 
    \[
    d_{p}^{p}(v^{j}, w^{j}) \leq d_{p}^{p}(u^{j}, w^{j}) - d_{p}^{p}(u^{j},v^{j}).
    \]
    Taking the limit $j \to \infty$ and using Lemma~\ref{lem: continuity under decreasing sequences}, we get 
    \[
    d_{p}^{p}(v,w) \leq d_{p}^{p} (u,w) - d_{p}^{p}(u,v).
    \]
    \end{proof}

    Before we can prove the Lidskii-type inequality for the big cohomology class, we need to show the result for the analytic singularity type. 
    \begin{lem}\label{lem: lidskii inequality for analytic singularity}
        If $\te$ represents a big cohomology class and $\psi$ is a $\te$-psh function with analytic singularity type, then for $u,v,w \in \E^{p}(X,\te, \psi)$ satisfying $u \geq v \geq w$, we have
        \[
        d_{p}^{p}(v,w) \leq d_{p}^{p}(u,w) - d_{p}^{p}(u,v).
        \]
    \end{lem}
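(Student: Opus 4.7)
The plan is to transfer the inequality from $\E^{p}(X,\te,\psi)$ to $\E^{p}(\tilde{X},\tilde{\theta})$ via Hironaka's resolution and then apply Lemma~\ref{lem: Lidskii inequality for big and nef classes}. Since $\psi$ has analytic singularities, Section~\ref{sec: dp metric in analytic singularity} gives a modification $\mu : \tilde{X} \to X$, a smooth closed real $(1,1)$-form $\tilde{\theta}$ representing a big and nef class on $\tilde{X}$, and a bounded $\tilde{\theta}$-psh function $g$ such that the correspondence
\[
\PSH(X,\te,\psi) \ni u \longmapsto \tilde{u} = (u-\psi)\circ \mu + g \in \PSH(\tilde{X},\tilde{\theta})
\]
is an order-preserving bijection restricting to a bijection $\E^{p}(X,\te,\psi) \to \E^{p}(\tilde{X},\tilde{\theta})$, and the $d_{p}$ distance on $\E^{p}(X,\te,\psi)$ is \emph{defined} by Equation~\eqref{eq: equation for dp in the new paper} to agree with the $d_{p}$ distance on $\E^{p}(\tilde{X},\tilde{\theta})$.

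With this in hand, the argument is short. Given $u,v,w \in \E^{p}(X,\te,\psi)$ with $u \geq v \geq w$, the order-preserving property of the correspondence yields $\tilde{u} \geq \tilde{v} \geq \tilde{w}$ in $\E^{p}(\tilde{X},\tilde{\theta})$. Since $\tilde{\theta}$ is big and nef, Lemma~\ref{lem: Lidskii inequality for big and nef classes} applies on $\tilde{X}$ and gives
\[
d_{p}^{p}(\tilde{v},\tilde{w}) \leq d_{p}^{p}(\tilde{u},\tilde{w}) - d_{p}^{p}(\tilde{u},\tilde{v}).
\]
Translating each term back through \eqref{eq: equation for dp in the new paper} yields the claimed inequality on $\E^{p}(X,\te,\psi)$.

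There is no real obstacle: both required inputs (the bijection preserving order and $d_{p}$, and the Lidskii inequality in the big and nef case) have been set up in the preliminaries and in Lemma~\ref{lem: Lidskii inequality for big and nef classes} respectively. The only thing to verify carefully, which is immediate from the definition $\tilde{u} = (u-\psi)\circ \mu + g$, is that $u \geq v$ if and only if $\tilde{u} \geq \tilde{v}$; this is clear since $\mu$ is surjective and $\psi, g$ are subtracted/added uniformly in the formula. Thus the lemma reduces to the already established big and nef case.
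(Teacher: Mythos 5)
Your proposal is correct and follows exactly the paper's own argument: transfer to $\E^{p}(\tilde{X},\tilde{\theta})$ via the order-preserving bijection $u \mapsto (u-\psi)\circ\mu + g$ from the resolution of singularities, apply Lemma~\ref{lem: Lidskii inequality for big and nef classes} in the big and nef class $\tilde{\theta}$, and pull the inequality back using the definition of $d_{p}$ in Equation~\eqref{eq: equation for dp in the new paper}. No gaps.
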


    \begin{proof}
        Let $\mu : \tX \to X$ be the modification that resolves the singularities of $\psi$. As discussed in Section~\ref{sec: dp metric in analytic singularity} there is a smooth closed real $(1,1)$-form $\tte$ representing a big and nef cohomology class, and a bounded $\tte$-psh function $g$ on $\tX$ such that $\E^{p}(X,\te, \psi) \ni u \mapsto \tilde{u} = (u-\psi) \circ \mu + g \in \E^{p}(\tX, \tte)$ is a bijection. 

        Since $\tilde{u}, \tilde{v}, \tilde{w} \in \E^{p}(\tX, \tte)$ satisfy $\tilde{u} \geq \tilde{v} \geq \tilde{w}$, by Lemma~\ref{lem: Lidskii inequality for big and nef classes} we have 
        \[
        d_{p}^{p}(\tilde{v}, \tilde{w}) \leq d_{p}^{p} (\tilde{u}, \tilde{w}) - d_{p}^{p}(\tilde{u}, \tilde{v}).
        \]
        By Equation~\eqref{eq: equation for dp in the new paper}, we can write
        \[
        d_{p}^{p}(v,w) \leq d_{p}^{p}(u,w) - d_{p}^{p}(u,v).
        \]
    \end{proof}

    Now we can prove the Lidskii-type inequality for the big classes.
    \begin{thm} \label{thm: Lidskii inequality for big classes}
        Let $\te$ represent a big cohomology class. If $u,v,w \in \E^{p}(X,\te)$ satisfy $u \geq v \geq w$, then 
        \[
        d_{p}^{p}(v,w) \leq d_{p}^{p}(u,w) - d_{p}^{p}(u,v).
        \]
    \end{thm}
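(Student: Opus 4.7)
The plan is to bootstrap from the analytic singularity case (Lemma~\ref{lem: lidskii inequality for analytic singularity}) to the general big class using Demailly's regularization together with the defining approximation formula for $d_p$ in Equation~\eqref{eq: definition of dp for big class}. The strategy mirrors the approximation scheme already used throughout Section~\ref{sec: preliminaries}: project the given triple into spaces with prescribed analytic singularity, invoke the lemma already proved there, and pass to the limit.

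First, pick an increasing Demailly sequence $\psi_k \nearrow V_{\te}$ of $\te$-psh functions with analytic singularities. For each $k$, consider the envelope operator $P_{\te}[\psi_k](\cdot)$ from Equation~\eqref{eq: envelope with arbitrary singularity type}. This operator is monotone in its argument: directly from the supremum definition, $f_1 \leq f_2$ implies $P_{\te}[\psi_k](f_1) \leq P_{\te}[\psi_k](f_2)$. Applied to the hypothesis $u \geq v \geq w$ in $\E^p(X,\te)$, this yields
\[
u_k := P_{\te}[\psi_k](u) \; \geq \; v_k := P_{\te}[\psi_k](v) \; \geq \; w_k := P_{\te}[\psi_k](w),
\]
with each of $u_k, v_k, w_k$ lying in $\E^p(X,\te,\psi_k)$ (this membership is already recorded in the author's previous paper, and is the same ingredient that makes Equation~\eqref{eq: definition of dp for big class} well posed).

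Next, because $\psi_k$ has analytic singularities, Lemma~\ref{lem: lidskii inequality for analytic singularity} applies to the chain $u_k \geq v_k \geq w_k$ inside $\E^p(X,\te,\psi_k)$, giving
\[
d_p^p(v_k, w_k) \; \leq \; d_p^p(u_k, w_k) - d_p^p(u_k, v_k).
\]
Letting $k \to \infty$ and using Equation~\eqref{eq: definition of dp for big class}, each of the three distances $d_p(u_k,v_k)$, $d_p(u_k,w_k)$, $d_p(v_k,w_k)$ converges to the corresponding distance $d_p(u,v)$, $d_p(u,w)$, $d_p(v,w)$ in $\E^p(X,\te)$, and the stated inequality follows in the limit.

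I do not anticipate any serious obstacle: the monotonicity of $P_{\te}[\psi_k]$ is formal, the key nontrivial inequality is already proved at the analytic singularity level in Lemma~\ref{lem: lidskii inequality for analytic singularity}, and the convergence step is exactly the defining property of $d_p$ on $\E^p(X,\te)$. The only point that requires a brief check is that the envelopes $u_k, v_k, w_k$ actually land in $\E^p(X,\te,\psi_k)$ so that the analytic-singularity lemma is applicable; this is standard from the approximation scheme of Section~\ref{sec: dp metric in big} and can be cited directly from \cite{guptacompletegeodescimetricinbigclasses}.
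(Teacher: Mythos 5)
Your proposal is correct and follows essentially the same route as the paper's own proof: project via $P_{\te}[\psi_k]$ onto the analytic singularity spaces, apply Lemma~\ref{lem: lidskii inequality for analytic singularity} to the monotone chain of envelopes, and pass to the limit using Equation~\eqref{eq: definition of dp for big class}. The extra remarks you make (monotonicity of the envelope, membership in $\E^{p}(X,\te,\psi_k)$) are exactly the points the paper implicitly relies on.
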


    \begin{proof}
        As discussed in Section~\ref{sec: dp metric in big}, let $\psi_{k}$ be an increasing sequence of $\te$-psh functions with analytic singularities such that $\psi_{k} \nearrow V_{\te}$ a.e. We denote by $u^{k} = P_{\te}[\psi_{k}](u)$, $v^{k} = P_{\te}[\psi_{k}](v)$, and $w^{k} = P_{\te}[\psi_{k}](w)$. Since $u \geq v \geq w$, we have $u^{k} \geq v^{k} \geq w^{k}$. By Lemma~\ref{lem: lidskii inequality for analytic singularity}, 
        \[
        d_{p}^{p}(v^{k}, w^{k}) \leq d_{p}^{p} (u^{k}, w^{k}) - d_{p}^{p}(u^{k}, v^{k}).
        \]
        Notice that the metric $d_{p}$ applied to $u^{k}, v^{k}, w^{k}$ is the metric on the space $\E^{p}(X,\te, \psi_{k})$, whereas the metric applied to $u,v,w$ is the metric on the space $\E^{p}(X,\te)$.
        Taking the limit $k \to \infty$ and using Equation~\eqref{eq: definition of dp for big class} we get 
        \[
        d_{p}^{p}(v,w) \leq d_{p}^{p}(u,w) - d_{p}^{p}(u,v).
        \]
    \end{proof}

\section{Buseman Convexity of $\E^{p}(X,\te)$} \label{sec: convexity}

In this section, we will prove Buseman convexity of the metric space $(\E^{p}(X,\te), d_{p})$ for $p \geq 1$ where $\te$ represents a big cohomology class. This means that we will prove that if $u_{0}, u_{1}, v_{0}, v_{1} \in \E^{p}(X,\te)$, and $u_{t}$ and $v_{t}$ are geodesic joining $u_{0},u_{1}$, and $v_{0},v_{1}$ respectively, then 
\[
d_{p}(u_{t},v_{t}) \leq (1-t)d_{p}(u_{0},v_{0}) + td_{p}(u_{1},v_{1}).
\]

\subsection{Buseman convexity for $p = 1$}\label{sec: buseman convexity for p = 1}
The Buseman convexity for $p =1$ follows from the arguments in \cite[Proposition 5.1]{BDL17}. We reproduce the arguments here for completeness. 

\begin{thm}\label{thm: Buseman convexity for p =1}
    If $\te$ represents a big cohomology class, $u_{0}, u_{1}, v_{0}, v_{1} \in \E^{p}(X,\te)$, and $u_{t}$ and $v_{t}$ are the weak geodesics joining $u_{0}$,$u_{1}$, and $v_{0}$, $v_{1}$ respectively, then 
    \[
    d_{1}(u_{t}, v_{t}) \leq (1-t)d_{1}(u_{0},v_{0}) + td_{1}(u_{1},v_{1}).
    \]
\end{thm}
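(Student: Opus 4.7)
The plan is to reproduce the proof of Berman--Darvas--Lu [BDL17, Proposition 5.1] in the big setting, now that the required tools are available. The essential inputs are: the Pythagorean identity at $p=1$ (Lemma~\ref{lem: Pythagorean identity}), the Lidskii-type inequality (Theorem~\ref{thm: Lidskii inequality for big classes}), the existence of the Monge--Amp\`ere energy functional $I$ on $\E^{1}(X,\te)$ together with the representation $d_{1}(u,v) = I(u)+I(v)-2I(P_{\te}(u,v))$, monotonicity of $I$, and the fact that $I$ is affine along weak geodesics. These last three ingredients can be imported from \cite{darvas2021l1} and \cite{guptacompletegeodescimetricinbigclasses} (in the case of minimal-singularity geodesics) and extended by approximation.

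Concretely, I would introduce the \emph{rooftop geodesic} $z_{t}$, namely the weak geodesic in $\E^{1}(X,\te)$ joining $P_{\te}(u_{0},v_{0})$ and $P_{\te}(u_{1},v_{1})$. A maximum-principle argument on the defining envelope (standard once one approximates by minimal-singularity endpoints via Theorem~\ref{thm: approximting with minimal singularity type} and passes to the limit using Lemma~\ref{lem: endpoint stability}) yields $z_{t} \leq u_{t}$ and $z_{t} \leq v_{t}$ for every $t \in [0,1]$, so that $z_{t} \leq P_{\te}(u_{t},v_{t})$. Using the energy representation of $d_{1}$ at time $t$ together with monotonicity of $I$,
\[
d_{1}(u_{t},v_{t}) \;=\; I(u_{t})+I(v_{t}) - 2I(P_{\te}(u_{t},v_{t})) \;\leq\; I(u_{t})+I(v_{t}) - 2I(z_{t}).
\]
Since $I$ is affine along each of $u_{t}$, $v_{t}$, and $z_{t}$, expanding and regrouping yields
\[
I(u_{t})+I(v_{t}) - 2I(z_{t}) \;=\; (1-t)\bigl[I(u_{0})+I(v_{0})-2I(P_{\te}(u_{0},v_{0}))\bigr] + t\bigl[I(u_{1})+I(v_{1})-2I(P_{\te}(u_{1},v_{1}))\bigr],
\]
which is exactly $(1-t)d_{1}(u_{0},v_{0}) + t\,d_{1}(u_{1},v_{1})$. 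The Lidskii-type inequality will be invoked implicitly in justifying $d_{1}(u,v) = I(v)-I(u)$ for $u \leq v$, which together with Pythagorean gives the energy representation above.

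The main obstacle is verifying the two energy identities in the big setting — namely $d_{1}(u,v) = I(u)+I(v)-2I(P_{\te}(u,v))$ and the affinity of $I$ along weak geodesics — without restricting to minimal-singularity endpoints. Both should be accessible by approximating $u_{0}, u_{1}, v_{0}, v_{1}$ by minimal-singularity (or analytic-singularity) potentials through Theorem~\ref{thm: approximting with minimal singularity type}, invoking the identities in that setting where they are known, and passing to the limit using the $d_{1}$-continuity properties recorded in Lemma~\ref{lem: continuity under decreasing sequences} and Lemma~\ref{lem: endpoint stability}. The comparison $z_{t} \leq u_{t}, v_{t}$ is less delicate but still requires the same approximation since weak geodesics in the big setting are accessed only through their minimal-singularity analogues; once these points are addressed, the computation is identical to the K\"ahler case.
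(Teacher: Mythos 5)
Your proposal is correct and follows essentially the same route as the paper, which itself reproduces the argument of \cite[Proposition 5.1]{BDL17}: compare the geodesic joining the rooftop envelopes of the endpoints with $P_{\te}(u_t,v_t)$ via the comparison principle, then use monotonicity and affinity of $I$ together with the formula $d_1(u,v)=I(u)+I(v)-2I(P_{\te}(u,v))$ (which the paper imports directly from \cite[Theorem 5.7]{guptacompletegeodescimetricinbigclasses} rather than re-deriving it from the Lidskii inequality and the Pythagorean identity). The only cosmetic difference is that the paper runs the rooftop-geodesic comparison over an arbitrary subinterval $[a,b]$ to conclude that $t\mapsto I(P_{\te}(u_t,v_t))$ is concave and hence $t\mapsto d_1(u_t,v_t)$ is convex, whereas you take $a=0$, $b=1$ directly.
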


\begin{proof}
    Fix $a\leq  b \in [0,1]$. Let $w_{a} := P_{\te}(u_{a}, v_{a})$ and $w_{b} = P_{\te}(u_{b}, v_{b})$. Let $[a,b] \ni t \mapsto w_{t} \in \E^{p}(X,\te)$ be the geodesic joining $w_{a}$ and $w_{b}$. Since $w_{a} \leq u_{a}, v_{a}$ and $w_{b} \leq u_{b}, v_{b}$, the comparison principle for geodesics implies that $w_{t} \leq u_{t}, v_{t}$ for all $ t \in [a,b]$. Thus $w_{t} \leq P_{\te}(u_{t}, v_{t})$. By monotonicity of $I$ (see \cite[Theorem 2.4 and Proposition 2.5]{darvas2021l1}, we get that $I(w_{t}) \leq I(P_{\te}(u_{t},v_{t})) $. As $I$ is linear along geodesics, we have  $I(P_{\te}(u_{t}, v_{t})) \geq I(w_{t}) = \frac{b-t}{b-a}I(P_{\te}(u_{a},v_{a})) + \frac{t-a}{b-a}I(P_{\te}(u_{b},v_{b}))$ for all $t \in [a,b]$. Thus $t \mapsto I(P_{\te}(u_{t},v_{t}))$ is a concave map. 

    By \cite[Theorem 5.7]{guptacompletegeodescimetricinbigclasses}, $d_{1}(u_{t}, v_{t}) = I(u_{t}) + I(v_{t}) - 2I(P_{\te}(u_{t},v_{t})$, thus it agrees with the metric introduced in \cite{darvas2021l1}. As $t \mapsto I(u_{t})$ and $t \mapsto I(v_{t})$ are linear and $t \mapsto I(P_{\te}(u_{t},v_{t}))$ is concave, we obtain that $t \mapsto d_{1}(u_{t},v_{t})$ is convex. 
\end{proof}

Thus, we only need to prove Buseman convexity for $p > 1$. This is great because now we can use uniform convexity of $(\E^{p}(X,\te), d_{p})$ from Theorem~\ref{thm: uniform convexity}. First, we will prove Buseman convexity for the big and nef case, and then prove the Buseman convexity for the big classes.

\subsection{Buseman convexity for big and nef classes when $p >  1$} \label{sec: buseman convexity for big and nef}
Fix a smooth closed real $(1,1)$-form $\bb$ representing a big and nef cohomology class and fix a $ p > 1$.  Recall from Section~\ref{sec: dp metric in big and nef} that $\om_{\ee} = \bb + \ee \om$ represents a K\"ahler class. Let $u_{0}, u_{1}, v_{0}, v_{1} \in \cH_{\bb}$ be given by $u_{0} = P_{\bb}(f_{0})$, $u_{1} = P_{\bb}(f_{1})$, $v_{0} = P_{\bb}(g_{0})$, and $v_{1} = P_{\bb}(g_{1})$ where $f_{0}, f_{1}, g_{0}, g_{1} \in C^{1,\bar{1}}(X)$. Let $u_{0,\ee} = \Poe(f_{0})$, $u_{1,\ee} = \Poe(f_{1})$, $v_{0,\ee} = \Poe(g_{0})$, and $v_{1,\ee} = \Poe(g_{1})$. Let $u_{t,\ee}$ and $v_{t,\ee}$ be the $\om_{\ee}$-geodesic joining $u_{0,\ee},u_{1,\ee}$, and $v_{0,\ee},v_{1,\ee}$ respectively. 

By the Buseman convexity in the K\"ahler case, we obtain that 
\[
d_{p}(u_{t,\ee}, v_{t,\ee}) \leq (1-t) d_{p}(u_{0,\ee}, v_{0,\ee}) + t d_{p}(u_{1,\ee}, v_{1,\ee}).
\]

\sloppy
By construction of $d_{p}$ on $\E^{p}(X,\bb)$, we know that $\lim_{\ee \to 0} d_{p}(u_{0,\ee}, v_{0,\ee}) = d_{p}(u_{0},v_{0})$ and $\lim_{\ee \to 0} d_{p}(u_{1,\ee}, v_{1,\ee}) = d_{p}(u_{1},v_{1})$. Thus to show Buseman convexity for potentials in $\cH_{\bb}$, it is enough to show that 
\[
d_{p}(u_{t},v_{t}) \leq \lim_{\ee \to 0} d_{p}(u_{t,\ee}, v_{t,\ee}),
\]
where $u_{t}$ and $v_{t}$ are the $\bb$-geodesics joining $u_{0}, u_{1}$ and $v_{0}, v_{1}$ respectively.

\begin{lem} \label{lem: projection getting closer property from Kahler to nef}
    Let $u_{0}, u_{1} \in \cH_{\bb}$ and $u_{t}$ be the geodesic joining them. Let $u_{0,\ee}, u_{1,\ee}$ be as above and $u_{t,\ee}$ be the geodesic joining $u_{0,\ee}$ and $u_{1,\ee}$. Then for $p > 1$, 
    \[
    \lim_{\ee \to 0} d_{p}(u_{t}, P_{\bb}(u_{t,\ee})) = 0.
    \]
\end{lem}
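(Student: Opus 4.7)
The idea is to show that the curve $t \mapsto w_{t,\ee} := P_{\bb}(u_{t,\ee})$ in $\cH_{\bb}$ connects $u_0$ to $u_1$ with speed at most $d_p(u_{0,\ee},u_{1,\ee})$, which (by the construction of $d_p$ in big and nef classes) tends to $d_p(u_0,u_1)$ as $\ee\to 0$. Thus $w_{t,\ee}$ is an almost-midpoint type perturbation of the true $\bb$-geodesic $u_t$, and uniform convexity of $(\E^p(X,\bb),d_p)$ (Corollary~\ref{cor: uniform convexity}) will force $w_{t,\ee} \to u_t$ in $d_p$.

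First I would verify that the contraction property applies to $P_\bb$ along $u_{t,\ee}$. Since $u_{0,\ee} = \Poe(f_0)$ and $u_{1,\ee}=\Poe(f_1)$ lie in $\cH^{1,\bar{1}}_{\om_\ee}$ by Lemma~\ref{lem: envelope for eta}, Lemma~\ref{lem: C11bar regularity for eta} gives $u_{t,\ee}\in C^{1,\bar{1}}(X)$, so $u_{t,\ee}=P_{\om_\ee}(u_{t,\ee})\in \cH_{\om_\ee}$. Also, using $\bb\leq \om_\ee$ one checks $P_\bb(\Poe(f_i)) = P_\bb(f_i) = u_i$ (the envelope $P_\bb(f_i)$ is already $\om_\ee$-psh and $\leq f_i$, so it lies below $\Poe(f_i)$; the reverse inequality is immediate), whence $w_{0,\ee}=u_0$ and $w_{1,\ee}=u_1$. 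Now Lemma~\ref{lem: a general contraction property} applied to $\bb\leq \om_\ee$ yields, for all $s,t\in[0,1]$,
\[
d_p(w_{s,\ee},w_{t,\ee}) \;\leq\; d_p(u_{s,\ee},u_{t,\ee}) \;=\; |s-t|\,D_\ee,
\]
where $D_\ee := d_p(u_{0,\ee},u_{1,\ee})$ and I used that $u_{\cdot,\ee}$ is a constant-speed $d_p$-geodesic in $\E^p(X,\om_\ee)$. By the construction recalled in Section~\ref{sec: dp metric in big and nef}, $D_\ee \to D := d_p(u_0,u_1)$ as $\ee\to 0$, and the same contraction applied at the endpoints shows $D\leq D_\ee$.

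If $D=0$, then $u_t\equiv u_0$ and $d_p(u_t,w_{t,\ee}) = d_p(w_{0,\ee},w_{t,\ee}) \leq t D_\ee \to 0$, as desired. Otherwise, set $\ee' := (D_\ee - D)/D \geq 0$, so $\ee'\to 0^+$, and the Lipschitz bound above yields
\[
d_p(w_{t,\ee},u_0) \leq t D_\ee = (t+t\ee')D \leq (t+\ee')D,\qquad d_p(w_{t,\ee},u_1) \leq ((1-t)+\ee')D.
\]
Applying Corollary~\ref{cor: uniform convexity} with $u=w_{t,\ee}$, $v_0=u_0$, $v_1=u_1$, $\la=t$ (allowing $\ee'$ to be replaced by any slightly larger positive number if $\ee'=0$) gives
\[
d_p(w_{t,\ee},u_t) \;\leq\; C(\ee')^{1/r} D,\qquad r=\max\{2,p\},
\]
which tends to $0$ as $\ee\to 0$.

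The main obstacle is matching the quantitative data of the contraction argument with the hypotheses of uniform convexity: the contracted curve $w_{t,\ee}$ has \emph{exactly} the right endpoints $u_0, u_1$ but only an \emph{approximately} correct speed $D_\ee\approx D$, and Corollary~\ref{cor: uniform convexity} is precisely the tool that turns such a bi-Lipschitz comparison with the geodesic endpoints into an actual $d_p$-approximation of the geodesic point $u_t$. Here the assumption $p>1$ enters decisively, since Buseman convexity alone would not give quantitative convergence to the geodesic.
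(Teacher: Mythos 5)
Your proof is correct, but it takes a genuinely different route from the paper's. The paper argues by monotonicity: it shows $u_{0,\ee}\searrow u_{0}$, $u_{1,\ee}\searrow u_{1}$, deduces by the comparison principle for subgeodesics that $u_{t,\ee}\searrow u_{t}$, hence $u_{t,\ee}\geq P_{\bb}(u_{t,\ee})\geq u_{t}$ forces $P_{\bb}(u_{t,\ee})\searrow u_{t}$, and then concludes by $d_{p}$-continuity along decreasing sequences (Lemma~\ref{lem: continuity under decreasing sequences}). You instead exploit the contraction property (Lemma~\ref{lem: a general contraction property}) to show that $w_{t,\ee}=P_{\bb}(u_{t,\ee})$ is a curve with the exact endpoints $u_{0},u_{1}$ (your verification that $P_{\bb}(\Poe(f_{i}))=P_{\bb}(f_{i})$ is correct) and Lipschitz constant $D_{\ee}\to D$, and then invoke uniform convexity (Corollary~\ref{cor: uniform convexity}) to force $w_{t,\ee}$ onto the geodesic; the hypotheses of that corollary are indeed satisfied with $\ee'=(D_{\ee}-D)/D$, and the contraction at the endpoints guarantees $\ee'\geq 0$. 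Your approach buys a quantitative rate $d_{p}(u_{t},w_{t,\ee})\leq C(\ee')^{1/r}D$ and avoids having to establish monotone convergence of the family $u_{t,\ee}$, but it genuinely requires $p>1$; the paper's qualitative argument is softer and would in fact work verbatim for $p=1$ as well (the restriction $p>1$ in the statement is not used in the paper's proof). The only point worth flagging is that you implicitly use that the constant $C$ in Corollary~\ref{cor: uniform convexity} is uniform (depending only on $p$), which is how that corollary is meant to be read, and that $u_{s,\ee},u_{t,\ee}\in\cH_{\om_{\ee}}$, which you correctly justify via Lemmas~\ref{lem: envelope for eta} and~\ref{lem: C11bar regularity for eta}.
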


\begin{proof}

    We claim that $P_{\bb}(u_{t,\ee}) \searrow u_{t}$.  

    First notice that $u_{0, \ee} \searrow u_{0}$ and $u_{1,\ee} \searrow u_{1}$. Recall that by definition, $u_{0,\ee} = \Poe(f_{0})$ and $u_{0} = P_{\bb}(f_{0})$ for $f_{0} \in C^{1,\bar{1}}$. If $\ee_{1} < \ee_{2}$, then $u_{0,\ee_{1}}$ is $\om_{\ee_{2}}$-psh as well and $u_{0,\ee_{1}} \leq f_{0}$, therefore, $u_{0,\ee_{1}} \leq u_{0,\ee_{2}}$. By a similar argument $u_{0,\ee} \geq u_{0}$ as well. Therefore, $u_{0,\ee}$ are increasing sequence of functions, and $\lim_{\ee \to 0} u_{0,\ee} \geq u_{0}$. The limit $\lim_{\ee \to 0}u_{0,\ee}$ is a $\bb + \ee\om$-psh for every $\ee > 0$, therefore, $\lim_{\ee \to 0} u_{0,\ee}$ is a $\bb$-psh function that satisfies $\lim_{\ee \to 0} u_{0,\ee} \leq f_{0}$, thus $\lim_{\ee \to 0} u_{0,\ee} \leq u_{0}$. Thus $\lim_{\ee \to 0 } u_{0,\ee} = u_{0}$.   Similarly, $u_{1.\ee} \searrow u_{1}$. 

    We can make the same argument for the $\om_{\ee}$-geodesics $u_{t,\ee}$ joining $u_{0,\ee}$ and $u_{1,\ee}$. If $\ee_{1} \leq \ee_{2}$, then the path $(0,1) \ni t \mapsto u_{t,\ee_{1}}$ is an $\om_{\ee_{2}}$-subgeodesic joining $u_{0,\ee_{2}}$ and $u_{1,\ee_{2}}$. Therefore, $u_{t,\ee_{1}} \leq u_{t,\ee_{2}}$. Similarly, $u_{t,\ee} \geq u_{t}$. Therefore, $(0,1) \ni t \mapsto u_{t,\ee}$  is a decreasing sequence of paths and $\lim_{\ee \to 0} u_{t,\ee} \geq u_{t}$. The limit path $(0,1) \ni t \mapsto \lim_{\ee \to 0} u_{t,\ee}$ is a $\bb + \ee\om$-subgeodesic joining $u_{0}$ and $u_{1}$ for each $\ee > 0$, therefore it is a $\bb$-subgeodesic as well. Hence $\lim_{\ee \to 0} u_{t,\ee} \leq u_{t}$. So we get $\lim_{\ee \to 0} u_{t,\ee} = u_{t}$. 

    Since $u_{t,\ee} \searrow u_{t}$ and $u_{t,\ee} \geq P_{\bb}(u_{t,\ee}) \geq u_{t}$, we have that $P_{\bb}(u_{t,\ee}) \searrow u_{t}$ as well. Thus proving the claim. Now the proof of the lemma follows from Lemma~\ref{lem: continuity under decreasing sequences}.

\end{proof}

Similarly for $p > 1$, $\lim_{\ee \to 0}d_{p}(v_{t}, P_{\bb}(v_{t,\ee})) = 0$. By the triangle inequality, 
\[
\lim_{\ee \to 0} d_{p}(P_{\bb}(u_{t,\ee}), P_{\bb}(v_{t,\ee})) = d_{p}(u_{t}, v_{t}).
\]

 Since $f_{0}, f_{1} \in C^{1,\bar{1}}(X)$,  Lemma~\ref{lem: envelope for eta} implies that $u_{0,\ee},u_{1,\ee} \in C^{1,\bar{1}}(X)$. Thus Lemma~\ref{lem: C11bar regularity for eta} implies that the geodesic $u_{t,\ee} \in C^{1,\bar{1}}(X)$ as well. Similarly, the geodesic $v_{t,\ee} \in C^{1,\bar{1}}(X)$. Thus we can apply Lemma~\ref{lem: a general contraction property}, to get $d_{p}(P_{\bb}(u_{t,\ee}), P_{\bb}(v_{t,\ee})) \leq d_{p}(u_{t,\ee}, v_{t,\ee})$. Combining this with the above equation we get 
\[
d_{p}(u_{t}, v_{t}) = \lim_{\ee \to 0} d_{p}(P_{\bb}(u_{t,\ee}), P_{\bb}(v_{t,\ee})) \leq \lim_{\ee \to 0} d_{p}(u_{t,\ee}, v_{t, \ee})
\]
as we needed to show. Thus we have proved
\begin{lem}\label{lem: buseman convexity for big and nef in cHbb}
    Let  $\bb$ represent a big and nef cohomology class and $p > 1$. Let  $u_{0}, u_{1}, v_{0}, v_{1} \in \cH_{\bb}$. If $u_{t}$ and $v_{t}$ are the geodesics joining $u_{0}, u_{1}$ and $v_{0}, v_{1}$ respectively, then 
    \[
    d_{p}(u_{t}, v_{t}) \leq (1-t) d_{p}(u_{0},v_{0}) + td_{p}(u_{1},v_{1}). 
    \]
\end{lem}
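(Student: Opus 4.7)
The plan is to deduce the inequality from the Kähler case by approximating $\bb$ with the genuinely Kähler forms $\om_{\ee} = \bb + \ee\om$ and carefully controlling how geodesics and $d_p$-distances behave as $\ee \to 0$. Writing $u_i = P_\bb(f_i)$, $v_i = P_\bb(g_i)$ with $f_i, g_i \in C^{1,\bar{1}}(X)$, I would set $u_{i,\ee} = P_{\om_{\ee}}(f_i)$, $v_{i,\ee} = P_{\om_{\ee}}(g_i)$ and let $u_{t,\ee}$, $v_{t,\ee}$ denote the corresponding $\om_{\ee}$-geodesics. Chen--Cheng's Buseman convexity in the Kähler class $\{\om_{\ee}\}$ (established in \cite{chenandchengcscKIII}) immediately gives
\[
d_{p}(u_{t,\ee}, v_{t,\ee}) \leq (1-t)\, d_{p}(u_{0,\ee}, v_{0,\ee}) + t\, d_{p}(u_{1,\ee}, v_{1,\ee}).
\]
By the very definition of $d_{p}$ on $\cH_{\bb}$ from Section~\ref{sec: dp metric in big and nef}, the right-hand side converges to $(1-t)d_{p}(u_{0},v_{0}) + td_{p}(u_{1},v_{1})$ as $\ee \to 0$.

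The main step is to control the left-hand side by $d_{p}(u_{t},v_{t})$ in the limit. I would first establish the monotone convergences $u_{t,\ee} \searrow u_{t}$ and $v_{t,\ee} \searrow v_{t}$ from above: at the endpoints this is a standard envelope comparison (since the $\om_{\ee}$-psh cone grows with $\ee$), and propagating this through the geodesic construction via the comparison principle for subgeodesics gives a squeeze between the $\om_{\ee}$-geodesic and the $\bb$-geodesic whose $\ee \to 0$ limit is forced to equal $u_{t}$ (and similarly $v_t$) by upper semi-continuous regularization.

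Next, because $f_i, g_i \in C^{1,\bar{1}}(X)$, Lemma~\ref{lem: envelope for eta} shows that $u_{i,\ee}, v_{i,\ee} \in C^{1,\bar{1}}(X)$, and Lemma~\ref{lem: C11bar regularity for eta} in turn upgrades the weak $\om_{\ee}$-geodesics $u_{t,\ee}, v_{t,\ee}$ to $C^{1,\bar{1}}$ potentials; in particular they lie in $\cH_{\om_{\ee}}$. This is precisely the hypothesis needed to apply the contraction property Lemma~\ref{lem: a general contraction property} along the inclusion $\bb \leq \om_{\ee}$, yielding
\[
d_{p}\bigl(P_{\bb}(u_{t,\ee}),\, P_{\bb}(v_{t,\ee})\bigr) \leq d_{p}(u_{t,\ee}, v_{t,\ee}).
\]
Since $u_{t} \leq P_{\bb}(u_{t,\ee}) \leq u_{t,\ee}$ and $u_{t,\ee} \searrow u_{t}$ (and likewise for $v$), the sequences $P_{\bb}(u_{t,\ee}), P_{\bb}(v_{t,\ee})$ also decrease to $u_{t}, v_{t}$, so Lemma~\ref{lem: continuity under decreasing sequences} gives $d_{p}(P_{\bb}(u_{t,\ee}), P_{\bb}(v_{t,\ee})) \to d_{p}(u_{t}, v_{t})$. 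Combining the three steps and letting $\ee \to 0$ yields the claim.

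The main obstacle I anticipate is exactly the regularity issue: without knowing $u_{t,\ee}, v_{t,\ee} \in \cH_{\om_{\ee}}$ we cannot feed them into the contraction Lemma~\ref{lem: a general contraction property}, and the approximation collapses. Chasing the regularity through Berman's envelope theory and He's $C^{1,\bar{1}}$-regularity of Kähler geodesics is therefore the technical crux; everything else is comparison principles and monotone limits. A secondary point to be careful about is that the contraction Lemma requires $\te \leq \eta$ to be genuinely smooth forms (which is why we use $\om_{\ee}$ rather than trying to descend to $\bb$ directly at the level of singular comparison), and this is harmless here since $\bb \leq \om_{\ee}$ are both smooth.
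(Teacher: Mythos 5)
Your proposal is correct and follows essentially the same route as the paper: Kähler-case Buseman convexity for $\om_{\ee}$, monotone convergence $u_{t,\ee}\searrow u_{t}$ via envelope and subgeodesic comparison, the $C^{1,\bar 1}$ regularity from Lemmas~\ref{lem: envelope for eta} and~\ref{lem: C11bar regularity for eta} to enable the contraction Lemma~\ref{lem: a general contraction property}, and Lemma~\ref{lem: continuity under decreasing sequences} to identify the limit of $d_{p}(P_{\bb}(u_{t,\ee}),P_{\bb}(v_{t,\ee}))$ with $d_{p}(u_{t},v_{t})$. You have correctly identified the regularity of the $\om_{\ee}$-geodesics as the technical crux, which is exactly where the paper's argument also concentrates.
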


Now we will extend this result to $\E^{p}(X,\bb)$ for $p > 1$. 

\begin{thm}\label{thm: buseman convexity for big and nef and p > 1}
    Let  $\bb$ represent a big and nef cohomology class and $p > 1$. Let  $u_{0}, u_{1}, v_{0}, v_{1} \in \E^{p}(X,\bb)$. If $u_{t}$ and $v_{t}$ are the geodesics joining $u_{0}, u_{1}$ and $v_{0}, v_{1}$ respectively, then 
    \[
    d_{p}(u_{t}, v_{t}) \leq (1-t) d_{p}(u_{0},v_{0}) + td_{p}(u_{1},v_{1}). 
    \]
\end{thm}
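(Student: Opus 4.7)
My plan is to reduce the theorem to the $\cH_\bb$ case (Lemma \ref{lem: buseman convexity for big and nef in cHbb}) by a standard approximation argument. Using the construction of $d_p$ on $\E^p(X,\bb)$ recalled in Section \ref{sec: dp metric in big and nef}, I first pick decreasing sequences $u_0^j, u_1^j, v_0^j, v_1^j \in \cH_\bb$ with $u_0^j \searrow u_0$, $u_1^j \searrow u_1$, $v_0^j \searrow v_0$, $v_1^j \searrow v_1$, and I let $u_t^j, v_t^j$ be the weak geodesics joining $(u_0^j, u_1^j)$ and $(v_0^j, v_1^j)$ respectively. Lemma \ref{lem: buseman convexity for big and nef in cHbb} then immediately gives
\[
d_p(u_t^j, v_t^j) \leq (1-t)\, d_p(u_0^j, v_0^j) + t\, d_p(u_1^j, v_1^j)
\]
for every $j$ and every $t \in [0,1]$.

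The second step is to pass to the limit as $j \to \infty$. For the right-hand side, Lemma \ref{lem: continuity under decreasing sequences} directly yields $d_p(u_0^j, v_0^j) \to d_p(u_0, v_0)$ and $d_p(u_1^j, v_1^j) \to d_p(u_1, v_1)$. For the left-hand side, the same lemma applied with the constant sequence $u_0$ in one slot shows $d_p(u_0^j, u_0) \to 0$, and similarly $d_p(u_1^j, u_1) \to 0$, $d_p(v_0^j, v_0) \to 0$, $d_p(v_1^j, v_1) \to 0$. Lemma \ref{lem: endpoint stability} then forces $d_p(u_t^j, u_t) \to 0$ and $d_p(v_t^j, v_t) \to 0$, and the triangle inequality gives $d_p(u_t^j, v_t^j) \to d_p(u_t, v_t)$. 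Taking $j \to \infty$ in the displayed inequality closes the argument.

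I do not anticipate a genuine obstacle here; the real work has been done in establishing Buseman convexity on $\cH_\bb$. The one subtle point I want to verify is the absence of a logical loop: Lemma \ref{lem: endpoint stability} is proved using a Lidskii-type inequality, but in the big and nef setting the version needed is exactly Lemma \ref{lem: Lidskii inequality for big and nef classes}, whose proof does not invoke Buseman convexity. Once this is checked, the approximation from $\cH_\bb$ to $\E^p(X,\bb)$ is entirely routine.
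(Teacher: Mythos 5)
Your proposal is correct and follows essentially the same approximation argument as the paper; the only (immaterial) difference is that the paper passes to the limit on the left-hand side by noting $u_t^j \searrow u_t$, $v_t^j \searrow v_t$ and applying Lemma~\ref{lem: continuity under decreasing sequences} directly to the pair $(u_t^j, v_t^j)$, whereas you route through $d_p$-endpoint stability (Lemma~\ref{lem: endpoint stability}). Your observation that no circularity arises via the Lidskii-type inequality is also accurate.
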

\begin{proof}
    As standard, we can find $u_{0}^{j}, u_{1}^{j}, v_{0}^{j}, v_{1}^{j} \in \cH_{\bb}$ such that $u_{0}^{j} \searrow u_{0}$, $u_{1}^{j} \searrow u_{1}$, $v_{0}^{j} \searrow v_{0}$, and $v_{1}^{j} \searrow v_{1}$. If $u_{t}^{j} $ and $v_{t}^{j}$ are the geodesics joining $u_{0}^{j}$, $u_{1}^{j}$, and $v_{0}^{j}$, $v_{1}^{j}$ respectively, then $u_{t}^{j} \searrow u_{t}$ and $v_{t}^{j} \searrow v_{t}$. 

    By the definition of $d_{p}$ for $\E^{p}(X,\bb)$ in Section~\ref{sec: dp metric in big and nef}, we know that $\lim_{j\to \infty} d_{p}(u_{0}^{j}, v_{0}^{j}) = d_{p}(u_{0}, v_{0})$ and that $\lim_{j \to \infty} d_{p}(u_{1}^{j}, v_{1}^{j}) = d_{p}(u_{1}, v_{1})$. Moreover, due to Lemma~\ref{lem: continuity under decreasing sequences}, $\lim_{j \to \infty} d_{p}(u_{t}^{j}, v_{t}^{j}) = d_{p}(u_{t}, v_{t}) $. By Lemma~\ref{lem: buseman convexity for big and nef in cHbb}, we have 
    \[
    d_{p}(u_{t}^{j}, v_{t}^{j}) \leq (1-t)d_{p}(u_{0}^{j}, v_{0}^{j}) + t d_{p}(u_{1}^{j}, v_{1}^{j}).
    \]
    Taking limit $j \to \infty$, we get that 
    \[
    d_{p}(u_{t}, v_{t}) \leq (1-t) d_{p}(u_{0}, v_{0}) + t d_{p}(u_{1}, v_{1}). 
    \]
\end{proof}

\subsection{Buseman Convexity for big classes when $p > 1$} 

As in the case of Lidskii-type inequality in Section~\ref{sec: Lidskii type inequality}, we will first show the Buseman convexity for the analytic singularity case, and then for the minimal singularity case for the big classes. 

Assume that $\te$ is a smooth closed real $(1,1)$-form representing a big cohomology class. Let $\psi$ be a $\te$-psh function with analytic singularities. We will first show that $(\E^{p}(X,\te,\psi), d_{p})$ is Buseman convex. 

\begin{lem}\label{lem: Buseman convexity for analytic singularity type and p > 1}
 For $p > 1$, the metric space $(\E^{p}(X,\te, \psi), d_{p})$ is a Buseman convex metric space. This means that if $u_{0}, u_{1}, v_{0}, v_{1} \in \E^{p}(X,\te, \psi)$, and $u_{t}$ and $v_{t}$ are the weak geodesics joining $u_{0}$,$u_{1}$, and $v_{0}$, $v_{1}$ respectively, then 
    \[
    d_{p}(u_{t}, v_{t}) \leq (1-t)d_{p}(u_{0},v_{0}) + td_{p}(u_{1},v_{1}).
    \]
\end{lem}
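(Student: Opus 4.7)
The plan is to reduce this case to the big and nef case already handled in Theorem~\ref{thm: buseman convexity for big and nef and p > 1} via the order-preserving bijection
\[
\E^{p}(X,\te,\psi) \ni u \mapsto \tu = (u-\psi)\circ \mu + g \in \E^{p}(\tX,\tte)
\]
recalled in Section~\ref{sec: dp metric in analytic singularity}. Since $\tte$ represents a big and nef class on the modification $\tX$ and Equation~\eqref{eq: equation for dp in the new paper} says that $d_p$ is preserved under this bijection, Buseman convexity on $(\E^{p}(\tX,\tte), d_p)$ will transfer directly once we check that geodesics correspond as well.

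The key step is therefore to verify the following: if $u_0, u_1 \in \E^{p}(X,\te,\psi)$ and $(0,1)\ni t \mapsto u_t$ is the weak $\te$-geodesic joining them (necessarily inside $\E^{p}(X,\te,\psi)$), then $(0,1)\ni t \mapsto \tu_t$ is the weak $\tte$-geodesic on $\tX$ joining $\tu_0$ and $\tu_1$. This follows from the envelope definition in Equation~\eqref{eq: def of geodesic}: a map $(z,x) \mapsto v_{\Re z}(x)$ is a $\te$-subgeodesic dominated by $u_{0,1}$ at the boundary of the strip $S$ if and only if the corresponding map $(z,\tilde x) \mapsto \tilde v_{\Re z}(\tilde x) := (v_{\Re z}(\mu(\tilde x)) - \psi(\mu(\tilde x))) + g(\tilde x)$ is a $\tte$-subgeodesic on $\tX$ dominated by $\tu_{0,1}$ at the boundary (note that $\psi$ and $g$ are $t$-independent). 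Because $u \mapsto \tu$ is an order-preserving bijection between $\PSH(X,\te,\psi)$ and $\PSH(\tX,\tte)$ that commutes with pointwise suprema and with upper semicontinuous regularization (the latter because $\mu$ is proper and $g$ is bounded), the envelope construction of the geodesic is preserved.

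Granted this identification, apply Theorem~\ref{thm: buseman convexity for big and nef and p > 1} to $\tu_0, \tu_1, \tilde v_0, \tilde v_1 \in \E^{p}(\tX, \tte)$ to obtain
\[
d_p(\tu_t, \tilde v_t) \leq (1-t)\, d_p(\tu_0, \tilde v_0) + t\, d_p(\tu_1, \tilde v_1),
\]
and then use Equation~\eqref{eq: equation for dp in the new paper} to rewrite each term as the $d_p$-distance of the corresponding pair in $\E^{p}(X,\te,\psi)$. The main (mild) obstacle is the geodesic correspondence noted above; once that is verified the argument is essentially a transfer of structure along an isometry, and the proof proceeds exactly as the $p > 1$ big and nef case does for potentials in $\cH_\bb$.
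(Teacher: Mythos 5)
Your proposal is correct and follows essentially the same route as the paper: transfer the problem to $\E^{p}(\tX,\tte)$ via the bijection $u \mapsto \tu$, apply the big and nef case, and pull back using Equation~\eqref{eq: equation for dp in the new paper}. The only difference is that you explicitly verify that weak geodesics correspond under the bijection, a point the paper's proof uses implicitly; your justification of that step is sound.
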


\begin{proof}
    Recall from Section~\ref{sec: dp metric in analytic singularity} that there exists a modification $\mu : \tX \to X$, a big and nef cohomology class $\tte$ and a bounded $\tte$-psh function $g$ on $\tX$ such that the metric $d_{p}$ on $\E^{p}(X,\te,\psi)$ is given by 
    \[
    d_{p}(u_{0}, u_{1}) = d_{p}(\tilde{u}_{0}, \tilde{u}_{1})
    \]
    where $\E^{p}(X,\te,\psi) \ni u \mapsto \tilde{u}:= (u-\psi)\circ \mu + g  \in \E^{p}(\tX,\tte)$ is a bijection. 

    By Theorem~\ref{thm: buseman convexity for big and nef and p > 1}, the metric space $(\E^{p}(\tX,\tte), d_{p})$ is Buseman convex. Therefore, 
    \[
    d_{p}(u_{t}, v_{t}) = d_{p}(\tilde{u}_{t}, \tilde{v}_{t}) \leq (1-t)d_{p}(\tilde{u}_{0}, \tilde{v}_{0}) + t d_{p}(\tilde{u}_{1}, \tilde{v}_{1}) = (1-t) d_{p}(u_{0}, v_{0})  + t d_{p}(u_{1}, v_{1}). 
    \]
\end{proof}

Now we can show that the metric space $(\E^{p}(X,\te),d_{p})$ is Buseman convex for $p > 1$. 

\begin{thm} \label{thm: Buseman convexity for big class and p > 1}
    For $p > 1$, the metric space $(\E^{p}(X,\te), d_{p})$ is a Buseman convex metric space. This means that if $u_{0}, u_{1}, v_{0}, v_{1} \in \E^{p}(X,\te)$, and $u_{t}$ and $v_{t}$ are the weak geodesics joining $u_{0}$,$u_{1}$, and $v_{0}$, $v_{1}$ respectively, then 
    \[
    d_{p}(u_{t}, v_{t}) \leq (1-t)d_{p}(u_{0},v_{0}) + td_{p}(u_{1},v_{1}).
    \]
\end{thm}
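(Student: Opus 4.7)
The plan is to follow the same approximation strategy as in the proof of Theorem~\ref{thm: Lidskii inequality for big classes}, feeding the Buseman inequality in the analytic-singularity case (Lemma~\ref{lem: Buseman convexity for analytic singularity type and p > 1}) into the Demailly regularization scheme that defines $d_p$ on $\E^p(X,\te)$. Fix an increasing sequence $\psi_k \nearrow V_\te$ of $\te$-psh functions with analytic singularities and set $u_i^k := P_\te[\psi_k](u_i)$, $v_i^k := P_\te[\psi_k](v_i) \in \E^p(X,\te,\psi_k)$ for $i = 0, 1$. Let $u_t^k$ (resp.\ $v_t^k$) denote the weak geodesic joining $u_0^k, u_1^k$ (resp.\ $v_0^k, v_1^k$). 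Since $u_i^k \preceq \psi_k$, the maximum principle on the strip applied to the $\pi^*\te$-psh function $\psi_k + C$ forces $u_t^k, v_t^k \preceq \psi_k$, so both families lie in $\E^p(X,\te,\psi_k)$ for every $t \in [0,1]$.

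Applying Lemma~\ref{lem: Buseman convexity for analytic singularity type and p > 1} inside $\E^p(X,\te,\psi_k)$ yields
\[
d_p(u_t^k, v_t^k) \leq (1-t)\, d_p(u_0^k, v_0^k) + t\, d_p(u_1^k, v_1^k),
\]
with every distance measured in the relative class. By Equation~\eqref{eq: definition of dp for big class}, the right-hand side converges to $(1-t) d_p(u_0, v_0) + t d_p(u_1, v_1)$ in the metric of $\E^p(X,\te)$ as $k \to \infty$.

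To pass to the limit on the left-hand side, I would introduce the projected geodesics $\hat u_t^k := P_\te[\psi_k](u_t)$ and $\hat v_t^k := P_\te[\psi_k](v_t)$; both lie in $\E^p(X,\te,\psi_k)$, share endpoints with $u_t^k, v_t^k$, and satisfy $u_t^k \leq \hat u_t^k \leq u_t$ (and analogously for $v$). Equation~\eqref{eq: definition of dp for big class} gives $d_p(\hat u_t^k, \hat v_t^k) \to d_p(u_t, v_t)$, so it suffices to prove $d_p(u_t^k, \hat u_t^k) \to 0$ and $d_p(v_t^k, \hat v_t^k) \to 0$; combining with the triangle inequality and passing to the limit in the Buseman inequality at level $k$ then completes the argument. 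This final convergence I would extract from the Lidskii inequality in the relative class (Lemma~\ref{lem: lidskii inequality for analytic singularity}) applied to the chain $\hat u_t^k \geq u_t^k$ against a common lower bound, together with the $I_p$-$d_p$ comparison from the proof of Lemma~\ref{lem: decreasing bounded dp sequence has a limit in Ep} and the monotone behavior of the non-pluripolar Monge-Amp\`ere masses $\te^n_{u_t^k}, \te^n_{\hat u_t^k}$ under $\psi_k \nearrow V_\te$. The main obstacle is precisely this identification: the weak geodesic $u_t^k$ computed inside $\E^p(X,\te,\psi_k)$ and the projection $\hat u_t^k$ of the big-class geodesic are a priori distinct, and controlling their $d_p$-difference uniformly in $k$ requires reconciling the two Demailly-approximated constructions via the Lidskii bound collapsing as $k \to \infty$.
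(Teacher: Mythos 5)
Your proposal follows essentially the same route as the paper: project onto the analytic-singularity classes $\E^p(X,\te,\psi_k)$, apply the relative Buseman convexity there, and reduce the convergence of the left-hand side to showing $d_p(u_t^k, P_\te[\psi_k](u_t)) \to 0$. The paper establishes this last convergence by the monotonicity argument of Lemma~\ref{lem: projection getting closer property from Kahler to nef} (both $u_t^k$ and $P_\te[\psi_k](u_t)$ increase to $u_t$), rather than via the Lidskii bound you sketch, but this is a difference of justification within the same strategy.
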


\begin{proof}
    As described in Section~\ref{sec: dp metric in big}, let $\psi_{k}$ be an increasing sequence of $\te$-psh functions with analytic singularities such $\psi_{k} \nearrow V_{\te}$ a.e.

    We denote the projections of $u_{0},u_{1},v_{0}$, and $v_{1}$ to $\E^{p}(X,\te,\psi_{k})$ by $u_{0}^{k} = P_{\te}[\psi_{k}](u_{0})$, $u_{1}^{k} = P_{\te}[\psi_{k}](u_{1})$, $v_{0}^{k} = P_{\te}[\psi_{k}](v_{0})$, and $v_{1}^{k} = P_{\te}[\psi_{k}](v_{1})$ respectively. We denote by $u_{t}^{k}$ and $v_{t}^{k}$ the weak geodesics joining $u_{0}^{k}$, $u_{1}^{k}$, and $v_{0}^{k}$, $v_{1}^{k}$ respectively. 

    By a proof similar to Lemma~\ref{lem: projection getting closer property from Kahler to nef} we can show that 
    \begin{equation}\label{eq: projection coming closer from analytic singularity to big}
    \lim_{k \to \infty} d_{p}(u_{t}^{k}, P_{\te}[\psi_{k}](u_{t})) = 0 \qquad \text{ and } \qquad  \lim_{k\to \infty} d_{p}(v_{t}^{k}, P_{\te}[\psi_{k}](v_{t})) = 0.     
    \end{equation}
    See the proof of \cite[Theorem 8.3, Equation 26]{guptacompletegeodescimetricinbigclasses} for more details. 

    From Equation~\eqref{eq: definition of dp for big class} and~\ref{eq: projection coming closer from analytic singularity to big}, we get
    \[
    d_{p}(u_{t}, v_{t}) = \lim_{k \to \infty} d_{p}(P_{\te}[\psi_{k}](u_{t}), P_{\te}[\psi_{k}](v_{t})) = \lim_{k\to \infty}d_{p}(u_{t}^{k}, v_{t}^{k}). 
    \]
    By Lemma~\ref{lem: Buseman convexity for analytic singularity type and p > 1}, we can write 
    \[
    d_{p}(u_{t}^{k}, v_{t}^{k}) \leq (1-t)d_{p}(u_{0}^{k}, v_{0}^{k}) + t d_{p}(u_{1}^{k}, v_{1}^{k}).
    \]
    Combining the last two equations we get 
    \[
    d_{p}(u_{t}, v_{t}) = \lim_{k \to \infty}d_{p}(u_{t}^{k}, v_{t}^{k}) \leq \lim_{k \to \infty} (1-t)d_{p}(u_{0}^{k}, v_{0}^{k}) + t d_{p}(u_{1}^{k}, v_{1}^{k}) = (1-t)d_{p}(u_{0}, v_{0}) + t d_{p}(u_{1}, v_{1}). 
    \]
\end{proof}

Combined with Theorem~\ref{thm: Buseman convexity for p =1}, this finishes the proof of Theorem~\ref{thm: introduction buseman convexity}.

\section{Space of geodesic rays}\label{sec: space of geodesic rays}

In this section, we will study the space of geodesic rays and prove Theorem~\ref{thm: Introduction finite energy geodesic ray}. Let $u \in \E^{p}(X,\te)$. A geodesic ray $\{u_{t}\}_{t}$ is a finite $p$-energy geodesic ray emanating from $u$ if $u_{t} \to u$ as $t \to 0$ in $L^{1}(X)$ and $u_{t} \in \E^{p}(X,\te)$ for all $t> 0$. We denote by $\cR^{p}_{u}$ the set of all finite $p$-energy geodesic rays emanating from $u$. 

We construct a metric $d_{p}^{c}$ on $\cR^{p}_{u}$ as follows. If $\{u_{t}\}_{t}, \{v_{t}\}_{t} \in \cR^{p}_{u}$ are two finite energy geodesic rays, then we define
\begin{equation}\label{eq: equation for distance between rays}
d_{p}^{c}(\{u_{t}\}_{t}, \{v_{t}\}_{t}) = \lim_{t \to \infty} \frac{d_{p}(u_{t}, v_{t})}{t}.    
\end{equation}

Due to Buseman convexity of $d_{p}$ from Theorem~\ref{thm: introduction buseman convexity}, we know that if $t_{1} \geq t_{2}$, then 
\[
d_{p}(u_{t_{2}}, v_{t_{2}}) \leq \left(1 -\frac{t_{2}}{t_{1}}\right) d_{p}(u_{0}, v_{0}) + \frac{t_{2}}{t_{1}}d_{p}(u_{t_{1}}, v_{t_{1}}). 
\]
Since $u_{0} = v_{0} = u$, we have 
\[
\frac{d_{p}(u_{t_{2}}, v_{t_{2}})}{t_{2}} \leq \frac{d_{p}(u_{t_{1}}, v_{t_{1}})}{t_{1}}.
\]
Thus the limit in Equation~\eqref{eq: equation for distance between rays} is increasing and is bounded from above due to the triangle inequality. This is because $d_{p}(u_{t}, v_{t}) \leq d_{p}(u_{t}, u) + d_{p}(v_{t}, u) = td_{p}(u_{1}, u) + t d_{p}(v_{1}, u)$. Thus 
\[
\frac{d_{p}(u_{t}, v_{t})}{t} \leq d_{p}(u_{1},u) + d_{p}(v_{1},u).
\]
Thus the limit in Equation~\eqref{eq: equation for distance between rays} is well defined. 

\begin{lem}\label{lem: metric on space of rays}
    Equation~\eqref{eq: equation for distance between rays} defines a metric on $\cR^{p}_{u}$. 
\end{lem}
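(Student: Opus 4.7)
The plan is to verify the three metric axioms for $d_p^c$: non-negativity (with identity of indiscernibles), symmetry, and the triangle inequality. Finiteness and non-negativity of $d_p^c(\{u_t\}, \{v_t\})$ are already guaranteed by the discussion immediately preceding the statement: the ratio $d_p(u_t, v_t)/t$ is monotone non-decreasing in $t$ by Buseman convexity (Theorem~\ref{thm: introduction buseman convexity}) together with $u_0 = v_0 = u$, and it is bounded above by $d_p(u_1, u) + d_p(v_1, u)$ via the triangle inequality for $d_p$. Symmetry of $d_p^c$ is inherited directly from symmetry of $d_p$.

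For the triangle inequality, I would fix three rays $\{u_t\}, \{v_t\}, \{w_t\} \in \cR^p_u$, apply the triangle inequality for $d_p$ at every time $t$,
\[
d_p(u_t, w_t) \leq d_p(u_t, v_t) + d_p(v_t, w_t),
\]
divide by $t$, and pass to the limit $t \to \infty$. Since each of the three limits exists by the monotonicity argument above, this yields
\[
d_p^c(\{u_t\}, \{w_t\}) \leq d_p^c(\{u_t\}, \{v_t\}) + d_p^c(\{v_t\}, \{w_t\}).
\]

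The main point, and the only slightly non-routine piece, is identity of indiscernibles: if $d_p^c(\{u_t\}, \{v_t\}) = 0$, then the two rays coincide. This is where the monotonicity of $t \mapsto d_p(u_t, v_t)/t$ (observed in the excerpt just above the statement) is essential. Indeed, since this ratio is non-decreasing, its limit as $t \to \infty$ equals its supremum, so the hypothesis forces $d_p(u_t, v_t)/t = 0$ for every $t > 0$. Because $d_p$ is a genuine metric on $\E^p(X, \te)$, this gives $u_t = v_t$ for all $t > 0$; combined with the common initial value $u_0 = v_0 = u$, the two rays agree as elements of $\cR^p_u$. I do not anticipate any real obstacle: the entire argument is a direct consequence of the properties of $d_p$ together with the monotonicity ratio, both of which are already in hand.
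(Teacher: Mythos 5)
Your proposal is correct and follows essentially the same route as the paper: non-negativity and symmetry are inherited from $d_p$, the triangle inequality is obtained by dividing the pointwise triangle inequality by $t$ and passing to the limit, and non-degeneracy follows from the fact that the non-decreasing ratio $t \mapsto d_p(u_t,v_t)/t$ having limit (hence supremum) zero forces it to vanish identically, whence $u_t = v_t$ for all $t$. No gaps.
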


\begin{proof}
    Let $\{u_{t}\}_{t}, \{v_{t}\}_{t}, \{w_{t}\}_{t} \in \cR^{p}_{u}$ be three finite $p$-energy geodesic rays. $d_{p}^{c} (\{u_{t}\}_{t}, \{v_{t}\}_{t}) \geq 0$ because $d_{p}(u_{t}, v_{t}) \geq 0$. 

    If $d_{p}^{c}(\{u_{t}\}_{t}, \{v_{t}\}_{t}) = 0$, then the function $f(t) = d_{p}(u_{t}, v_{t})/t$ is an increasing function that satisfies $f(0) = 0$ and $\lim_{t\to\infty}f(t) = 0$. Therefore $f(t) = 0$ for all $t \geq 0$. Thus $d_{p}(u_{t}, v_{t}) = 0$ for all $t \geq 0$. Therefore $u_{t} = v_{t}$ for all $t \geq 0$. So $\{u_{t}\}_{t} = \{v_{t}\}_{t}$. This proves non-degeneracy of $d_{p}^{c}$.

    The triangle inequality for $d_{p}^{c}$ follows from the triangle inequality for $d_{p}$. 
    \begin{align*}
d_{p}^{c}(\{u_{t}\}_{t}, \{v_{t}\}_{t}) &= \lim_{t\to\infty} \frac{d_{p}(u_{t},v_{t})}{t}\\
&\leq \lim_{t\to\infty}\frac{d_{p}(u_{t}, w_{t})}{t} + \lim_{t\to\infty}\frac{d_{p}(w_{t},v_{t})}{t}\\
&= d_{p}^{c}(\{u_{t}\}_{t}, \{w_{t}\}_{t}) + d_{p}^{c}(\{w_{t}\}_{t}, \{v_{t}\}_{t}).        
    \end{align*}

\end{proof}

The next theorem shows that the metric space $(\cR^{p}_{u}, d_{p}^{c})$ are all isometric for all $u \in \E^{p}(X,\te)$. 

\begin{thm} \label{thm: projection operator}
    Given $u,v \in \E^{p}(X,\te)$, there exists a map $P_{uv} : \cR^{p}_{u} \to \cR^{p}_{v}$ that induces a bijective isometry $P_{uv} : (\cR^{p}_{u}, d_{p}^{c}) \to (\cR^{p}_{v}, d_{p}^{c})$. 
\end{thm}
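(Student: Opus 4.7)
The plan is to construct $P_{uv}$ via a parallel-transport limit. Given $\{u_{t}\}_{t} \in \cR^{p}_{u}$, for each $T > 0$ let $[0,T] \ni s \mapsto v_{s}^{(T)}$ denote the weak geodesic joining $v = v_{0}^{(T)}$ to $u_{T} = v_{T}^{(T)}$; I then define $v_{t} := \lim_{T\to\infty} v_{t}^{(T)}$ in $d_{p}$ for each fixed $t \geq 0$, and set $P_{uv}(\{u_{t}\}_{t}) := \{v_{t}\}_{t}$. The whole argument is driven by Theorem~\ref{thm: introduction buseman convexity}.

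First I would show that $\{v_{t}^{(T)}\}_{T}$ is $d_{p}$-Cauchy for each fixed $t$. Buseman convexity applied to the pair $\{u_{s}\}_{s}$ and $\{v_{s}^{(T_{2})}\}_{s}$ on $[0,T_{2}]$ (sharing the endpoint $u_{T_{2}}$) gives
\begin{equation*}
d_{p}(v_{s}^{(T_{2})}, u_{s}) \leq (1 - s/T_{2})\, d_{p}(u,v).
\end{equation*}
Buseman convexity applied to $\{v_{s}^{(T_{1})}\}_{s}$ and $\{v_{s}^{(T_{2})}\}_{s}$ on $[0,T_{1}]$ (both starting at $v$), combined with the previous estimate at $s = T_{1}$, yields
\begin{equation*}
d_{p}(v_{t}^{(T_{1})}, v_{t}^{(T_{2})}) \leq t\bigl(1/T_{1} - 1/T_{2}\bigr)\, d_{p}(u,v),
\end{equation*}
which vanishes as $T_{1}, T_{2} \to \infty$. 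Completeness of $(\E^{p}(X,\te), d_{p})$ then produces $v_{t} \in \E^{p}(X,\te)$.

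Next I would verify $\{v_{t}\}_{t} \in \cR^{p}_{v}$. For the geodesic property on any $[0,S]$, observe that for $T > S$ the restriction $[0,S] \ni s \mapsto v_{s}^{(T)}$ is the weak geodesic from $v$ to $v_{S}^{(T)}$; since $v_{S}^{(T)} \to v_{S}$ in $d_{p}$, Lemma~\ref{lem: endpoint stability} shows the limit is the geodesic from $v$ to $v_{S}$, and varying $S$ gives a geodesic ray on $[0,\infty)$. For the $L^{1}$-continuity at $t = 0$, the identity $d_{p}(v_{t}^{(T)}, v) = (t/T)\, d_{p}(v, u_{T})$ together with the triangle inequality $d_{p}(v,u_{T}) \leq d_{p}(u,v) + T\, d_{p}(u, u_{1})$ passes to $d_{p}(v_{t}, v) \leq t\, d_{p}(u,u_{1})$ in the limit, which yields the required $L^{1}$ convergence.

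For the isometry, let $\{u_{t}^{0}\}, \{u_{t}^{1}\} \in \cR^{p}_{u}$ with images $\{v_{t}^{0}\}, \{v_{t}^{1}\}$. Letting $T \to \infty$ in the first display above gives the uniform bound $d_{p}(v_{t}^{i}, u_{t}^{i}) \leq d_{p}(u,v)$, so
\begin{equation*}
\bigl|d_{p}(v_{t}^{0}, v_{t}^{1}) - d_{p}(u_{t}^{0}, u_{t}^{1})\bigr| \leq 2\, d_{p}(u,v);
\end{equation*}
dividing by $t$ and sending $t \to \infty$ yields $d_{p}^{c}(\{v_{t}^{0}\}, \{v_{t}^{1}\}) = d_{p}^{c}(\{u_{t}^{0}\}, \{u_{t}^{1}\})$. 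For bijectivity, the natural candidate inverse is $P_{vu}$. Writing $\{u_{t}'\} := P_{vu}(\{v_{t}\})$ with approximants $u_{t}^{(T)}$ (geodesic from $u$ to $v_{T}$), Buseman convexity gives $d_{p}(u_{t}^{(T)}, u_{t}) \leq (t/T)\, d_{p}(v_{T}, u_{T}) \leq (t/T)\, d_{p}(u,v) \to 0$, forcing $u_{t}' = u_{t}$.

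The main obstacle I anticipate is confirming that the pointwise $d_{p}$-limit $\{v_{t}\}_{t}$ is a genuine finite $p$-energy geodesic ray starting at $v$; in particular, the $L^{1}$-continuity at $t = 0$ and the geodesic property on all of $[0,\infty)$ require the combined force of Buseman convexity, the endpoint stability in Lemma~\ref{lem: endpoint stability}, and the completeness of $(\E^{p}(X,\te), d_{p})$, but no single ingredient suffices alone.
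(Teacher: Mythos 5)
Your proposal is correct, and it reaches the same destination as the paper by a genuinely different (and in one respect cleaner) route. The paper first assumes $u \geq v$, uses the comparison principle to show that the approximating geodesics $v_l^{t}$ decrease in $t$, obtains the ray as a monotone limit, and then handles general $u,v$ by passing through the rooftop $h = P_{\te}(u,v) \leq u, v$ and composing two such constructions (which a priori only yields parallel rays, with the isometry then extracted by the same triangle-inequality argument you use). You instead dispense with the ordering assumption entirely: two applications of Buseman convexity give the quantitative Cauchy estimate $d_{p}(v_{t}^{(T_{1})}, v_{t}^{(T_{2})}) \leq t\left(1/T_{1} - 1/T_{2}\right) d_{p}(u,v)$, after which completeness of $(\E^{p}(X,\te), d_{p})$ produces the limit directly, with the uniform bound $d_{p}(u_{t}, v_{t}) \leq d_{p}(u,v)$ falling out of the first display. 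The remaining verifications (endpoint stability for the geodesic property, $d_{p}$-convergence implying $L^{1}$-convergence at $t=0$, the two-sided triangle-inequality argument for the isometry) coincide with the paper's. You also make bijectivity explicit by checking $P_{vu} \circ P_{uv} = \mathrm{id}$ via one more Buseman estimate, a point the paper leaves implicit. What the paper's monotone approach buys is a pointwise decreasing family of approximants; what yours buys is the elimination of the three-case analysis and an explicit Cauchy rate. The only cosmetic gap is that the final step at $t=0$ ($d_{p}$-convergence implies $L^{1}$-convergence) deserves a citation, exactly as the paper provides one.
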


\begin{proof}
    Let $\{u_{t}\}_{t} \in \cR^{p}_{u}$. We will construct a finite energy geodesic ray $\{v_{t}\}_{t}$ starting from $v$. 

    First, assume that $u \geq v$. We can construct a finite energy geodesic $[0,t] \ni l \mapsto v_{l}^{t} \in \E^{p}(X,\te)$ joining $v = v_{0}^{t}$ and $u_{t} = v_{t}^{t}$. Consider $0 \leq t \leq t'$. Since $[0,t'] \ni l \mapsto u_{l}$ is a finite energy geodesic joining $u$ and $u_{t'}$, whereas $[0,t'] \ni l \mapsto v_{l}^{t'}$ is a finite energy geodesic joining $v$ and $u_{t'}$. Since $u \geq v$, by the comparison principle, $v_{t}^{t'} \leq u_{t} = v_{t}^{t}$. Again applying the comparison principle, we get that for $l \leq t$, $v_{l}^{t'} \leq v_{l}^{t}$. 

    Moreover, by Theorem~\ref{thm: introduction buseman convexity}, 
    \[
    d_{p}(v_{l}^{t}, u_{l}) \leq \left( 1- \frac{l}{t}\right) d_{p}(u,v).
    \]
    By Lemma~\ref{lem: endpoint stability}, the map $[0,\infty) \ni l \mapsto v_{l} := \lim_{t \to \infty}v_{l}^{t} \in \E^{p}(X,\te)$ is a finite energy geodesic ray. Note that the limit geodesic $v_{l}$ still emanates from $v$. This is because by Lemma~\ref{lem: endpoint stability}, the path $[0,t] \ni l \mapsto v_{l} \in \E^{p}(X,\te)$ is a geodesic joining $v$ and $v_{t}$. Because $v, v_{t} \in E^{p}(X,\te)$ the result follows from the following argument. As $d_{p}(v_{l}, v_{0}) = \frac{l}{t}$, we get that $\lim_{l \to 0} d_{p}(v_{l}, v_{0}) = 0$. Thus by \cite[Lemma 4.12]{guptacompletegeodescimetricinbigclasses} and \cite[Theorem 1.2]{GuptaCompletemetrictopology}, we get that $v_{l} \to v_{0}$ in $L^{1}(X)$. See \cite[Proposition 4.2.1]{minchennotes} for a more general result. Now, taking the limit $t \to \infty$ in the equation above we get that $d_{p}(v_{l}, u_{l}) \leq d_{p}(u,v)$. 

    A similar proof works when $u\leq v$. 

    More generally, we can consider $h = P_{\te}(u,v) \in \E^{p}(X,\te)$. Now $h \leq u, v$. Thus from above, we can construct a finite energy geodesic ray $\{h_{t}\}_{t} \in \cR^{p}_{h}$ that satisfies $d_{p}(h_{t}, u_{t}) \leq d_{p}(h,u)$. We can now also construct a finite energy geodesic ray $\{v_{t}\}_{t} \in \cR^{p}_{v}$ such that $d_{p}(h_{t}, v_{t}) \leq d_{p}(h,v)$. Thus by the triangle inequality $d_{p}(u_{t}, v_{t}) \leq d_{p}(u_{t},h_{t}) + d_{p}(h_{t},v_{t}) \leq d_{p}(u,h) + d_{p}(h,v)$. Thus $\{u_{t}\}_{t}, \{v_{t}\}_{t}$ are parallel geodesic rays. 

    To see that $P_{uv} : (\cR^{p}_{u}, d_{p}^{c}) \to (\cR^{p}_{v}, d_{p}^{c})$ is an isometry consider $\{u_{t}^{0}\}_{t}, \{u_{t}^{1}\}_{t} \in \cR^{p}_{u}$. Let $P_{uv}(\{u_{t}^{i}\}_{t}) = \{v_{t}^{i}\}_{t}$ for $i = 0,1$. By the triangle inequality we have 
    \begin{align*}
        d_{p}^{c}(\{v_{t}^{0}\}_{t}, \{v_{t}^{1}\}_{t}) &= \lim_{t\to \infty} \frac{d_{p}(v_{t}^{0}, v_{t}^{1})}{t} \\
        &\leq \lim_{t \to \infty} \frac{d_{p}(v_{t}^{0}, u_{t}^{0}) + d_{p}(u_{t}^{0}, u_{t}^{1}) + d_{p}(u_{t}^{1}, v_{t}^{1})}{t}\\
        &= \lim_{t\to \infty} \frac{d_{p}(u_{t}^{0}, u_{t}^{1})}{t}\\
        &= d_{p}^{c}(\{u_{t}^{0}\}_{t}, \{u_{t}^{1}\}_{t}).
    \end{align*}
    In the second line, we used that fact that $d_{p}(u_{t}^{i}, v_{t}^{i})$ remain bounded as $t \to \infty$ for $i = 0,1$. The other side inequality is obtained similarly. Thus $d_{p}^{c}(\{u_{t}^{0}\}_{t}, \{u_{t}^{1}\}_{t}) = d_{p}^{c}(\{v_{t}^{0}\}_{t}, \{v_{t}^{1}\}_{t})$.  
\end{proof}

Since the spaces $(\cR^{p}_{u}, d_{p}^{c})$ are all isometric to each other, it is good enough to study just one of them. We will fix $u = V_{\te}$ and use $\cR^{p}_{\te}$ to denote $\cR^{p}_{V_{\te}}$. The goal of the rest of the section is to prove that $(\cR^{p}_{\te}, d_{p}^{c})$ is a complete geodesic metric space. First, we prove

\begin{lem}
    The metric space $(\cR^{p}_{\te}, d_{p}^{c})$ is a complete metric space. 
\end{lem}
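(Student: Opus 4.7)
Let $\{u^{j}_{t}\}_{t}$, $j \in \N$, be a $d_{p}^{c}$-Cauchy sequence in $\cR^{p}_{\te}$. The plan is to use Buseman convexity to convert the Cauchy condition ``at infinity'' into a genuine Cauchy condition at each fixed time, construct the limit ray pointwise in $t$, and then use endpoint stability to verify it is actually a geodesic ray.

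The first step is to observe that since all rays $\{u^{j}_{t}\}_{t}$ emanate from the common point $V_{\te}$, Buseman convexity (Theorem~\ref{thm: introduction buseman convexity}) applied to the geodesics $u^{j}$ and $u^{k}$ yields, for any $0 < s \leq t$,
\[
\frac{d_{p}(u^{j}_{s}, u^{k}_{s})}{s} \leq \frac{d_{p}(u^{j}_{t}, u^{k}_{t})}{t}.
\]
Letting $t \to \infty$ and recalling the definition of $d_{p}^{c}$, we get $d_{p}(u^{j}_{s}, u^{k}_{s}) \leq s \cdot d_{p}^{c}(\{u^{j}_{t}\}_{t}, \{u^{k}_{t}\}_{t})$. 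Hence for each fixed $s > 0$, the sequence $\{u^{j}_{s}\}_{j}$ is $d_{p}$-Cauchy in $\E^{p}(X,\te)$. Since $(\E^{p}(X,\te), d_{p})$ is complete, I can define
\[
u^{\infty}_{s} := \lim_{j \to \infty} u^{j}_{s} \in \E^{p}(X,\te), \qquad s > 0,
\]
with $u^{\infty}_{0} := V_{\te}$.

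Next I would verify that $[0,\infty) \ni s \mapsto u^{\infty}_{s}$ is a finite $p$-energy geodesic ray emanating from $V_{\te}$. Fix an interval $[0, T]$. For each $j$, the path $[0,T] \ni s \mapsto u^{j}_{s}$ is the finite energy weak geodesic joining $V_{\te}$ and $u^{j}_{T}$. Since $d_{p}(u^{j}_{T}, u^{\infty}_{T}) \to 0$, Lemma~\ref{lem: endpoint stability} (endpoint stability) ensures that the weak geodesics $[0,T] \ni s \mapsto u^{j}_{s}$ converge in $d_{p}$ to the weak geodesic joining $V_{\te}$ and $u^{\infty}_{T}$. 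By uniqueness of $d_{p}$-limits this limit geodesic equals $s \mapsto u^{\infty}_{s}$ on $[0,T]$. Since $T$ is arbitrary and the geodesic property is local (restriction to subintervals), $\{u^{\infty}_{s}\}_{s}$ is a bona fide geodesic ray in $\E^{p}(X,\te)$ starting from $V_{\te}$, i.e. $\{u^{\infty}_{s}\}_{s} \in \cR^{p}_{\te}$.

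Finally I need to verify that $d_{p}^{c}(\{u^{j}_{t}\}_{t}, \{u^{\infty}_{t}\}_{t}) \to 0$ as $j \to \infty$. Fix $\ee > 0$ and choose $N$ so that $d_{p}^{c}(\{u^{j}_{t}\}_{t}, \{u^{k}_{t}\}_{t}) < \ee$ whenever $j,k \geq N$. By the inequality from the first step, $d_{p}(u^{j}_{s}, u^{k}_{s}) \leq s \ee$ for all $s > 0$ and all $j, k \geq N$. Letting $k \to \infty$ and using that $d_{p}$ is continuous under $d_{p}$-convergence (triangle inequality), we obtain $d_{p}(u^{j}_{s}, u^{\infty}_{s}) \leq s \ee$ for all $s > 0$ and $j \geq N$. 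Dividing by $s$ and taking $s \to \infty$ gives $d_{p}^{c}(\{u^{j}_{t}\}_{t}, \{u^{\infty}_{t}\}_{t}) \leq \ee$ for $j \geq N$, which is the desired convergence.

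The only step that requires some care is the construction of the limit ray as an actual geodesic ray (not merely a pointwise limit) — this is handled cleanly by endpoint stability from Lemma~\ref{lem: endpoint stability}; the rest is essentially a routine unwinding of Buseman convexity combined with completeness of $(\E^{p}(X,\te), d_{p})$.
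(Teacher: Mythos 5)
Your proposal is correct and follows essentially the same route as the paper's proof: the bound $d_{p}(u^{j}_{s},u^{k}_{s})\leq s\,d_{p}^{c}(\{u^{j}_{t}\}_{t},\{u^{k}_{t}\}_{t})$ coming from Buseman convexity, completeness of $(\E^{p}(X,\te),d_{p})$ at each fixed time, endpoint stability (Lemma~\ref{lem: endpoint stability}) to identify the limit as a geodesic ray, and the same limiting argument for $d_{p}^{c}$-convergence. Your write-up is in fact somewhat more explicit than the paper's about why the pointwise limit assembles into a genuine ray, but there is no substantive difference.
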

\begin{proof}
\sloppy
    Let $\{u_{t}^{j}\}_{t} \in \cR^{p}_{\te}$ be a Cauchy sequence of finite energy geodesic rays. This means that $d_{p}^{c}(\{u_{t}^{j}\}_{t}, \{u_{t}^{k}\}_{t}) \to 0$ as $j,k \to \infty$. For a fixed $t \geq 0$, we know that $d_{p}(u_{t}^{j}, u_{t}^{k}) \leq td_{p}^{c}(\{u_{t}^{j}\}_{t},\{u_{t}^{k}\}_{t})$. Thus $u_{t}^{j}$ is a Cauchy sequence in $(\E^{p}(X,\te), d_{p})$ with limit point $u_{t} \in \E^{p}(X,\te)$. By the endpoint stability Lemma~\ref{lem: endpoint stability}, $u_{t}$ is a finite energy geodesic ray. Now we observe that 
    \begin{align*}
        d_{p}^{c}(\{u_{t}^{j}\}_{t}, \{u_{t}\}_{t}) &= \lim_{t \to \infty}\frac{d_{p}(u_{t}^{j}, u_{t})}{t} \\
        &= \lim_{t \to \infty} \lim_{k \to \infty}\frac{d_{p}(u_{t}^{j}, u_{t}^{k})}{t} \\
        &\leq d_{p}^{c}(\{u_{t}^{j}\}_{t} , \{u_{t}^{k}\}_{t})
    \end{align*}
    which can be arbitrarily small. Therefore, $d_{p}^{c}(\{u_{t}^{j}\}_{t}, \{u_{t}\}_{t}) \to 0$ as $j \to \infty$. 
\end{proof}

First, we will show that for $p > 1$, $(\cR^{p}_{\te}, d_{p}^{c})$ is a complete geodesic metric space. We have already seen that it is a complete metric space. We just need to show that there are geodesics in this space. 

\begin{thm} \label{thm: geodesics chords for p> 1}
    For $p > 1$, $(\cR^{p}_{\te}, d_{p}^{c})$ is a complete geodesic metric space. 
\end{thm}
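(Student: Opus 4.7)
The plan is to establish the existence of midpoints in $(\cR^p_\te, d_p^c)$: combined with the completeness already shown, this gives the geodesic structure in the usual way. Fix rays $\{u_t^0\}_t, \{u_t^1\}_t \in \cR^p_\te$, write $D_s := d_p(u_s^0, u_s^1)$ and $D_\infty := d_p^c(\{u_t^0\}_t, \{u_t^1\}_t) = \lim_{s \to \infty} D_s/s$, and fix $\lambda \in [0,1]$. For each $s > 0$, let $[0,1] \ni \mu \mapsto w_\mu^s \in \E^p(X,\te)$ be the weak geodesic joining $u_s^0$ and $u_s^1$, and let $[0, s] \ni t \mapsto u_t^{\lambda, s}$ be the weak geodesic joining $V_\te$ and $w_\lambda^s$. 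I aim to show that for each $t \geq 0$, the sequence $u_t^{\lambda, s}$ is $d_p$-Cauchy as $s \to \infty$; setting $u_t^\lambda := \lim_{s \to \infty} u_t^{\lambda, s}$, I then verify that $\{u_t^\lambda\}_t \in \cR^p_\te$ has the required chord distances.

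The Cauchy step is the heart of the argument. For $t \leq s_1 \leq s_2$, the restriction of the geodesic $u_\cdot^{\lambda, s_2}$ to $[0, s_1]$ is the weak geodesic from $V_\te$ to $u_{s_1}^{\lambda, s_2}$; applying Buseman convexity (Theorem~\ref{thm: introduction buseman convexity}) to this geodesic and to $u_\cdot^{\lambda, s_1}$ (a geodesic from $V_\te$ to $w_\lambda^{s_1}$) yields
\[
d_p(u_t^{\lambda, s_1}, u_t^{\lambda, s_2}) \leq \frac{t}{s_1}\, d_p(u_{s_1}^{\lambda, s_2}, w_\lambda^{s_1}).
\]
Two further applications of Buseman convexity on $[0, s_2]$, using that $u_\cdot^0$ and $u_\cdot^1$ restrict to geodesics from $V_\te$ to $u_{s_2}^0$ and $u_{s_2}^1$, give
\[
d_p(u_{s_1}^{\lambda, s_2}, u_{s_1}^0) \leq \frac{s_1}{s_2}\, \lambda D_{s_2} \quad\text{and}\quad d_p(u_{s_1}^{\lambda, s_2}, u_{s_1}^1) \leq \frac{s_1}{s_2}(1-\lambda) D_{s_2}.
\]
Since $D_s/s$ increases to $D_\infty$, dividing these two upper bounds by $D_{s_1}$ produces ratios that can be made arbitrarily close to $\lambda$ and $1-\lambda$ respectively, uniformly in $s_2 \geq s_1$, by taking $s_1$ large. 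The uniform convexity estimate of Corollary~\ref{cor: uniform convexity} then yields $d_p(u_{s_1}^{\lambda, s_2}, w_\lambda^{s_1}) = o(D_{s_1})$ uniformly in $s_2$, hence $d_p(u_t^{\lambda, s_1}, u_t^{\lambda, s_2}) \to 0$ as $s_1, s_2 \to \infty$ for fixed $t$.

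Once the pointwise $d_p$-limit $u_t^\lambda$ exists, Lemma~\ref{lem: endpoint stability} applied to each restriction $[t_1, t_2] \ni t \mapsto u_t^{\lambda, s}$ shows that $[t_1, t_2] \ni t \mapsto u_t^\lambda$ is the weak geodesic joining $u_{t_1}^\lambda$ and $u_{t_2}^\lambda$; completeness places each $u_t^\lambda$ in $\E^p(X,\te)$. A uniform Buseman bound $d_p(u_t^{\lambda, s}, V_\te) \leq t\bigl[(1-\lambda) d_p(V_\te, u_1^0) + \lambda d_p(V_\te, u_1^1)\bigr]$ passed to the limit and combined with \cite[Lemma 4.12]{guptacompletegeodescimetricinbigclasses} shows $u_t^\lambda \to V_\te$ in $L^1$ as $t \to 0$, so $\{u_t^\lambda\}_t \in \cR^p_\te$. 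A final Buseman estimate on the geodesic segments from $V_\te$ to $u_s^0$ and from $V_\te$ to $w_\lambda^s$ gives $d_p(u_t^{\lambda, s}, u_t^0) \leq (t/s) \lambda D_s$; sending $s \to \infty$ and then $t \to \infty$ yields $d_p^c(\{u_t^0\}_t, \{u_t^\lambda\}_t) \leq \lambda D_\infty$, and the symmetric estimate gives $d_p^c(\{u_t^\lambda\}_t, \{u_t^1\}_t) \leq (1-\lambda) D_\infty$. The triangle inequality forces both inequalities to be equalities, so taking $\lambda = 1/2$ furnishes a midpoint of the two rays, and completeness of $(\cR^p_\te, d_p^c)$ upgrades the existence of midpoints into that of a full geodesic.

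The main obstacle is the Cauchy step: because $u_{s_1}^{\lambda, s_2}$ bears no monotonicity relation to $w_\lambda^{s_1}$, monotone convergence techniques are unavailable. The uniform convexity of $(\E^p(X,\te), d_p)$ for $p > 1$, extracted via Corollary~\ref{cor: uniform convexity}, is precisely what is needed to squeeze $u_{s_1}^{\lambda, s_2}$ toward the exact $\lambda$-point $w_\lambda^{s_1}$ quantitatively as $s_1, s_2 \to \infty$, and this is the sole reason the current theorem is restricted to $p > 1$.
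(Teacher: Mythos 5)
Your proposal is correct and follows essentially the same route as the paper: the same two-stage construction (geodesic across at time $s$, then radially from $V_{\te}$), the same Buseman-convexity estimates showing $u_{s_1}^{\lambda,s_2}$ is an approximate $\lambda$-point of $u_{s_1}^0,u_{s_1}^1$, and the same appeal to Corollary~\ref{cor: uniform convexity} to obtain the $d_p$-Cauchy property. The only cosmetic difference is that you finish by producing midpoints and invoking completeness, whereas the paper directly verifies $d_p^c(\{u_t^{\al}\}_t,\{u_t^{\bb}\}_t)=(\bb-\al)\,d_p^c(\{u_t^0\}_t,\{u_t^1\}_t)$ for all $\al\le\bb$.
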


\begin{proof}
    Let $\{u_{t}^{0}\}_{t}, \{u_{t}^{1}\}_{t} \in \cR^{p}_{\te}$ be two finite energy geodesic rays. We will construct finite energy geodesic rays $\{u_{t}^{\al}\}_{t} \in \cR^{p}_{\te}$ for $0 \leq \al \leq 1$, such that the path $[0,1] \ni \al \mapsto \{u_{t}^{\al}\}_{t} \in \cR^{p}_{\te}$ is a $d_{p}^{c}$-geodesic. 

    First, consider the finite energy geodesic $[0,1]\ni \al \mapsto v_{t}^{\al} \in \E^{p}(X,\te)$ joining $u_{t}^{0}$ and $u_{t}^{1}$. Let $[0,t] \ni l \mapsto w_{l}^{\al,t} \in \E^{p}(X,\te)$ be the finite energy geodesic joining $w_{0}^{\al, t} := V_{\te}$ and $w_{t}^{\al,t} = v_{t}^{\al}$. We claim that for a fixed $l \geq 0$, $w_{l}^{\al,t} \in \E^{p}(X,\te)$ is $d_{p}$-Cauchy as $t \to \infty$. Thus there exist $u_{l}^{\al} \in \E^{p}(X,\te)$ such that $\lim_{t \to \infty} d_{p}(w_{l}^{\al,t}, u_{l}^{\al}) = 0$. By the endpoint stability Lemma~\ref{lem: endpoint stability}, $\{u_{t}^{\al}\}_{t}$ is a finite energy geodesic ray. 

    Now we will prove the claim. Let $s \leq t$. By Theorem~\ref{thm: introduction buseman convexity}, 
    \begin{equation}\label{eq: approximating the radial geodesic 1}
                \frac{d_{p}(u_{s}^{0}, w_{s}^{\al,t})}{s} \leq \frac{d_{p}(u_{t}^{0}, v_{t}^{\al})}{t} = \al\frac{d_{p}(u_{t}^{0}, u_{t}^{1})}{t} \leq \al d_{p}^{c}(\{u_{t}^{0}\}_{t}, \{u_{t}^{1}\}_{t}) 
    \end{equation}

    Similarly, we have 
    \begin{equation}\label{eq: approximating the radial geodesic 2}
            \frac{d_{p}(u_{s}^{1}, w_{s}^{\al,t})}{s} \leq \frac{d_{p}(u_{t}^{1}, v_{t}^{\al})}{t} = (1-\al)\frac{d_{p}(u_{t}^{0}, u_{t}^{1})}{t} \leq (1-\al) d_{p}^{c}(\{u_{t}^{0}\}_{t}, \{u_{t}^{1}\}_{t}) 
    \end{equation}

    Since $\frac{d_{p}(u_{s}^{0}, u_{s}^{1})}{s} \nearrow d_{p}^{c}(\{u_{t}^{0}\}_{t}, \{u_{t}^{1}\}_{t})$. We can write $d_{p}^{c}(\{u_{t}^{0}\}_{t}, \{u_{t}^{1}\}_{t}) \leq (1+\ee(s))\frac{d_{p}(u_{s}^{0}, u_{s}^{1})}{s}$ where $\ee(s) \to 0$ as $s \to \infty$. Combining with Equations~\eqref{eq: approximating the radial geodesic 1} and~\eqref{eq: approximating the radial geodesic 2}, we get 
    \[
    d_{p}(u_{s}^{0}, w_{s}^{\al,t}) \leq (\al + \ee(s)) d_{p}(u_{s}^{0}, u_{s}^{1})
    \]
    and
    \[
    d_{p}(u_{s}^{1}, w_{s}^{\al, t}) \leq (1-\al +\ee(s)) d_{p}(u_{s}^{0}, u_{s}^{1}).
    \]
    By Corollary~\ref{cor: uniform convexity} of the uniform convexity, we can write 
    \[
        d_{p}(v_{s}^{\al}, w_{s}^{\al,t}) \leq C\ee(s)^{\frac{1}{r}}d_{p}(u_{s}^{0}, u_{s}^{1}).
    \]
    This is the only place where we need the condition that $p > 1$. For $l \leq s$, using Theorem~\ref{thm: introduction buseman convexity},
    \[
    \frac{d_{p}(w_{l}^{\al,s}, w_{l}^{\al,t})}{l} \leq \frac{d_{p}(v_{s}^{\al}, w_{s}^{\al,t})}{s} \leq C(\ee(s))^{\frac{1}{r}}d_{p}^{c}(\{u_{t}^{0}\}_{t}, \{u_{t}^{1}\}_{t}).
    \]
    As $\ee(s)$ can be made arbitrarily small, we get that $w_{l}^{\al,t}$ is a $d_{p}$-Cauchy sequence as $t \to \infty$. By \cite[Main Theorem]{guptacompletegeodescimetricinbigclasses}, the metric $d_{p}$ is complete, so there exists $u_{l}^{\al} \in \E^{p}(X,\te)$ such that $\lim_{t \to \infty}d_{p}(w_{l}^{\al, t}, u_{l}^{\al}) = 0$.  

    Now we will prove that the map $[0,1] \ni \al \mapsto \{u_{t}^{\al}\}_{t} \in \cR^{p}_{\te}$ is a $d_{p}^{c}$-geodesic. Taking the limit $t \to \infty$ in Equations~\eqref{eq: approximating the radial geodesic 1} and~\eqref{eq: approximating the radial geodesic 2}, we get that 
    \[
    \frac{d_{p}(u_{s}^{0}, u_{s}^{\al})}{s} \leq \al d_{p}^{c}(\{u_{t}^{0}\}_{t}, \{u_{t}^{1}\}_{t}).
    \]
    and 
    \[
    \frac{d_{p}(u_{s}^{1}, u_{s}^{\al})}{s} \leq (1-\al)d_{p}^{c}(\{u_{t}^{0}\}_{t}, \{u_{t}^{1}\}_{t}).
    \]
    Taking the limit $s \to \infty$, we get that 
    \[
    d_{p}^{c}(\{u_{t}^{0}\}_{t}, \{u_{t}^{\al}\}_{t}) \leq \al d_{p}^{c}(\{u_{t}^{0}\}_{t}, \{u_{t}^{1}\}_{t}) \qquad \text{ and } \qquad  d_{p}^{c}(\{u_{t}^{1}\}_{t}, \{u_{t}^{\al}\}_{t}) \leq (1-\al) d_{p}^{c}(\{u_{t}^{0}\}_{t}, \{u_{t}^{1}\}_{t})
    \]
    Combining with the triangle inequality, we get 
    \[
    d_{p}^{c}(\{u_{t}^{0}\}_{t}, \{u_{t}^{\al}\}_{t}) = \al d_{p}^{c}(\{u_{t}^{0}\}_{t}, \{u_{t}^{1}\}_{t}) \qquad \text{ and } \qquad  d_{p}^{c}(\{u_{t}^{1}\}_{t}, \{u_{t}^{\al}\}_{t}) = (1-\al) d_{p}^{c}(\{u_{t}^{0}\}_{t}, \{u_{t}^{1}\}_{t})
    \]

    For $0 \leq \al \leq \bb \leq 1$, the triangle inequality implies 
    \[
    d_{p}^{c}(\{u_{t}^{\al}\}_{t} , \{u_{t}^{\bb}\}_{t}) \geq d_{p}^{c}(\{u_{t}^{0}\}_{t}, \{u_{t}^{\bb}\}_{t}) - d_{p}^{c}(\{u_{t}^{0}\}_{t}, \{u_{t}^{\al}\}_{t}) = (\bb- \al)d_{p}^{c}(\{u_{t}^{0}\}_{t}, \{u_{t}^{1}\}_{t}).
    \]
    For the other side, we again use Theorem~\ref{thm: introduction buseman convexity}. We notice that 
    \[
    \frac{d_{p}(w_{l}^{\al,t}, w_{l}^{\bb,t})}{l} \leq \frac{d_{p}(v_{t}^{\al}, v_{t}^{\bb})}{t} = (\bb-\al)\frac{d_{p}(u_{t}^{0}, u_{t}^{1})}{t}
    \]
    Taking limit $t \to \infty$, we get
    \[
    \frac{d_{p}(u_{l}^{\al}, u_{l}^{\bb})}{l} \leq (\bb-\al)d_{p}^{c}(\{u_{t}^{0}\}_{t}, \{u_{t}^{1}\}_{t}).
    \]
    Now taking limit $l \to \infty$, we get 
    \[
    d_{p}^{c}(\{u_{t}^{\al}\}_{t}, \{u_{t}^{\bb}\}_{t}) \leq (\bb - \al)d_{p}^{c}(\{u_{t}^{0}\}_{t}, \{u_{t}^{1}\}_{t}).
    \]
    Thus 
    \[
    d_{p}^{c}(\{u_{t}^{\al}\}_{t}, \{u_{t}^{\bb}\}_{t}) = (\bb - \al)d_{p}^{c}(\{u_{t}^{0}\}_{t}, \{u_{t}^{1}\}_{t})
    \]
    proving that the map $[0,1] \ni \al \mapsto \{u_{t}^{\al}\}_{t} \in \cR^{p}_{\te}$ is a $d_{p}^{c}$-geodesic.
\end{proof}

Now we need to extend this proof to $p = 1$ by approximating it from $p > 1$. First, we will show that the $d_{p}^{c}$-geodesics depend only on the end-points, and not the value of $p$, just like in the metric space $(\E^{p}(X,\te), d_{p})$, where the metric geodesics are described by the weak geodesics whose construction does not depend on $p$. 

\begin{lem} \label{lem: Geodesic chords are the same}
    If $1 \leq p' \leq p$, then $\cR^{p}_{\te} \subset \cR^{p'}_{\te}$. Moreover, if $[0,1] \ni \al \mapsto \{u_{t}^{\al}\}_{t} \in \cR^{p}_{\te}$ is the $d_{p}^{c}$-geodesic joining $\{u_{t}^{0}\}_{t}$ and $\{u_{t}^{1}\}_{t}$, then $[0,1]\ni \al\mapsto \{u_{t}^{\al}\}_{t}$ is also the $d_{p'}^{c}$-geodesic. 
\end{lem}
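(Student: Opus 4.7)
The first claim should be immediate from the standard inclusion $\E^p(X,\te) \subset \E^{p'}(X,\te)$ valid for $1 \leq p' \leq p$: any $\te$-geodesic ray $\{u_t\}_t$ with $u_t \in \E^p(X,\te)$ for every $t$ automatically has $u_t \in \E^{p'}(X,\te)$ for every $t$, so $\cR^p_\te \subset \cR^{p'}_\te$.

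For the geodesic statement, the plan is to revisit the construction of the $d_p^c$-geodesic in the proof of Theorem~\ref{thm: geodesics chords for p> 1} and verify that it remains valid when $d_p$ is replaced by $d_{p'}$. Recall that $u_l^\al$ was produced as a $d_p$-limit of potentials $w_l^{\al,t}$, where $w_l^{\al,t}$ is the weak $\te$-geodesic on $[0,t]$ joining $V_\te$ and the time-$\al$ point $v_t^\al$ of the weak geodesic connecting $u_t^0$ and $u_t^1$. The key observation is that both $w_l^{\al,t}$ and $v_t^\al$ are defined purely from the $\te$-psh structure and do not depend on the exponent. Hence, by Lemma~\ref{lem: one side inequality for dp distances}, the same approximants also converge to the same limit $u_l^\al$ in $d_{p'}$.

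Next I would reprove the key Buseman estimate from Theorem~\ref{thm: geodesics chords for p> 1} with $d_{p'}$ in place of $d_p$. Weak geodesics remain metric geodesics with respect to $d_{p'}$ by Theorem~\ref{thm: geodesic speed and distance}, so the identity $d_{p'}(v_t^\al, v_t^\bb) = (\bb-\al) d_{p'}(u_t^0, u_t^1)$ still holds, and Buseman convexity of $d_{p'}$ (Theorem~\ref{thm: introduction buseman convexity}) applied to the geodesics $\{w_l^{\al,t}\}_l$ and $\{w_l^{\bb,t}\}_l$ (which share the starting point $V_\te$) yields, for $0 \leq \al \leq \bb \leq 1$,
\[
\frac{d_{p'}(w_l^{\al,t}, w_l^{\bb,t})}{l} \leq \frac{d_{p'}(v_t^\al, v_t^\bb)}{t} = (\bb-\al)\, \frac{d_{p'}(u_t^0, u_t^1)}{t}.
\]
Sending first $t \to \infty$ and then $l \to \infty$ would give $d_{p'}^c(\{u_t^\al\}_t, \{u_t^\bb\}_t) \leq (\bb-\al)\, d_{p'}^c(\{u_t^0\}_t, \{u_t^1\}_t)$, and combining with the triangle inequality along a partition of $[0,1]$ forces equality, establishing the $d_{p'}^c$-geodesic property.

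The only subtlety to watch is that the limits in the construction, originally taken in $d_p$, must be re-interpreted as limits in $d_{p'}$; this transfer is supplied for free by Lemma~\ref{lem: one side inequality for dp distances}. In particular, no uniform convexity with respect to $p'$ is invoked, which is exactly what will later allow the geodesic structure on $\cR^{1}_\te$ to be recovered from the $p>1$ case by exhaustion.
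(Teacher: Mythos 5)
Your proposal is correct and follows essentially the same route as the paper: observe that the weak geodesics $v_t^{\al}$ and $w_l^{\al,t}$ are exponent-independent, use Lemma~\ref{lem: one side inequality for dp distances} to transfer the $d_p$-limit $u_l^{\al}$ to a $d_{p'}$-limit, and then rerun the Buseman-convexity and triangle-inequality argument of Theorem~\ref{thm: geodesics chords for p> 1} with $d_{p'}$ in place of $d_p$ (the paper simply asserts this last step "goes through," whereas you spell it out). Your remark that uniform convexity is only needed to produce the limit, not to transfer it, is exactly the point the paper relies on.
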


\begin{proof}
    Recall from the proof of Theorem~\ref{thm: geodesics chords for p> 1} that the the geodesics $[0,1] \ni \al \mapsto v_{t}^{\al} \in \E^{p}(X,\te)$ joining $u_{t}^{0}$ and $u_{t}^{1}$ do not depend on the value of $p$. Similarly, the geodesics $[0,t] \ni l \mapsto w_{l}^{\al,t} \in \E^{p}(X,\te)$ joining $V_{\te}$ and $v_{t}^{\al}$ also do not depend on the value of $p$. 

    If we can show that the $d_{p}$ limit $u_{l}^{\al}$ of $w_{l}^{\al,t}$ as $t \to \infty$ does not depend on $p$ as well, we will be done. Recall from Lemma~\ref{lem: one side inequality for dp distances}, that $d_{p'}(\cdot, \cdot) \leq d_{p}(\cdot, \cdot) \vol(\te)^{\frac{1}{p'}- \frac{1}{p}}$ on $\E^{p}(X,\te)$.  Thus if $\lim_{t\to\infty}d_{p}(w_{l}^{\al,t}, u_{l}^{\al}) = 0$, then 
    \[
    \lim_{t \to \infty}d_{p'}(w_{l}^{\al,t}, u_{l}^{\al}) \leq \lim_{t \to \infty}d_{p}(w_{l}^{\al,t}, u_{l}^{\al})\vol(\te)^{\frac{1}{p'}-\frac{1}{p}} = 0.
    \]
    Knowing that the $d_{p'}$ limit of $w_{l}^{\al,t}$ as $t \to \infty$ is $u_{l}^{\al}$ irrespective of the value of $p'$, the proof of the fact that $[0,1] \ni \al \mapsto \{u_{t}^{\al}\}_{t} \in \cR^{p'}_{\te}$ is a $d_{p'}^{c}$-geodesic goes through as in Theorem~\ref{thm: geodesics chords for p> 1}. 
\end{proof}

Now we will establish the version of Theorem~\ref{thm: geodesics chords for p> 1} for $p =1$. Before that, we need the following 

\begin{lem}\label{lem: monotone sequence of rays converge}
    For $p\geq 1$, if $\{u_{t}^{j}\}_{t},  \{u_{t}\} \in \cR^{p}_{\te}$ are finite energy geodesic rays such that $u_{t}^{j} \searrow u_{t}$ then 
    \[
    \lim_{j \to \infty} d_{p}^{c}(\{u_{t}^{j}\}_{t}, \{u_{t}\}_{t}) = 0.
    \]
\end{lem}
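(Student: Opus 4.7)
The plan is to show that $\{u_t^j\}_j$ is Cauchy in $(\cR^p_\te, d_p^c)$ and then identify its limit with $\{u_t\}_t$ using pointwise $d_p$-convergence. The key idea is to promote the Lidskii-type inequality (Theorem~\ref{thm: Lidskii inequality for big classes}) from the level of $d_p$ to the level of $d_p^c$ by dividing by $t^p$ and sending $t\to\infty$.

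Set $\al_j := d_p^c(\{u_t^j\}_t, \{u_t\}_t)$. For $j \leq k$ we have $u_t^j \geq u_t^k \geq u_t$ pointwise, so Theorem~\ref{thm: Lidskii inequality for big classes} applied at each fixed $t > 0$ gives
\[
d_p^p(u_t^k, u_t) + d_p^p(u_t^j, u_t^k) \leq d_p^p(u_t^j, u_t).
\]
Dividing by $t^p$ and letting $t \to \infty$ (each quotient is nondecreasing in $t$, so the limits equal the chordal distances raised to the $p$-th power by the definition~\eqref{eq: equation for distance between rays}) yields the chordal Lidskii inequality
\[
\al_k^p + d_p^c(\{u_t^j\}_t, \{u_t^k\}_t)^p \leq \al_j^p.
\]
In particular, $\al_j$ is nonincreasing in $j$, so $\al_j \searrow \al$ for some $\al \geq 0$, and $d_p^c(\{u_t^j\}_t, \{u_t^k\}_t)^p \leq \al_j^p - \al_k^p \to 0$ as $j,k \to \infty$. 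Thus $\{u_t^j\}_j$ is Cauchy in $(\cR^p_\te, d_p^c)$.

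By the completeness established earlier in this section, there is $\{u_t^\infty\}_t \in \cR^p_\te$ with $d_p^c(\{u_t^j\}_t, \{u_t^\infty\}_t) \to 0$. I will then identify $\{u_t^\infty\}_t$ with $\{u_t\}_t$ pointwise: for any fixed $t > 0$, the monotonicity of $s \mapsto d_p(u_s^j, u_s^\infty)/s$ (both rays start at $V_\te$, Buseman convexity Theorem~\ref{thm: introduction buseman convexity}) gives $d_p(u_t^j, u_t^\infty) \leq t\, d_p^c(\{u_t^j\}_t, \{u_t^\infty\}_t)$, while Lemma~\ref{lem: continuity under decreasing sequences} gives $d_p(u_t^j, u_t) \to 0$. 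The triangle inequality then forces $u_t = u_t^\infty$ for every $t$, hence $\{u_t^\infty\}_t = \{u_t\}_t$ and $\al_j = d_p^c(\{u_t^j\}_t, \{u_t^\infty\}_t) \to 0$.

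The only nontrivial step is the chordal Lidskii inequality: one must check that dividing by $t^p$ and passing to the limit is legitimate, which uses that in the Buseman convex setting the normalized distance $d_p(u_t^j, u_t)/t$ is monotone in $t$ with limit equal to the chordal distance. Everything else is a formal Cauchy-plus-completeness argument combined with pointwise $d_p$-continuity along decreasing sequences.
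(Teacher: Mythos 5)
Your proof is correct, but it takes a genuinely different route from the paper's. The paper normalizes the rays via Lemma~\ref{lem: sup is affine in weak geodesics starting from V_{te}} so that $0 \geq u_{t}^{j} \geq u_{t}$, and then applies the Lidskii inequality \emph{once}, with this common upper bound as the top element, to get
\[
\frac{d_{p}^{p}(u_{t}^{j},u_{t})}{t^{p}} \;\leq\; \frac{d_{p}^{p}(u_{t},0)-d_{p}^{p}(u_{t}^{j},0)}{t^{p}} \;=\; d_{p}^{p}(u_{1},0)-d_{p}^{p}(u_{1}^{j},0),
\]
a bound independent of $t$ that tends to $0$ by Lemma~\ref{lem: continuity under decreasing sequences}; this yields the conclusion in one stroke and with an explicit rate. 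You instead apply Lidskii among the rays themselves, pass to the limit $t\to\infty$ to obtain the chordal inequality $\al_{k}^{p} + d_{p}^{c}(\{u_{t}^{j}\}_{t},\{u_{t}^{k}\}_{t})^{p} \leq \al_{j}^{p}$, conclude that the sequence is $d_{p}^{c}$-Cauchy, and then invoke completeness of $(\cR^{p}_{\te},d_{p}^{c})$ together with a pointwise identification of the limit ray. Each step checks out: the limits exist by the Buseman monotonicity of $t\mapsto d_{p}(\cdot_{t},\cdot_{t})/t$, the completeness lemma is proved earlier in the section and does not depend on the present statement (so there is no circularity), and the identification $u_{t}^{\infty}=u_{t}$ follows from Lemma~\ref{lem: continuity under decreasing sequences} and the triangle inequality. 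The trade-off is that your argument is longer and leans on strictly more machinery (completeness of the ray space and nondegeneracy of $d_{p}$), while avoiding the affine-sup normalization entirely; the chordal Lidskii inequality you derive along the way is a pleasant by-product that the paper's shorter proof does not produce.
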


\begin{proof}
    By Lemma~\ref{lem: sup is affine in weak geodesics starting from V_{te}}, the map $t \mapsto \sup_{X}u_{t}$ and $t \mapsto \sup_{X}u_{t}^{j}$ is linear. Therefore, by replacing $u_{t}, u_{t}^{j}$ by $u_{t} - Ct, u_{t}^{j} - Ct$, we can assume that $0 \geq  u_{t}^{j} \geq u_{t}$. 

    Applying the Lidskii-type inequality from Theorem~\ref{thm: Lidskii inequality for big classes}, we get that
    \[
    \frac{d_{p}^{p}(u_{t}^{j}, u_{t})}{t^{p}} \leq \frac{d_{p}^{p}(u_{t}, 0)- d_{p}^{p}(u_{t}^{j}, 0)}{t^{p}} = d_{p}^{p}(u_{1}, 0) - d_{p}^{p}(u_{1}^{j}, 0).
    \]
    Since $u_{1}^{j} \searrow u_{1}$, by Lemma~\ref{lem: continuity under decreasing sequences}, $\lim_{j \to \infty} d_{p}(u_{1}^{j}, 0 ) = d_{p}(u_{1}, 0)$. Thus 
    \[
    \lim_{j \to \infty} \frac{d_{p}(u_{t}^{j}, u_{t})}{t} = 0
    \]
    uniformly in $t$. Therefore, $\lim_{j \to \infty}d_{p}^{c}(\{u_{t}^{j}\}_{t}, \{u_{t}\}_{t}) = 0$.
\end{proof}

With the help of Lemma~\ref{lem: monotone sequence of rays converge} and Theorem~\ref{thm: approximting with minimal singularity type}, we can prove the existence of geodesics in $\cR^{1}_{\te}$. 

\begin{thm}
    For $p \geq 1$, the space $(\cR^{p}_{\te}, d_{p}^{c})$ is a complete geodesic metric. 
\end{thm}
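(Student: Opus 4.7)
The case $p > 1$ is already established in Theorem~\ref{thm: geodesics chords for p> 1}, and the completeness of $(\cR^{p}_{\te}, d_{p}^{c})$ for all $p \geq 1$ was proved earlier in this section. So the remaining task is the construction of a $d_{1}^{c}$-geodesic joining two arbitrary $\{u_{t}^{0}\}_{t}, \{u_{t}^{1}\}_{t} \in \cR^{1}_{\te}$. My plan is to approximate each endpoint ray from above by rays of minimal singularity (which live in every $\cR^{p}_{\te}$), build the chord in $\cR^{2}_{\te}$ using the $p = 2$ result, and then extract the $p = 1$ chord as a monotone limit via Lemma~\ref{lem: monotone sequence of rays converge}.

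By Theorem~\ref{thm: approximting with minimal singularity type}, pick minimal singularity geodesic rays $\{u_{t}^{0,j}\}_{t} \searrow \{u_{t}^{0}\}_{t}$ and $\{u_{t}^{1,j}\}_{t} \searrow \{u_{t}^{1}\}_{t}$. Since minimal singularity potentials lie in every $\E^{p}(X,\te)$, these approximants sit in $\cR^{2}_{\te}$. For each $j$, Theorem~\ref{thm: geodesics chords for p> 1} gives a $d_{2}^{c}$-geodesic $[0,1] \ni \al \mapsto \{u_{t}^{\al, j}\}_{t}$, which by Lemma~\ref{lem: Geodesic chords are the same} is simultaneously a $d_{1}^{c}$-geodesic. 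Inspecting the double-envelope construction in that proof, two applications of the comparison principle for weak geodesics show that the intermediate potentials $w_{l}^{\al,t,j}$ are decreasing in $j$ for each fixed $\al, l, t$; passing to the $d_{2}$-limit $t \to \infty$ preserves this inequality and yields $u_{l}^{\al, j+1} \leq u_{l}^{\al, j}$ for every $l$ and $\al$. Define $u_{t}^{\al} := \lim_{j \to \infty} u_{t}^{\al, j}$ pointwise.

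The crux is verifying $u_{t}^{\al} \in \E^{1}(X,\te)$ and that $\{u_{t}^{\al}\}_{t}$ is a finite $1$-energy geodesic ray from $V_{\te}$. For the first, I need $d_{1}$-boundedness of the decreasing sequence $u_{t}^{\al, j}$ so that Lemma~\ref{lem: decreasing bounded dp sequence has a limit in Ep} applies; I would obtain this by combining Buseman convexity (Theorem~\ref{thm: introduction buseman convexity}), which gives $d_{1}(u_{t}^{0, j}, u_{t}^{\al, j}) \leq t\al \, d_{1}^{c}(\{u_{s}^{0, j}\}_{s}, \{u_{s}^{1, j}\}_{s})$, with Lemma~\ref{lem: monotone sequence of rays converge} and the triangle inequality to bound $d_{1}^{c}(\{u_{s}^{0, j}\}_{s}, \{u_{s}^{1, j}\}_{s})$ uniformly in $j$, and with Lemma~\ref{lem: continuity under decreasing sequences} to control $d_{1}(V_{\te}, u_{t}^{0, j})$. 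Once $u_{t}^{\al} \in \E^{1}$ is known, the endpoint stability Lemma~\ref{lem: endpoint stability} applied to each truncation $[0, T]$ yields that $\{u_{t}^{\al}\}_{t}$ is a geodesic ray, with $L^{1}$-continuity at $t = 0$ handled by the argument already used in the proof of Theorem~\ref{thm: projection operator}.

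Finally, to confirm that $\al \mapsto \{u_{t}^{\al}\}_{t}$ is a $d_{1}^{c}$-geodesic, I would pass to the limit in the identity $d_{1}^{c}(\{u_{t}^{\al,j}\}_{t}, \{u_{t}^{\bb,j}\}_{t}) = |\bb - \al| \, d_{1}^{c}(\{u_{t}^{0,j}\}_{t}, \{u_{t}^{1,j}\}_{t})$ using Lemma~\ref{lem: monotone sequence of rays converge} and the triangle inequality on both sides. The main obstacle is the $d_{1}$-boundedness of $u_{t}^{\al, j}$: the uniform convexity argument (Corollary~\ref{cor: uniform convexity}) powering the $p > 1$ construction in Theorem~\ref{thm: geodesics chords for p> 1} is unavailable at $p = 1$, and the work-around is precisely the Lidskii-type inequality (Theorem~\ref{thm: introduction lidskii type inequality}) that sits behind Lemma~\ref{lem: monotone sequence of rays converge}, which is what makes the monotone-approximation strategy succeed in this endpoint case.
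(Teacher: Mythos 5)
Your proposal is correct, and it shares the paper's overall skeleton (reduce to $p=1$, approximate the endpoint rays from above by minimal-singularity rays via Theorem~\ref{thm: approximting with minimal singularity type}, invoke the $p>1$ chords of Theorem~\ref{thm: geodesics chords for p> 1} together with Lemma~\ref{lem: Geodesic chords are the same}, and take a decreasing limit in $j$), but the mechanism by which you close the argument is genuinely different. The paper re-enters the proof of Theorem~\ref{thm: geodesics chords for p> 1} and shows that the original double-envelope approximants $w_{l}^{\al,t}$ built directly from $u^{0},u^{1}$ still $d_{1}$-converge as $t\to\infty$: it controls $d_{1}(w_{l}^{\al,t,j},w_{l}^{\al,t})$ uniformly in $t$ using the identity $d_{1}(u,v)=I(u)-I(v)$ for $u\geq v$ and the linearity of $I$ along geodesics, and then concludes by a three-term triangle inequality; this has the advantage of showing that the $p=1$ chord is produced by the same intrinsic construction as for $p>1$, with the uniform-convexity step of Corollary~\ref{cor: uniform convexity} replaced by the energy computation. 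You instead stay entirely at the level of the space of rays: you define $\{u_{t}^{\al}\}_{t}$ as the monotone limit of the approximating chords, verify membership in $\E^{1}$ via $d_{1}$-boundedness and Lemma~\ref{lem: decreasing bounded dp sequence has a limit in Ep}, upgrade to a geodesic ray by Lemma~\ref{lem: endpoint stability} on truncations, and pass the geodesic identity $d_{1}^{c}(\{u_{t}^{\al,j}\}_{t},\{u_{t}^{\bb,j}\}_{t})=|\bb-\al|\,d_{1}^{c}(\{u_{t}^{0,j}\}_{t},\{u_{t}^{1,j}\}_{t})$ to the limit using Lemma~\ref{lem: monotone sequence of rays converge}; the Lidskii inequality (Theorem~\ref{thm: introduction lidskii type inequality}) enters through that lemma rather than through an explicit $I$-computation, which makes your argument slightly more self-contained but loses the identification of the limit ray with the $t\to\infty$ limit of the $w_{l}^{\al,t}$. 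Both routes are valid; the one point worth writing out carefully in yours is the double application of the comparison principle giving monotonicity of $w_{l}^{\al,t,j}$ in $j$ and the fact that this monotonicity survives the $d_{p}$-limit $t\to\infty$, since everything downstream depends on it.
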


\begin{proof}
    We have already proved the theorem for $p > 1$. We just need to prove it for $p = 1$ now. Let $\{u_{t}^{0}\}_{t}, \{u_{t}^{1}\}_{t} \in \cR^{1}_{\te}$ be two finite energy geodeisc rays. As in the proof of Theorem~\ref{thm: geodesics chords for p> 1}, we can construct geodesics $[0,1] \ni \al \mapsto v_{t}^{\al} \in \E^{1}(X,\te)$ joining $u_{t}^{0}$ and $u_{t}^{1}$. We can also construct geodesics $[0,t] \ni l \mapsto w_{l}^{\al,t} \in \E^{1}(X,\te)$ joining $V_{\te}$ and $v_{t}^{\al}$. We used $p > 1$ to construct $u_{l}^{\al}$ as the $d_{p}$ limit of $w_{l}^{\al,t}$ as $t \to \infty$. In the case $p = 1$, we will use approximation to construct $u_{l}^{\al}$. 
    
    Fix any $p > 1$. From Theorem~\ref{thm: approximting with minimal singularity type} we can construct geodesic rays $\{u_{t}^{0,j}\}_{t}, \{u_{t}^{1,j}\}_{t} \in \cR^{p}_{\te}$, such that $u_{t}^{0,j} \searrow u_{t}^{0}$ and $u_{t}^{1,j} \searrow u_{t}^{1}$ as $j \to \infty$ and for all $t \geq 0$. 

    Let $[0,1] \ni \al \mapsto \{u_{t}^{\al,j}\}_{t} \in \cR^{p}_{\te} \subset \cR^{1}_{\te}$ be the $d_{p}^{c}$-geodesic constructed in Theorem~\ref{thm: geodesics chords for p> 1}. By Lemma~\ref{lem: Geodesic chords are the same} it is also the $d_{1}^{c}$-geodesic. Recall that we constructed $u_{l}^{\al, j}$ as $d_{p}$-limit (hence $d_{1}$-limit) of $w_{l}^{\al, t, j}$ as $t \to \infty$. Since $w_{l}^{\al, t , j} \geq w_{l}^{\al, t}$, we have 
    \begin{align*}
        d_{1}(w_{l}^{\al,t,j}, w_{l}^{\al, t}) &= I(w_{l}^{\al, t, j}) - I(w_{l}^{\al, t}) \\
        &= \frac{l}{t}( I(v_{t}^{\al, j}) - I(v_{t}^{\al})) \\
        &= \frac{l}{t} ((1-\al)I(u_{t}^{0,j}) + I(u_{t}^{1,j}) - (1-\al)I(u_{t}^{0}) - \al I(u_{t}^{1}))\\
        &= l(1-\al)(I(u_{1}^{0,j}) - I(u_{1}^{0})) + l\al (I(u_{1}^{1,j}) - I(u_{1}^{1})).
    \end{align*}
    As $j \to \infty$, $u_{1}^{0,j} \searrow u_{1}^{0}$ and $u_{1}^{1,j} \searrow u_{1}^{1}$. Therefore $I(u_{1}^{0,j}) \to I(u_{1}^{0})$ and $I(u_{1}^{1,j}) \to I(u_{1}^{1})$. Therefore, $d_{1}(w_{l}^{\al,t,j}, w_{l}^{\al, t}) \to 0$ as $j \to \infty$ uniformly over $t$. 

    Now since $u_{l}^{\al, j}$ is a decreasing sequence and is bounded from below because $I(u_{l}^{\al, j})$ is bounded. Therefore, the limit $\lim_{j \to \infty} u_{l}^{\al, j} =: u_{l}^{\al} \in \E^{1}(X,\te)$. By the proof of Theorem~\ref{thm: geodesics chords for p> 1}, we will be done, if we can show that $\lim_{t \to \infty}d_{1}(w_{l}^{\al,t}, u_{l}^{\al}) = 0 $. To see this notice that 
    \[
    d_{1}(w_{l}^{\al,t}, u_{l}^{\al}) \leq d_{1}(w_{l}^{\al, t}, w_{l}^{\al, t, j}) + d_{1}(w_{l}^{\al, t, j}, u_{l}^{\al, j}) + d_{1}(u_{l}^{\al, j}, u_{l}^{\al}).
    \]
    We have seen above that the first term goes to 0 as $j \to \infty$ uniformly in $t$. The last term also goes to 0 as $ j \to \infty$. For a fixed $j$, the second term gets arbitrarily small for large $t$. Therefore, $\lim_{t \to \infty} d_{1}(w_{l}^{\al,t}, u_{l}^{\al}) \to 0$. 
\end{proof}

\section{The Ross-Witt Nystr\"om correspondence}\label{sec: Ross-Witt Nystrom correspondence}

The notion of test curves, introduced by Ross-Witt Nystr\"om in \cite{RossWittNystromanalytictestconfigurations} in the K\"ahler setting, and further studied by Darvas-Di Nezza-Lu \cite{darvas2021l1}, Darvas-Zhang \cite{darvas2023twisted}, Darvas-Xia \cite{darvasxiaclosureoftestconfigurations} and Darvas-Xia-Zhang \cite{darvas2023nonarchimedeanmetrics}, is a powerful tool to study geodesic rays in connection with $K$-stability.

\begin{mydef}\label{def: test curve} A map $\R \ni \tau \mapsto \psi_{\tau} \in \PSH(X,\te)$ is a test curve, denoted by $\{\psi_{\tau}\}_{\tau}$, if 
\begin{enumerate}
    \item The map $\R \ni \tau \mapsto \psi_{\tau}(x)$ is decreasing, usc, concave for all $x \in X$, and
    \item $\psi_{\tau} = -\infty$ for all $\tau$ big enough, and
    \item $\psi_{\tau} \nearrow V_{\te}$ a.e. as $\tau \to -\infty$.
\end{enumerate}

\end{mydef}
From condition (2) above, we can define
\[
\tau_{\psi}^{+} = \inf\{ \tau \in \R \,:\, \psi_{\tau} \equiv - \infty\}.
\]

Test curves are Legendre dual of sublinear subgeodesic rays starting at $V_{\te}$ \cite{darvas2023twisted}. Several properties of the geodesic rays can be detected by the corresponding test curves. Based on that we define several subclasses of test curves.

\begin{mydef}\label{def: properties of test curve} A test curve $\{\psi_{\tau}\}_{\tau}$ can have the following properties.
\begin{enumerate}
    \item It is \emph{maximal} if $P_{\te}[\psi_{\tau}](0) = \psi_{\tau}$ for all $\tau \in \R$.
    \item It has \emph{finite $p$-energy} for $p \geq 1$, if 
    \[
     \int_{-\infty}^{\tau^{+}_{\psi}}(-\tau + \tau^{+}_{\psi})^{p-1}\left(\int_{X}\te^{n}_{V_{\te}} - \int_{X}\te^{n}_{\psi_{\tau}} \right) d\tau < \infty
    \]
    \item It is \emph{bounded} if for all $\tau$ negative enough, $\psi_{\tau} = V_{\te} $. In this case, we define 
    \[
    \tau^{-}_{\psi} = \sup\{ \tau \in \R \,:\, \psi_{\tau} = V_{\te}\}.
    \]
\end{enumerate}
    
\end{mydef}

When $p = 1$, the notion of finite $p$-energy test curves was introduced by Darvas-Xia \cite{darvasxiaclosureoftestconfigurations} in the K\"ahler setting, and by Darvas-Zhang \cite{darvas2023twisted} in the big setting. 

The Ross-Witt Nystr\"om correspondence is the observation of Ross-Witt Nystr\"om \cite{RossWittNystromanalytictestconfigurations} that test curves and subgeodesic rays are dual to each other via Legendre transform. If $\{u_{t}\}_{t}$ is a sublinear sub geodesic ray starting at $V_{\te}$, then its Legendre transform is given by
\[
\hat{u}_{\tau}(x) = \inf_{t > 0} (u_{t}(x) - t\tau). 
\]

If $\{\psi_{\tau}\}_{\tau}$ is a test curve, then its inverse Legendre transform is given by 
\[
\check{\psi}_{t}(x) = \sup_{\tau \in \R} (\psi_{\tau}(x) - t\tau).
\]

In particular, we have 
\begin{thm}[{\cite[Theorem 3.7]{darvas2023twisted}}] The Legendre transform $\{\psi_{\tau}\}_{\tau} \mapsto \{\check{\psi}_{t}\}_{t}$ is a bijection with inverse $\{u_{t}\}_{t} \mapsto \{\hat{u}_{\tau}\}_{\tau}$ between
    \begin{enumerate}
        \item test curves and sublinear subgeodesic rays starting at $V_{\te}$; 
        \item maximal test curves and geodesic rays starting at $V_{\te}$; 
        \item bounded maximal test curves and geodesic rays with minimal singularity type. Moreover, in this case, $V_{\te} + \tau^{-}_{\psi}t \leq \check{\psi}_{t} \leq V_{\te} + \tau^{+}_{\psi}t $.
    \end{enumerate}
\end{thm}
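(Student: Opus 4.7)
The plan is to verify the three correspondences by Legendre duality, following the strategy of Ross--Witt Nystr\"om in the K\"ahler case and its extension to the big setting. The pointwise-in-$x$ Legendre transform is a classical involution between convex functions of $t$ on $[0,\infty)$ and decreasing, concave, upper semi-continuous functions of $\tau$ on $\R$, so the main work is showing that the $\te$-psh structure, sublinearity, maximality, and boundedness all translate correctly between the two sides.

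For part (1), starting from a sublinear subgeodesic ray $\{u_t\}_t$ with $u_0 = V_\te$, the function $(z,x) \mapsto u_{\text{Re}(z)}(x) - \text{Re}(z)\,\tau$ is $\pi^*\te$-psh on $S \times X$ for each fixed $\tau$, since the extra affine term $-t\tau$ is pluriharmonic in $z$. Kiselman's minimum principle then yields that $\hat u_\tau(x) = \inf_{t > 0}(u_t(x) - t\tau)$ is $\te$-psh in $x$. Decreasing, usc, and concave behaviour in $\tau$ are automatic from writing $\hat u_\tau$ as an infimum of affine functions of $\tau$. The sublinearity $u_t \leq V_\te + Ct$ forces $\hat u_\tau \equiv -\infty$ for $\tau > C$, so $\tau_{\hat u}^+ < \infty$; and $\hat u_\tau \nearrow V_\te$ as $\tau \to -\infty$ follows from $u_0 = V_\te$ by taking $t \to 0^+$ in the infimum. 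The reverse direction is symmetric: $\check\psi_t$ is $\pi^*\te$-psh on $S \times X$ by standard preservation under suprema of $\te$-psh families, starts at $V_\te$ thanks to condition (3) of a test curve, and is sublinear because $\tau^+_\psi < \infty$. Involutivity is then classical pointwise Legendre duality, and the global bijection statement follows.

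For part (2), maximality $P_\te[\psi_\tau](0) = \psi_\tau$ corresponds exactly to $\check\psi_t$ being a genuine geodesic rather than merely a subgeodesic. The key identity is that replacing a subgeodesic by its geodesic upper envelope translates through the Legendre transform into applying the envelope operator $P_\te[\,\cdot\,](0)$ slicewise to each $\hat u_\tau$; one can check this by a direct pointwise computation or by reduction to the K\"ahler case via the approximation scheme of Section~\ref{sec: preliminaries}. For part (3), boundedness ($\psi_\tau = V_\te$ for $\tau \leq \tau^-_\psi$ together with $\psi_\tau \equiv -\infty$ for $\tau > \tau^+_\psi$) plugged into the Legendre formula immediately yields the two-sided linear bound $V_\te + \tau^-_\psi t \leq \check\psi_t \leq V_\te + \tau^+_\psi t$, and in particular minimal singularity; conversely, a minimally singular geodesic ray is sandwiched between two $V_\te + \text{const}\cdot t$ bounds, and the Legendre transform of such a ray is constantly $V_\te$ on a negative half-line of $\tau$, i.e., bounded.

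The main technical obstacle is verifying that the envelope operation $P_\te[\,\cdot\,](0)$ commutes with the Legendre transform at the critical values $\tau = \tau^\pm_\psi$, and more generally controlling the usc regularization of $\hat u_\tau$ at values of $\tau$ where $\hat u_\tau$ transitions to $-\infty$ or up to $V_\te$. In the big setting this is handled by the envelope and decreasing-limit results recalled in Section~\ref{sec: preliminaries} together with the Demailly analytic-singularity approximation reducing the verification to the big-and-nef case, where the corresponding arguments from \cite{RossWittNystromanalytictestconfigurations} and the K\"ahler literature apply on the resolution $\tX$.
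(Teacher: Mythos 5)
This statement is quoted background: the paper cites it as \cite[Theorem 3.7]{darvas2023twisted} and gives no proof of its own, so there is no in-paper argument to compare against. Judged on its own terms, your outline follows the standard Ross--Witt Nystr\"om/Darvas--Zhang strategy (pointwise convex duality in $t$ versus $\tau$, Kiselman's minimum principle for $\te$-plurisubharmonicity of $\hat{u}_{\tau}$, and the translation of boundedness into the two-sided linear bounds in part (3)), and those pieces of part (1) and part (3) are essentially right, modulo care with the usc regularization in $\tau$ and with the a.e.\ convergence $\hat{u}_{\tau} \nearrow V_{\te}$ as $\tau \to -\infty$, which needs the convexity of $t \mapsto u_{t}(x)$ together with the $L^{1}$ convergence $u_{t} \to V_{\te}$, not merely ``taking $t \to 0^{+}$ in the infimum.''

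The genuine gap is in part (2), which is the actual content of the theorem. You assert that maximality of the test curve corresponds to the ray being a geodesic because the envelope operation ``translates through the Legendre transform into applying $P_{\te}[\,\cdot\,](0)$ slicewise,'' and that this ``can be checked by a direct pointwise computation or by reduction to the K\"ahler case.'' It cannot: a pointwise computation only shows that the inverse transform of a test curve is a sublinear subgeodesic, and the two implications of (2) each require real work. The direction ``geodesic ray $\Rightarrow$ maximal test curve'' needs the maximality/perturbation characterization of weak geodesics to show $P_{\te}[\hat{u}_{\tau}](0) = \hat{u}_{\tau}$ for every $\tau < \tau^{+}_{\hat{u}}$, including control at the critical value where the slices degenerate to $-\infty$; the converse ``maximal test curve $\Rightarrow$ geodesic'' is proved in the cited source by truncating to bounded maximal test curves, identifying the inverse transform of a bounded maximal test curve with the geodesic constructed by the envelope of Equation~\eqref{eq: def of geodesic}, and then passing to the decreasing limit. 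None of this is supplied by the approximation scheme of Section~\ref{sec: preliminaries}, which concerns the metric $d_{p}$ rather than the Legendre duality itself. As written, part (2) of your argument is a restatement of what must be proved rather than a proof of it.
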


Darvas-Xia \cite[Theorem 3.7]{darvasxiaclosureoftestconfigurations} in the K\"ahler setting, and Darvas-Zhang \cite[Theorem 3.9]{darvas2023twisted} in the big setting proved that the Ross-Witt Nystr\"om correspondence as described above is a bijection between maximal finite $1$-energy test curves and finite $1$-energy geodesic rays. In this section, we will generalize this statement to finite $p$-energy test curves and geodesic rays. 

For that, first, we need a lemma about the speed of the geodesic segments, which can be seen as a converse of \cite[Lemma 5.3]{guptacompletegeodescimetricinbigclasses}.

\begin{lem}\label{lem: Finite geodesic speed implies finite energy potential}
    If $u_{0} \in \cH_{\te} $, $u_{1} \in \PSH(X,\te)$, and $[0,1] \ni t \mapsto u_{t} \in \PSH(X,\te)$ is the weak geodesic joining $u_{0}$ and $u_{1}$ such that 
    \[
        \int_{X} |\dot{u}_{0}|^{p}\te^{n}_{u_{0}} < \infty,
    \]
    then $u_{1} \in \E^{p}(X,\te)$.
\end{lem}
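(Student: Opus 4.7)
The plan is to approximate $u_1$ from above by a decreasing sequence $u_1^j \in \cH_{\te}$ with $u_1^j \searrow u_1$, and transfer the finite speed assumption into a uniform $d_p$-bound on the approximating endpoints. Such an approximation exists by standard envelope constructions (regularize $\max(u_1, V_\te - j)$ in $C^{1,\bar 1}$ from above and apply $P_\te$). Let $u_t^j$ be the weak geodesic joining $u_0$ and $u_1^j$. By the envelope definition of weak geodesics, $u_1^j \searrow u_1$ forces $u_t^j \searrow u_t$ for every $t \in [0,1]$.

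Next, I claim $\dot u_0^j \searrow \dot u_0$ pointwise on $X$. Since $t \mapsto u_t^j(x)$ is convex with $u_0^j(x)=u_0(x)$, the slope $(u_t^j(x) - u_0(x))/t$ is increasing in $t$, so
\[
\dot u_0^j(x) = \inf_{t \in (0,1]} \frac{u_t^j(x) - u_0(x)}{t},
\]
and similarly for $\dot u_0$. Swapping the two infima and using $u_t^j \searrow u_t$ pointwise in $t$ gives $\inf_j \dot u_0^j = \dot u_0$, while monotonicity of $j \mapsto \dot u_0^j$ upgrades this to the pointwise decreasing convergence. Applying Theorem~\ref{thm: geodesic speed and distance} to $u_0, u_1^j \in \cH_\te$,
\[
d_p^p(u_0, u_1^j) = \int_X |\dot u_0^j|^p \, \te^n_{u_0}.
\]

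The main analytic step is a dominated convergence argument. From $\dot u_0 \leq \dot u_0^j \leq \dot u_0^1$, the fact that $\dot u_0^1$ is essentially bounded above by $\sup_X(u_1^1 - u_0) < \infty$ (by convexity of $t \mapsto u_t^1$ at $t=1$, with both endpoints in $\cH_\te$ hence comparable), and the hypothesis $(\dot u_0)^- \leq |\dot u_0| \in L^p(\te^n_{u_0})$, we obtain a uniform $L^1(\te^n_{u_0})$ dominator for $|\dot u_0^j|^p$. Dominated convergence yields
\[
\lim_{j \to \infty} d_p^p(u_0, u_1^j) = \int_X |\dot u_0|^p \, \te^n_{u_0} < \infty.
\]
Thus $(u_1^j)$ is a $d_p$-bounded decreasing sequence in $\E^p(X,\te)$, and Lemma~\ref{lem: decreasing bounded dp sequence has a limit in Ep} gives $u_1 = \lim u_1^j \in \E^p(X,\te)$. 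The main obstacle is establishing the pointwise convergence $\dot u_0^j \searrow \dot u_0$ together with producing the uniform dominator; the rest is a direct application of the tools assembled in the preliminaries.
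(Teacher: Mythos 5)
Your proposal is correct and follows essentially the same route as the paper: approximate $u_{1}$ from above by $u_{1}^{j}\in\cH_{\te}$, use Theorem~\ref{thm: geodesic speed and distance} together with the monotonicity $\dot{u}_{0}\leq\dot{u}_{0}^{j}$ to bound $d_{p}(u_{0},u_{1}^{j})$ uniformly, and conclude via Lemma~\ref{lem: decreasing bounded dp sequence has a limit in Ep}. The only difference is cosmetic: the paper first translates $u_{1}$ so that $u_{0}\geq u_{1}+1$, making all velocities nonpositive and the domination $|\dot{u}_{0}^{j}|\leq|\dot{u}_{0}|$ immediate, whereas you keep the general configuration and instead invoke dominated convergence with the two-sided bound $\dot{u}_{0}\leq\dot{u}_{0}^{j}\leq\dot{u}_{0}^{1}$ — both are valid.
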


\begin{proof}
    First, assume that $u_{0} \geq u_{1} + 1$, so that we can find $u_{1}^{j} \in \cH_{\te}$ such that $u_{1}^{j} \searrow u_{1}$ and $u_{0} \geq u_{1}^{j}$. Let $[0,1] \ni t \mapsto u_{t}^{j} \in \E^{p}(X,\te)$ be the weak geodesic joining $u_{0}$ and $u_{1}^{j}$. The geodesics are decreasing, therefore $\dot{u}_{0}, \dot{u}_{0}^{j} \leq 0$.  By Theorem~\ref{thm: geodesic speed and distance}, 
    \[
    d_{p}^{p}(u_{0}, u_{1}^{j}) = \int_{X} |\dot{u}_{0}^{j}|^{p}\te^{n}_{u_{0}} = \int_{X} (-\dot{u}_{0}^{j})^{p}\te^{n}_{u_{0}}.
    \]
    Since $u_{1}^{j} \geq u_{1}$, the weak geodesics satisfy $u_{t}^{j} \geq u_{t}$. Therefore,
    \[
    \dot{u}_{0}^{j} = \lim_{t \to 0} \frac{u_{t}^{j} - u_{0}}{t} \geq \lim_{t \to 0} \frac{u_{t} - u_{0}}{t} = \dot{u}_{0}.
    \]
    Hence, $(-\dot{u}_{0}^{j})^{p} \leq (-\dot{u}_{0})^{p}$. Now, 
    \[
    d_{p}^{p}(u_{0}, u_{1}^{j}) =  \int_{X}(-\dot{u}_{0}^{j})^{p}\te^{n}_{u_{0}} \leq \int_{X}|\dot{u}_{0}|^{p}\te^{n}_{u_{0}} < \infty.
    \]
    Therefore, $u_{1}^{j}$ is a $d_{p}$-bounded decreasing sequence such that $u_{1}^{j} \searrow u_{1}$. Therefore by Lemma~\ref{lem: decreasing bounded dp sequence has a limit in Ep} $u_{1} \in \E^{p}(X,\te)$.

    Now we drop the assumption that $u_{0} \geq u_{1} + 1$. Let $C > 0$ such that $u_{1} - C + 1 \leq u_{0}$. Let $w_{1} = u_{1} - C $, so that $u_{0} \geq w_{1} + 1$. If $w_{t}$ is the geodesic joining $u_{0}$ and $w_{1}$, then $w_{t} = u_{t} - Ct$. Therefore, $\dot{w}_{0} = \dot{u}_{0} - C$. Hence, 
    \[
    \int_{X}|\dot{w}_{0}|^{p}\te^{n}_{u_{0}} = \int_{X}|\dot{u}_{0} - C|^{p}\te^{n}_{u_{0}} \leq 2^{p-1}\left( \int_{X}|\dot{u}_{0}|^{p}\te^{n}_{u_{0}} + C^{p}\vol(\te)\right) < \infty.
    \]
    By the argument above, we find that $w_{1} \in \E^{p}(X,\te)$, therefore, $w_{1} + C = u_{1} \in \E^{p}(X,\te)$ as well. 
\end{proof}

This Lemma allows us to prove

\begin{thm}\label{thm: The Ross-Witt Nystrom correspondence}
    For $p \geq 1$, the Legendre Transform $\{u_{t}\}_{t} \mapsto \{\hat{u}_{\tau}\}_{\tau}$ is the bijective map between finite $p$-energy geodesic rays in $\cR^{p}_{\te}$ and maximal finite $p$-energy test curves. 
\end{thm}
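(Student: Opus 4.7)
The bijection between maximal test curves and geodesic rays starting at $V_{\te}$ is already supplied by the cited Darvas-Zhang theorem, so the entire content is to match the finite $p$-energy conditions on the two sides. My plan is to prove, for a minimal singularity ray $\{u_t\}$ with bounded maximal test curve $\{\hat u_\tau\}$, the exact identity
\begin{equation}\label{eq:key}
p\int_{-\infty}^{\tau_{\hat u}^+}(\tau_{\hat u}^+-\tau)^{p-1}\left(\vol(\te)-\int_X\te_{\hat u_\tau}^n\right)d\tau = \int_X(\tau_{\hat u}^+-\dot u_0)^p\,\te_{V_\te}^n,
\end{equation}
and then to pass from minimal to general finite-energy singularities in both directions via the monotone approximation of Theorem~\ref{thm: approximting with minimal singularity type} combined with Lemma~\ref{lem: Finite geodesic speed implies finite energy potential} and Lemma~\ref{lem: decreasing bounded dp sequence has a limit in Ep}.

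For identity \eqref{eq:key}, I would first establish the distributional identity
\[
\vol(\te)-\int_X\te_{\hat u_\tau}^n = \te_{V_\te}^n(\{\dot u_0<\tau\}), \qquad \tau\in\R,
\]
which is the pluripotential analogue of the Duistermaat--Heckman computation. The proof uses the pointwise description of the Legendre transform: $t\mapsto u_t(x)$ is convex with right-derivative $\dot u_0(x)$ at $0$, so $\hat u_\tau(x)=V_\te(x)$ precisely when $\dot u_0(x)\geq\tau$, and one concludes by invoking Lemma~\ref{lem: measure on contact set} applied to the model potential $\hat u_\tau=P_\te[\hat u_\tau](0)$ and $f=0$, together with $\te_{V_\te}^n=\mathbf{1}_{\{V_\te=0\}}\te^n_0$. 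A layer cake substitution $s=\tau_{\hat u}^+-\tau$ then yields \eqref{eq:key}.

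For the forward direction, given $\{u_t\}_t\in\cR_\te^p$, I take the minimal singularity approximants $u_t^j\searrow u_t$ from Theorem~\ref{thm: approximting with minimal singularity type}. The Legendre duality is order-reversing on test curves: $\hat u_\tau^j\searrow\hat u_\tau$, and by Lemma~\ref{lem: sup is affine in weak geodesics starting from V_{te}}, $\tau_{\hat u^j}^+=\sup_X u_1^j\searrow\sup_X u_1=\tau_{\hat u}^+$. Applying \eqref{eq:key} to each $u^j$, the right-hand side is bounded above (using $2^{p-1}$ splitting) by $C\,d_p^p(V_\te,u_1^j)+C(\tau_{\hat u^j}^+)^p\vol(\te)$, which stays finite because Lemma~\ref{lem: continuity under decreasing sequences} gives $d_p(V_\te,u_1^j)\to d_p(V_\te,u_1)<\infty$. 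Monotone convergence on the left-hand side of \eqref{eq:key} (the integrand is pointwise monotone in $j$ on the bounded interval of integration) then gives finite $p$-energy of $\{\hat u_\tau\}_\tau$. For the converse, given a maximal finite $p$-energy test curve $\{\psi_\tau\}_\tau$, I truncate to $\psi_\tau^k:=\max(\psi_\tau,V_\te+k(\tau-\tau_\psi^+))$ or a similar bounded maximal approximation with $\psi_\tau^k\searrow\psi_\tau$; the inverse Legendre transforms are minimal singularity rays $\check\psi^k_t$ whose $d_p$-speed from $V_\te$ is bounded by the $p$-energy of $\psi$ via \eqref{eq:key}. Then Lemma~\ref{lem: Finite geodesic speed implies finite energy potential} (or alternately Lemma~\ref{lem: decreasing bounded dp sequence has a limit in Ep}) shows that the monotone limit $\check\psi_t=\lim_k\check\psi^k_t$ lies in $\E^p(X,\te)$ for every $t$, giving $\{\check\psi_t\}_t\in\cR_\te^p$.

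The main obstacle is the measure identity $\int_X\te_{\hat u_\tau}^n=\te_{V_\te}^n(\{\dot u_0\geq\tau\})$. In the K\"ahler setting it is classical, but in the big setting one must handle the ample-locus issue and verify that $\dot u_0$ is well-defined $\te_{V_\te}^n$-a.e. (this follows from the $C^{1,\bar 1}$-regularity of the minimal-singularity geodesics, which one can lift to $\tX$ through the modification used in Section~\ref{sec: dp metric in analytic singularity}); a secondary concern is the convergence $\tau_{\hat u^j}^+\to\tau_{\hat u}^+$, which should indeed follow from Lemma~\ref{lem: sup is affine in weak geodesics starting from V_{te}} applied to each approximant.
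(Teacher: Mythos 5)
Your core identity is exactly the one the paper proves (its Equation~(17), a layer-cake computation combined with the pointwise description $\{\dot u_0\ge\tau\}=\{\hat u_\tau=V_\te\}$ and the contact-set formula of Lemma~\ref{lem: measure on contact set} applied to the model potential $\hat u_\tau$), and your use of Lemma~\ref{lem: Finite geodesic speed implies finite energy potential} to convert finite speed at $t=0$ into membership in $\E^p(X,\te)$ is also the paper's step. The difference is that you only establish the identity for minimal-singularity rays and then try to pass to general rays by the monotone approximation of Theorem~\ref{thm: approximting with minimal singularity type}. That extra layer is where the argument breaks.

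The gap is in your forward direction. Since $\hat u^j_\tau\ge\hat u_\tau$, Witt Nystr\"om monotonicity gives $\int_X\te^n_{\hat u^j_\tau}\ge\int_X\te^n_{\hat u_\tau}$, so the quantity $\vol(\te)-\int_X\te^n_{\hat u^j_\tau}$ is \emph{increasing} in $j$ and bounded above by $\vol(\te)-\int_X\te^n_{\hat u_\tau}$ --- i.e.\ the energy of each approximating test curve is bounded by the energy of the limit curve, not the other way around. A uniform bound on the approximate energies therefore says nothing about finiteness of the energy of $\{\hat u_\tau\}_\tau$ unless you also prove $\lim_j\int_X\te^n_{\hat u^j_\tau}=\int_X\te^n_{\hat u_\tau}$ for a.e.\ $\tau$. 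This is not ``monotone convergence'': continuity of non-pluripolar masses along decreasing sequences fails in general in big (even K\"ahler) classes, and for an arbitrary decreasing minimal-singularity approximation supplied by Theorem~\ref{thm: approximting with minimal singularity type} there is no reason for the masses not to jump in the limit. (Your converse direction does not suffer from this, because there the inequality goes the useful way: the truncated curves have energy bounded by that of $\psi$, so the speeds of $\check\psi^k$ are uniformly bounded and Lemma~\ref{lem: decreasing bounded dp sequence has a limit in Ep} applies.)

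The fix is to drop the approximation entirely, which is what the paper does: every ingredient of the identity is valid for an arbitrary geodesic ray emanating from $V_\te$. Convexity of $t\mapsto u_t(x)$ gives a pointwise right-derivative $\dot u_0(x)\in[-\infty,0]$ with no need for $C^{1,\bar 1}$ regularity or a lift to $\tX$; the infimum of the difference quotients is that derivative, which yields $\{\hat u_\tau=V_\te\}=\{\dot u_0\ge\tau\}$; and Lemma~\ref{lem: measure on contact set} applies to any $\te$-psh $\phi$. One then reads off the identity with both sides possibly infinite, identifies the right-hand side with $d_p^p(V_\te,u_1)$ when the ray is in $\cR^p_\te$, and uses Lemma~\ref{lem: Finite geodesic speed implies finite energy potential} (with $u_0=V_\te\in\cH_\te$) in the other direction. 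If you insist on approximating, you must do it only on the right-hand side, where $-\dot u_0^j\nearrow-\dot u_0$ pointwise does give genuine monotone convergence of $\int_X|\dot u_0^j|^p\te^n_{V_\te}=d_p^p(V_\te,u_1^j)\to d_p^p(V_\te,u_1)$ via Lemma~\ref{lem: continuity under decreasing sequences}.
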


\begin{proof}
    First, we assume that $\sup_{X}u_{t} = 0$ for all $t \geq 0$. In this case $\tau_{\hat{u}}^{+} = 0$. To see this we observe that $u_{t}$ decreases as $t \to \infty$. As $\sup_{X}u_{t} = 0$, by Hartog's Lemma (see \cite[Proposition 8.4]{guedj2017degenerate}) $\sup_{X}(\lim_{t\to \infty} u_{t}) = 0$. Therefore, when $\tau = 0$, 
    \[
    \hat{u}_{0} = \inf_{t \geq 0}u_{t} = \lim_{t \to \infty} u_{t} \not\equiv -\infty
    \]
    Whereas for $\tau > 0$, 
    \[
    \hat{u}_{\tau} = \inf_{t \geq 0}(u_{t} - t\tau) \equiv -\infty. 
    \]
    Thus $\tau^{+}_{\hat{u}} = 0$.  Again since $u_{t}$ is decreasing $\dot{u}_{0} \leq 0$. We will show that 
    \begin{equation}\label{eq: geodesic speed at 0 and energy of the test curve}
    \int_{X} |\dot{u}_{0}|^{p} \te^{n}_{u_{0}} = p\int_{-\infty}^{0} (-\tau)^{p-1}\left(\int_{X} \te^{n}_{V_{\te}} - \int_{X} \te^{n}_{\hat{u}_{\tau}}\right) d\tau.
    \end{equation}
    To see this we start with the left-hand side
    \begin{align*}
        \int_{X}|\dot{u}_{0}|^{p}\te^{n}_{u_{0}} &= p\int_{X}(-\dot{u}_{0})^{p}\te^{n}_{V_{\te}} \\
        &= p\int_{0}^{\infty} \tau^{p-1} \te^{n}_{V_{\te}} (\{\dot{u}_{0} < -\tau\}) d\tau. 
        \intertext{Changing the variable from $\tau$ to $-\tau$, we can continue}
        &= p\int_{-\infty}^{0}(-\tau)^{p-1}\te^{n}_{V_{\te}}(\{ \dot{u}_{0} < \tau\})d\tau.
        \intertext{Recall that $\hat{u}_{\tau} = \inf_{t \geq 0} (u_{t} - t\tau)$. Thus the set $\{\dot{u}_{0} \geq \tau\} = \{ \hat{u}_{\tau} = u_{0} = V_{\te}\}.$ Therefore,}
        &=p \int_{-\infty}^{0}(-\tau)^{p-1}\left( \vol(\te) - \te^{n}_{V_{\te}}(\{\hat{u}_{\tau} = V_{\te}\}) \right) d\tau. 
        \intertext{Since $\{\hat{u}_{\tau}\}$ is a maximal test curve, we know that $P_{\te}[\hat{u}_{\tau}] = \hat{u}_{\tau}$. Therefore, by Lemma~\ref{lem: measure on contact set}, $\te^{n}_{P_{\te}[\hat{u}_{\tau}]} = \mathds{1}_{\{P_{\te}[\hat{u}_{\tau}] = 0\}}\te^{n} $. Moreover, $\te^{n}_{V_{\te}} = \mathds{1}_{\{ V_{\te} = 0\}} \te^{n}$. As $P_{\te}[\hat{u}_{\tau}] = \hat{u}_{\tau}$, and $\{ \hat{u}_{\tau} =  0 \} \subset \{V_{\te} = 0\}$, we get that $\te^{n}_{\hat{u}_{\tau}} = \mathds{1}_{\{ \hat{u}_{\tau} = 0\}} \te^{n}_{V_{\te}}. $ Thus we can write, $\te^{n}_{V_{\te}} (\{\hat{u}_{\tau} = V_{\te}\}) = \int_{X}\te^{n}_{\hat{u}_{\tau}}$. So we get}
        &=p \int_{-\infty}^{0}(-\tau)^{p-1}\left(\int_{X} \te^{n}_{V_{\te}} - \int_{X} \te^{n}_{\hat{u}_{\tau}} \right) d\tau.
    \end{align*}

This proves \eqref{eq: geodesic speed at 0 and energy of the test curve}. 

If the test curve $\{\hat{u}_{\tau}\}_{\tau}$ has finite $p$-energy, then by \eqref{eq: geodesic speed at 0 and energy of the test curve}, $\int_{X}|\dot{u}_{0}|^{p}\te^{n}_{u_{0}} < \infty$. Since $[0,1] \ni s \mapsto w_{s}:=  u_{ts}$ is the geodesic joining $u_{0}$ and $u_{t}$. Thus, $\dot{w}_{0} = t\dot{u}_{0}$ and
\[
\int_{X} |\dot{w}_{0}|^{p}\te^{n}_{u_{0}} = t^{p}\int_{X} |\dot{u}_{0}|^{p}\te^{n}_{u_{0}} < \infty
\]
Therefore, by Lemma~\ref{lem: Finite geodesic speed implies finite energy potential},  $u_{t} \in \E^{p}(X,\te)$. Thus the geodesic ray $\{u_{t}\}_{t}$ is a finite $p$-energy geodesic ray. 

Similarly, if $\{u_{t}\}_{t}$ is a finite $p$-energy geodesic ray then $\int_{X} |\dot{u}_{0}|^{p}\te^{n}_{u_{0}} = d_{p}^{p}(u_{0}, u_{1}) < \infty$, thus by \eqref{eq: geodesic speed at 0 and energy of the test curve}, $\{\hat{u}_{\tau}\}_{\tau}$ is a finite $p$-energy test curve. 

More generally, by Lemma~\ref{lem: sup is affine in weak geodesics starting from V_{te}}, $\sup_{X}u_{t} = Ct$ for some $C$. Moreover, by the same argument as earlier $C = \tau_{\hat{u}}^{+}$.  If $w_{t} = u_{t} - Ct = u_{t} - \tau_{\hat{u}}^{+}t$, then $\{w_{t}\}_{t}$ is a geodesic whose Legendre dual $\{\hat{w}_{\tau}\}_{\tau}$ is given by $\hat{w}_{\tau} = \hat{u}_{\tau +C} = \hat{u}_{\tau + \tau_{\hat{u}}^{+}}$.

From \eqref{eq: geodesic speed at 0 and energy of the test curve}, we can say that 
\begin{align*}
\int_{X}|\dot{w}_{0}|^{p}\te^{n}_{V_{\te}} &= p\int_{-\infty}^{0} (-\tau)^{p-1}\left( \int_{X}\te^{n}_{V_{\te}} - \int_{X} \te^{n}_{\hat{w}_{\tau}}\right)d\tau \\
&=p \int_{-\infty}^{0}(-\tau)^{p-1}\left( \int_{X}\te^{n}_{V_{\te}} - \int_{X} \te^{n}_{\hat{u}_{\tau + \tau_{\hat{u}}^{+}}}\right)d\tau \\
&= p\int_{-\infty}^{\tau_{\hat{u}}^{+}} (-\tau + \tau_{\hat{u}}^{+})^{p-1} \left( \int_{X} \te^{n}_{V_{\te}} - \int_{X} \te^{n}_{\hat{u}_{\tau}}\right) d\tau
\end{align*}

As $\dot{w}_{0} = \dot{u}_{0} - \tau_{\hat{u}}^{+}$, we get that 
\[
\int_{X}|\dot{u}_{0}|^{p}\te^{n}_{V_{\te}} < \infty \iff \int_{X} |\dot{w}_{0}|^{p}\te^{n}_{V_{\te}} < \infty.
\]
Combining the two equations above, we get that $\{u_{t}\}_{t} \in \cR^{p}_{\te}$ is the finite $p$-energy geodesic ray iff the Legendre transform $\{\hat{u}_{\tau}\}_{\tau}$ is a maximal finite $p$-energy test curve. 
\end{proof}

\printbibliography

\end{document}